\documentclass[11pt,reqno]{amsart}
\usepackage[T1]{fontenc}
\usepackage[utf8]{inputenc}
\usepackage{mathpazo}
\usepackage{amsmath,amssymb,mathtools}
\usepackage{microtype}
\usepackage[margin=1in]{geometry}
\usepackage{graphicx}
\usepackage{enumitem}
\usepackage{xcolor}
\usepackage[colorlinks=true,linkcolor=blue!45!black,citecolor=blue!45!black,
  urlcolor=blue!45!black,bookmarksnumbered=true]{hyperref}
\hypersetup{pdftitle={Two degenerate intersections of Clifford tori in S2 x S2 and CP2},
  pdfauthor={Wenmin Gong},
  pdfsubject={An explicit smooth Hamiltonian construction}}
\numberwithin{equation}{section}
\newtheorem{theorem}{Theorem}[section]
\newtheorem{proposition}[theorem]{Proposition}
\newtheorem{lemma}[theorem]{Lemma}
\newtheorem{corollary}[theorem]{Corollary}
\theoremstyle{definition}

\theoremstyle{remark}
\newtheorem{remark}[theorem]{Remark}
\newcommand{\R}{\mathbb R}

\newcommand{\CP}{\mathbb{CP}}
\newcommand{\F}{\mathbb F}
\newcommand{\Ham}{\operatorname{Ham}}
\newcommand{\Crit}{\operatorname{Crit}}
\newcommand{\supp}{\operatorname{supp}}
\newcommand{\Rot}{\operatorname{Rot}}
\newcommand{\dd}{\,\mathrm d}
\newcommand{\id}{\operatorname{id}}
\newcommand{\eps}{\varepsilon}

\newcommand{\grad}{\nabla}
\setlist[enumerate]{label=(\roman*),leftmargin=2em}
\allowdisplaybreaks[2]

\title[Two intersections of Clifford tori]{Two degenerate intersections of Clifford tori\\
in $S^2\times S^2$ and $\mathbb{CP}^2$}
\author{Wenmin Gong}
\date{}

\address{Laboratory of Mathematics and Complex Systems (Ministry
of Education), School of Mathematical Sciences, Beijing Normal University,
	Beijing, 100875, China}

\email{ wmgong@bnu.edu.cn}

\begin{document}
\begin{abstract}
We construct smooth time-periodic Hamiltonians on the monotone manifolds
$S^2\times S^2$ and $\mathbb{CP}^2$ for which the Clifford torus meets its
Hamiltonian image in exactly two isolated degenerate points.  The product example disproves 
the degenerate Arnol’d–Givental conjecture.
\end{abstract}
\maketitle

\section{Introduction and main results}

The Lagrangian Arnol'd problem asks for lower bounds on
\(\#(L\cap\phi(L))\), where \(L\) is a closed Lagrangian submanifold and
\(\phi\) is Hamiltonian; see \cite{Arnold65,Arnold86} for the classical
formulation.  For transverse intersections, Floer theory provides
homological lower bounds under the classical hypotheses \cite{Floer88}.  For a
real Lagrangian, namely the fixed point set of an anti-symplectic involution,
the homological Arnol'd--Givental conjecture predicts the corresponding
Betti-number bound.  An early degenerate result is Givental's theorem that
every Hamiltonian image of \(\mathbb{RP}^n\subset\CP^n\) meets
\(\mathbb{RP}^n\) in at least \(n+1\) points, with no transversality
assumption; thus the full homological Arnol'd--Givental bound holds in this
model case \cite{Givental90}.  Chang--Jiang obtained a related cuplength
estimate for \((\CP^n,\mathbb{RP}^n)\), and Lu extended the degenerate
intersection bound to \((\CP^n(\mathbf q),\mathbb{RP}^n(\mathbf q))\) when
all weights are odd \cite{ChangJiang,LuWeighted}.  On the transverse side, Oh
established the Arnol'd--Givental inequality for real forms of compact
Hermitian symmetric spaces when the minimal Maslov number is at least two
\cite{OhAG92,OhAG95}, while Frauenfelder proved the conjecture for a class of
real Lagrangians in Marsden--Weinstein quotients using moment Floer homology
\cite{Frauenfelder}.
The transverse Arnol'd--Givental conjecture has recently been proved in full
generality \cite[Theorem~A]{BSWX}.  

The nontransverse problem is different.  A single isolated degenerate
intersection may split into several transverse intersections under an
arbitrarily small perturbation, so a transverse Floer complex does not by
itself bound the number of geometric intersections before perturbation.
The degenerate homological Arnol'd--Givental conjecture was proved in several settings: for zero
sections and exact cotangent-bundle problems \cite{Hofer85,LS}, under
relative asphericity or weak exactness \cite{Floer89,Hofer88}, and below
bubbling-energy thresholds \cite{Chekanov96,Chekanov98,Liu}.  In the real
setting, \cite{LuAG} proves a cup-length estimate under a rationality-index
bound on the Hofer norm.  More recently, Lagrangian
Lusternik--Schnirelmann methods and quantum cap products have produced uniform
degenerate lower bounds for several monotone examples \cite{Gong}.

We first consider the equal-area product.  Let
$S^2=\{\mathbf x\in\R^3:|\mathbf x|=1\}$ be the unit sphere.
We equip $S^2$ with the \emph{standard symplectic form} defined in
\cite[\S1.3, Example on $S^2$]{CdS}: for
$\mathbf x\in S^2$ and $u,v\in T_{\mathbf x}S^2=\mathbf x^\perp$,
\begin{equation}\label{eq:sphere-area-form}
 (\omega_{S^2})_{\mathbf x}(u,v)
 =\langle\mathbf x,u\times v\rangle.
\end{equation}
Here $\langle\cdot,\cdot\rangle$ (also denoted by $\cdot$ below) is the
Euclidean inner product on $\R^3$, and $\times$ is its standard cross
product in $\mathbb R^3$, with the usual orientation.  Thus, if
$\mathbf x=(x,y,z)$ and $\mathrm dx,\mathrm dy,\mathrm dz$ are the
restrictions of the ambient coordinate one-forms to $S^2$, the same
form is explicitly
\[
 \omega_{S^2}
 =x\,\mathrm dy\wedge\mathrm dz
  +y\,\mathrm dz\wedge\mathrm dx
  +z\,\mathrm dx\wedge\mathrm dy,
 \qquad \int_{S^2}\omega_{S^2}=4\pi.
\]

Let \(\pi_i:S^2\times S^2\to S^2\) be the two coordinate projections and set
\begin{equation}\label{eq:ambient}
 (M,\omega)=
 (S^2\times S^2,\pi_1^*\omega_{S^2}+\pi_2^*\omega_{S^2}).
\end{equation}
We write \(j=(j_x,j_y,j_z)\) and \(k=(k_x,k_y,k_z)\) for the two sphere
coordinates.  The product Clifford torus is
\begin{equation}\label{eq:L}
 L=\{(j,k)\in M:j_z=k_z=0\}\cong S^1\times S^1
\end{equation}
which is a Lagrangian submanifold in $S^2\times S^2$.  We use throughout the Hamiltonian convention
\begin{equation}\label{eq:ham-convention}
 \iota_{X_{H_t}}\omega=\mathrm dH_t,\qquad
 \frac{\mathrm d}{\mathrm dt}\varphi_H^t
   =X_{H_t}\circ\varphi_H^t,\qquad \varphi_H^0=\id.
\end{equation}
A Hamiltonian diffeomorphism is a map \(\varphi_H^1\); the group of all such
maps is denoted by \(\Ham(M,\omega)\).  Two Lagrangians meet transversely at
\(x\) when their tangent spaces span \(T_xM\); otherwise the intersection is
nontransverse, or degenerate.

\begin{theorem}\label{thm:main}
There is \(H\in C^\infty(\R/\mathbb Z\times M,\R)\) such that
\[
 \#\bigl(L\cap\varphi_H^1(L)\bigr)=2.
\]
Both intersections are isolated and nontransverse.  In fact, the two tangent
planes coincide at each intersection.
\end{theorem}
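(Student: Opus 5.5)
The plan is to build \(\varphi_H^1(L)\) directly as a Lagrangian \(L'\), exhibit an explicit Hamiltonian isotopy from \(L\) to \(L'\), and arrange that near each point of \(L\cap L'\) the Lagrangian \(L'\) is, in a Weinstein chart of \(L\), the graph of \(\mathrm dF\) with \(F\) a monkey saddle \(F=\mathrm{Re}(x+iy)^3=x^3-3xy^2\). Such an \(F\) has vanishing Hessian at the critical point, so the tangent planes coincide there, which is exactly the conclusion of Theorem~\ref{thm:main}.

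First I would check that this model is consistent with Floer theory, since this is what rules out naive attempts. If \(L'\) were globally a \(C^1\)-small graph of a closed \(1\)-form \(\alpha\) over \(L\), the isotopy would be Hamiltonian only if its flux \([\alpha]\in H^1(L;\R)\) vanished. Then \(\alpha=\mathrm dF\), and Lusternik--Schnirelmann on \(T^2\) would force at least \(3\) critical points. So \(L'\) must leave the Weinstein neighbourhood away from the intersection points. On the other hand, \(HF(L,L)\cong H_*(T^2;\mathbb Z_2)\) has rank \(4\), so the local Floer homologies at the intersection points must add up to rank at least \(4\). A monkey saddle has local Morse homology of rank \(2\), concentrated in one degree. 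Two monkey saddles therefore give exactly rank \(4\), whereas birth--death points like \(x^3+y^3\) contribute nothing. This tells me to aim for two monkey-saddle tangencies, presumably exchanged by a symmetry.

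Next, the construction. I would use the diagonal \(SO(3)\)-action and the coupling function \(\mu=\langle j,k\rangle\). Both the diagonal rotations (generated by \(j_\bullet+k_\bullet\)) and the flow of \(\mu\) (which rotates \(j\) and \(k\) about \(j+k\)) are explicit and autonomous. Composing a few such flows, each reparametrised by a cutoff in \(t\) to make the Hamiltonian smooth and \(1\)-periodic, produces an \(L'\) whose intersection with \(L\) reduces to a small system of trigonometric equations. A warm-up example shows why a deformation is needed: the diagonal quarter rotation about the \(x\)-axis gives \(L'=\{j_y=k_y=0\}\), which meets \(L\) in the four transverse points \((\pm e_x,\pm e_x)\). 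The aim is to deform this family so that the four points merge in pairs. I would impose a discrete symmetry, say the involution \((j,k)\mapsto(-j,-k)\) composed with a reflection preserving \(L\), that exchanges the two merged points. That reduces the verification to one point.

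The main obstacle is achieving a genuinely third-order contact, with Hessian zero, at the merged points. A generic collision of two transverse intersections only produces a fold (\(x^3\)-type), in which the tangent planes meet in a line and the local Floer homology vanishes. To get the monkey saddle I would add a local correction supported in a small ball around each merging point. In a Darboux chart adapted to \(L\), this correction is the Hamiltonian whose time-one map sends the local graph of \(\mathrm dG\) (with \(G\) the cubic-plus-quadratic germ produced by the global flows) to the graph of \(\mathrm d(x^3-3xy^2)\). This is possible because any two exact graphs are related by the flow of a compactly supported function \(p\)-independent near \(L\), namely \(H=(F-G)\) cut off. The remaining verification is that no new intersections appear. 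Inside the ball, the only critical point of \(x^3-3xy^2\) is the origin. Outside the ball, \(L\cap L'\) is unchanged and I would check it by the explicit trigonometric computation from the previous step, keeping a uniform lower bound on the distance between \(L\) and \(L'\) away from the two points. The final count is therefore exactly two isolated, nontransverse intersections with coinciding tangent planes.
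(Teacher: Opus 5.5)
There is a genuine gap in the local correction step. You let the global flows bring the two points of a pair together into one degenerate intersection with a cubic-plus-quadratic germ $G$. You then replace $G$ by $F=x^3-3xy^2$ by shearing to the graph of $\mathrm d(G+\chi(F-G))$, with a cutoff $\chi$ on a small ball. You check the region where $\chi=1$ and the region where $\chi=0$, but not the collar $\{0<\chi<1\}$, and there the step necessarily fails.

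The modified function equals $G$ near the boundary of the ball. Hence its critical points in the ball have the same total Poincar\'e--Hopf index as those of $G$, and indeed the same total local Morse homology. Any germ with an isolated critical point and nonzero Hessian is, by the splitting lemma, equivalent to $\pm x^{k+1}\pm y^2$ with $k\ge1$. Such a germ has index $0$ or $\pm1$ and local homology of rank $0$ or $1$, whereas the monkey saddle has index $-2$ and local homology of rank $2$. So additional critical points, that is, new intersections, are forced in the collar.

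Your own Floer-theoretic count points the same way. A configuration whose only intersections are two fold-type points has total local rank $0<4$, so the global flows cannot produce it. A compactly supported correction also cannot create local Floer homology that the cluster did not already carry.

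The missing idea is that the merger must happen before the points collide, on a region containing both of them. It works only for a cluster that already has total index $-2$: in a single exact graph chart, the two points of each pair must be nondegenerate saddles of the same critical value, joined by an embedded regular arc of their common level set. Even then, a naive cutoff interpolation gives no control on the collar. One needs the normal form $t(t^2+1-s^2)$ along the arc and a cutoff adapted to it, which is the content of Lemma~\ref{lem:merge}.

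The paper produces such pairs explicitly, in three steps.
\begin{enumerate}
\item The radial rotation $\Phi_\rho$ and the auxiliary rotation $\mathcal R$ of the reference torus yield four transverse intersections, two near each $P_\sigma$ (Proposition~\ref{prop:four}). Your coupling function $\langle j,k\rangle=r^2/2-1$ is of this radial type, but the paper relies on a specific angle profile $\theta_\rho$, close to $\pi/2$ outside an $O(\rho)$ transition region.
\item After the rescaling $p=\rho v$, the potential differences converge to $h(|v|)-\sigma v_1v_2/2$, whose two saddles lie on a regular level arc (Lemma~\ref{lem:arc}).
\item The merged function is transferred back by an exact, compactly supported graph shear, and its effect on the whole torus is checked (Proposition~\ref{prop:realization}).
\end{enumerate}
Your proposal leaves the global flows and the intersection count outside the correction balls unspecified, so this part of the argument also still has to be supplied.
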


Put \(r(x,y,z)=(x,y,-z)\).  Then \(r\times r\) is anti-symplectic for  $(M,\omega)$ and
\(\operatorname{Fix}(r\times r)=L\).  Thus \(L\) is real.  Here
\(\F_2=\mathbb Z/2\mathbb Z\) denotes the field with two elements.  The
transverse Arnol'd--Givental theorem gives
\begin{equation}\label{eq:AG}
 L\pitchfork\phi(L)\quad\Longrightarrow\quad
 \#(L\cap\phi(L))\geq
 \dim_{\F_2}H_*(L;\F_2)=4
\end{equation}
for every Hamiltonian diffeomorphism \(\phi\) \cite[Theorem~A]{BSWX}.
Accordingly, Theorem~\ref{thm:main} should be read as a counterexample to the
 degenerate homological Arnol'd--Givental conjecture~\cite[\S 11.3]{McS}.

The second main result concerns the standard Clifford torus
\begin{equation}\label{eq:CP2-Clifford-intro}
 T_{\mathrm{Cl}}=
 \{[z_0:z_1:z_2]\in\CP^2:|z_0|=|z_1|=|z_2|\}.
\end{equation}
We normalize the Fubini--Study form by
\(\int_{\CP^1}\omega_{\mathrm{FS}}=2\pi\).

\begin{theorem}\label{thm:cp2}
There is a smooth time-periodic Hamiltonian
\(H_{\CP^2}:\R/\mathbb Z\times\CP^2\to\R\) such that
\[
 \#\bigl(T_{\mathrm{Cl}}\cap
 \varphi_{H_{\CP^2}}^1(T_{\mathrm{Cl}})\bigr)=2.
\]
The two intersections are isolated, and the tangent planes of the two
Lagrangians coincide at each of them.
\end{theorem}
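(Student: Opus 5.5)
The plan is to deduce Theorem~\ref{thm:cp2} from Theorem~\ref{thm:main}, or at least from the same construction, by working in toric coordinates. The Clifford torus $T_{\mathrm{Cl}}$ is the fibre over the barycenter of the moment triangle
\[
 \mu([z_0:z_1:z_2])=\Bigl(\tfrac{|z_1|^2}{|z|^2},\tfrac{|z_2|^2}{|z|^2}\Bigr).
\]
Over the open triangle, the complement of the three coordinate lines is symplectomorphic to $\mathrm{int}(\Delta)\times T^2$ with the form $\mathrm d\mu_1\wedge\mathrm d\theta_1+\mathrm d\mu_2\wedge\mathrm d\theta_2$, suitably normalized. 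Likewise, the product Clifford torus $L\subset S^2\times S^2$ is the fibre over the center of the moment square, and its neighbourhood $\{|j_z|,|k_z|<\delta\}$ is $(-\delta,\delta)^2\times T^2$ in action--angle coordinates $(j_z,\theta_j,k_z,\theta_k)$.

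\textbf{Step 1: localize the product example.} I would inspect the construction behind Theorem~\ref{thm:main} and arrange, after rescaling, that the isotopy $\varphi_H^t(L)$ stays inside a Weinstein-type neighbourhood $U_\delta=\{|j_z|,|k_z|<\delta\}$. Since $L$ is monotone and non-displaceable, such a localization is plausible: $L$ and its image are graphs of closed $1$-forms over $T^2$. Exactness of the Hamiltonian isotopy makes these forms exact, $\mathrm dh$, with $h$ having exactly two critical points. That is possible on $T^2$ with degenerate critical points, for example a maximum and a monkey-saddle-type point (Lusternik--Schnirelmann category of $T^2$ is $3$, but cuplength bounds only count critical \emph{values}... in fact $\mathrm{cat}(T^2)=3$ forces three critical points). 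So a purely local graph model cannot work. The two-point example must instead use flux-type or global behaviour, for instance a non-graphical image or an isotopy that leaves the neighbourhood.

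\textbf{Step 2: transfer.} Suppose the isotopy in Theorem~\ref{thm:main} is supported in $\{|j_z|,|k_z|<1-\eta\}$, which avoids the poles. That region is symplectomorphic to $(-1+\eta,1-\eta)^2\times T^2$. After scaling the form by a constant, the square embeds into the moment triangle of $\CP^2$ centred at the barycenter, because area only needs to be preserved up to a global scalar and a Hamiltonian can be multiplied accordingly. Transporting the Hamiltonian by this toric embedding and extending it by zero gives $H_{\CP^2}$, and the intersection pattern is preserved: exactly two isolated points with coinciding tangent planes. The image of $L$ is the fibre over the center, which matches $T_{\mathrm{Cl}}$ when the square is centred at the barycenter.

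\textbf{Main obstacle.} The main obstacle is the support condition. The square $(-1,1)^2$ has side $2$, while the inscribed square of the $\CP^2$ triangle centred at the barycenter is smaller. Monotonicity forces the torus to be the central fibre, and displacement energy is tied to the distance to the boundary. Hence I would need the construction of Theorem~\ref{thm:main} to have support in a square of half-width at most the inradius-compatible size. I would try first to check the explicit Hamiltonian for a small-support version. If that fails, I would redo the construction directly in $\CP^2$, reusing the same local models near the two degenerate points and the same counting argument for intersections.
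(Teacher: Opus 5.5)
There is a genuine gap, and it lies in the premise of Step~2. First, the construction behind Theorem~\ref{thm:main} is not supported away from the poles. The radial rotation $\Phi_\rho$ turns $j$ and $k$ through an angle close to $\pi/2$ about the horizontal axis $S/r$. So the intermediate torus $L_\rho=\Phi_\rho(L)$ passes through neighbourhoods of $P_\sigma=(\sigma N_0,-\sigma N_0)$, where $|j_z|=|k_z|=1$. The four intersections of Proposition~\ref{prop:four} and the support of the shear $G_\rho$ lie exactly there.

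More seriously, no two-point example can satisfy your support hypothesis. In action--angle coordinates the open torus orbit $(-1,1)^2\times T^2$ is an open subset of $T^*T^2$, with $L$ as the zero section. A Hamiltonian supported in $\{|j_z|,|k_z|<1-\eta\}$ therefore extends by zero to a compactly supported Hamiltonian isotopy of $T^*T^2$. The Hofer/Laudenbach--Sikorav cup-length theorem for zero sections of cotangent bundles then forces at least three intersection points, whether or not the image is a graph. Your Step~1 already noticed the graphical case; the ``non-graphical'' escape route does not exist inside the chart.

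The same argument applies verbatim to $X^\circ\cong\Delta_p^\circ\times T^2\subset T^*T^2$ in $\CP^2$. After cutting off the Hamiltonian near $\bigcup_t\varphi^t(T_{\mathrm{Cl}})$, any isotopy that keeps $T_{\mathrm{Cl}}$ inside $X^\circ$ yields at least three intersections. Hence every Hamiltonian realizing Theorem~\ref{thm:cp2} must carry $T_{\mathrm{Cl}}$ across the toric divisors, and no transplant through a toric embedding can work. The size mismatch you call the main obstacle is a red herring: rescaling $\omega$ and $H$ by the same constant fits any compact square into a neighbourhood of the barycenter.

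Your fallback, redoing the construction directly in $\CP^2$, is the right direction, but as stated it omits the whole substance. The local merging move (Lemma~\ref{lem:merge}) can only be used once you have a global preparatory Hamiltonian with all of the following properties:
\begin{enumerate}
\item its image meets $T_{\mathrm{Cl}}$ transversely in exactly four points;
\item these form two pairs of nondegenerate saddles with a common critical value;
\item each pair is joined by a regular level arc of a \emph{single-valued} local generating function, with no further sheet of the image over the relevant angle disk;
\item there is room to shear without leaving $X^\circ$.
\end{enumerate}
The paper supplies this with the projective orthogonal map $\Phi_U=\varphi_{H_U}^1$, which does cross the divisors; for instance, $\Phi_U(T_{\mathrm{Cl}})$ meets $\{z_0=0\}$. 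Proposition~\ref{prop:cp2-four} finds the four points $P_{\varepsilon,\delta}$. The arcs $\Gamma_\pm$, along which $p_1\,\mathrm dq_1+p_2\,\mathrm dq_2=0$, give the equal-level condition. Lemma~\ref{lem:cp2-level-arcs} proves the single-sheet property, including the exclusion of sheets accumulating on $y_0y_1y_2=0$. The moment-domain control \eqref{eq:cp2-moment-control} keeps the shears $K_\sigma=-\chi_\sigma h_\sigma$ inside $X^\circ$, so that no new intersections appear. None of this follows from Theorem~\ref{thm:main}, and your proposal gives no mechanism for producing it.
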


Theorem~\ref{thm:cp2} is \emph{not} an Arnol'd--Givental example since
the Clifford torus in \(\CP^2\) cannot be a component of the fixed point set
of an anti-symplectic involution.  Indeed, by the Smith inequality, the total
\(\F_2\)-Betti number of the fixed set of a smooth involution is at most that
of the ambient manifold, but here 
\[
 \dim_{\F_2}H_*(T^2;\F_2)=4>3
 =\dim_{\F_2}H_*(\CP^2;\F_2).
\]
See \cite[\S1]{Brendel} for this obstruction and its relation to real
Lagrangians. 


Both results attain known uniform lower bounds.  For the product torus
\eqref{eq:L}, \cite[Theorem~7 and Example~15]{Gong} gives
\(\#(L\cap\phi(L))\ge2\) for every \(\phi\in\Ham(M,\omega)\).  For the
Clifford torus in \(\CP^n\), \cite[Theorem~2]{Gong} gives the same lower
bound.  Consequently:

\begin{corollary}\label{cor:sharp}
For both pairs
\[
 (S^2\times S^2,L),\qquad (\CP^2,T_{\mathrm{Cl}}),
\]
the minimum, over Hamiltonian diffeomorphisms, of the number of geometric
self-intersections with a Hamiltonian image is equal to two.
\end{corollary}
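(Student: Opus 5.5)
The corollary follows by combining an upper bound from Theorems~\ref{thm:main} and~\ref{thm:cp2} with a lower bound from \cite{Gong}. Write
\[
 m(M,L)=\min_{\phi\in\Ham(M,\omega)}\#\bigl(L\cap\phi(L)\bigr).
\]
This minimum is attained, because the quantity takes values in $\mathbb Z_{\ge0}\cup\{\infty\}$, which is well-ordered. The claim is that $m=2$ in both cases, and we show the two inequalities separately.

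For the upper bound, take $\phi=\varphi_H^1$ with $H$ from Theorem~\ref{thm:main}. Since $H$ is smooth and time-periodic, $\varphi_H^1$ belongs to $\Ham(M,\omega)$ by the definition following \eqref{eq:ham-convention}. Theorem~\ref{thm:main} gives $\#(L\cap\varphi_H^1(L))=2$, hence $m(S^2\times S^2,L)\le 2$. In the same way, $\varphi^1_{H_{\CP^2}}\in\Ham(\CP^2,\omega_{\mathrm{FS}})$, and Theorem~\ref{thm:cp2} gives $m(\CP^2,T_{\mathrm{Cl}})\le2$.

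For the lower bound, we invoke the uniform estimates quoted in the paragraph before the corollary. For the product torus, \cite[Theorem~7 and Example~15]{Gong} gives $\#(L\cap\phi(L))\ge2$ for every $\phi\in\Ham(M,\omega)$. For the Clifford torus, \cite[Theorem~2]{Gong} gives the same bound with $n=2$.

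The one point to check is that the conventions in \cite{Gong} match those used here, namely the area normalization $\int_{S^2}\omega_{S^2}=4\pi$ and $\int_{\CP^1}\omega_{\mathrm{FS}}=2\pi$. Rescaling the symplectic form does not change the group $\Ham$; it only reparametrizes Hamiltonians. The tori $L$ and $T_{\mathrm{Cl}}$ are the standard monotone tori for these forms, so the cited bounds apply. Combining the two inequalities gives $m=2$ for both pairs. There is no real obstacle here: all the content lies in the two theorems and in the cited lower bounds.
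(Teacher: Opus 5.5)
Your proposal is correct and follows the paper's proof: the upper bound comes from Theorems~\ref{thm:main} and~\ref{thm:cp2}, and the lower bound comes from the cited results of \cite{Gong}. The normalization is handled, as in the paper, by noting that replacing $\omega$ and $H$ by $\lambda\omega$ and $\lambda H$ leaves the Hamiltonian vector field, and hence $\Ham$, unchanged.
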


\begin{proof}
The cited lower bounds give two intersections or more for every
Hamiltonian image of either torus.  In the product case, the normalization
in \cite[Example~15]{Gong} differs from \eqref{eq:ambient} by a common
positive factor: replacing \(\omega\) and \(H\) by \(\lambda\omega\)
and \(\lambda H\) leaves the Hamiltonian vector field unchanged under
\eqref{eq:ham-convention}.  The matching upper bounds are provided by
Theorems~\ref{thm:main} and \ref{thm:cp2}.
\end{proof}

There is also a cup-length consequence.  Both Clifford tori above are
monotone, and the lower bounds just cited imply their nondisplaceability; see
also the discussion of these standard examples in \cite{Gong}.  The
cup-length convention used there gives
\(\operatorname{cl}_{\mathbb K}(T^2)=3\) for every field \(\mathbb K\):
there are two positive-degree factors in a nonzero cup product on
\(T^2\), and this convention adds one.  Hence either main theorem
produces two isolated intersections, fewer than this cup-length.  In particular, the
unrestricted monotone Lagrangian cup-length statement formulated in
\cite[Conjecture~2]{Gong} does not hold without an additional hypothesis.

\subsection*{Idea of the proof}

The decisive local mechanism in both proofs is classical rather than a new
singularity-theoretic phenomenon.  The cubic
\begin{equation}\label{eq:cubic-intro}
 c+t(t^2-s^2)
\end{equation}
is, after a linear rescaling, the elliptic-umbilic catastrophe in Thom's
theory and the real \(D_4^-\) simple singularity in Arnol'd's classification
\cite{Thom72,Arnold72Sing}.  Its standard unfolding already exhibits the
coalescence of two equal-level Morse saddles into one degenerate cubic
critical point; for example,
\[
 x^3-3xy^2+\lambda x,\qquad \lambda>0,
\]
has two Morse saddles of the same critical value joined by a regular
zero-level arc, and they merge into \(x^3-3xy^2\) as \(\lambda\to0^+\).
Thus the prescribed cubic degeneration belongs to the classical
singularity theory of Thom and Arnol'd.  A complementary classical precedent
is Takens' procedure for combining critical points on a connected critical
level \cite[Theorem~(2.7) and the proof of Proposition~(2.9)]{Takens68}.
Takens' construction localizes the modification and controls the number of
remaining critical points, but does not prescribe a smooth \(D_4^-\) normal
form for the resulting point.  In the present symplectic problem, we require
both a \emph{relative} merger with precisely the cubic normal form
\eqref{eq:cubic-intro} and a compactly supported Hamiltonian realization
without additional Lagrangian intersections.  We first locate each saddle
pair on an embedded regular level arc; Section~5 then proves the required
relative saddle-merging lemma.  A related recent implementation appears in
\cite[\S3.1 and Lemma~3.1]{Jiao}.

For Theorem~\ref{thm:main}, we first apply a radial Hamiltonian depending on
\(r=|j+k|\).  Its time-one map \(\Phi_\rho\) rotates the two sphere factors
simultaneously about the axis \((j+k)/|j+k|\), with an angle that is nearly
\(\pi/2\) except in a transition region of size \(O(\rho)\).  Writing
\(L_\rho=\Phi_\rho(L)\) and rotating the reference torus to
\(L_*=\mathcal R(L)\), we obtain exactly four transverse intersections
\(L_\rho\cap L_*\), two near each point \(P_\sigma=(\sigma N_0,-\sigma N_0)\), where \(N_0=(0,0,1)\) and \(\sigma\in\{\pm1\}\).
The auxiliary rotation is crucial to our construction: it places the reference torus through the two polar neighborhoods and produces the indefinite quadratic term in the limiting potential that gives rise to the required saddle pairs. 
Near \(P_\sigma\), the cotangent-bundle coordinates of Section~2 express the
two Lagrangians locally as Lagrangian graphs over the momentum coordinates
\[
 q=\nabla F_\sigma(p),\qquad q=\nabla A_\rho(p).
\]
The four intersections occur on the shrinking scale \(|p|=O(\rho)\).  After
setting \(p=\rho v\) and rescaling the potentials by \(\rho^{-2}\), the
potential differences converge on a fixed annulus to the explicit planar
functions
\[
 f_{0,\sigma}(v)=h(|v|)-\frac{\sigma}{2}v_1v_2.
\]
The key role of this rescaling is to reduce the shrinking intersection problem to a fixed annulus with a smooth limiting potential, allowing the saddle merger to be constructed at the limiting scale and then transferred back to the original problem by an exact compactly supported correction.
Each limiting function has exactly two nondegenerate saddles, and
Lemma~\ref{lem:arc} shows that they lie on one regular level arc.  We then
apply the classical \(D_4^-\) saddle merger in the relative form of
Lemma~\ref{lem:merge}, and transfer this modification from the limiting
function back to the finite-\(\rho\) graph by an exact compactly supported
correction.  The corresponding Hamiltonian shear \(\Psi_\rho\) replaces the
two intersections near each \(P_\sigma\) by one isolated cubic intersection.
Because the Hessian of \eqref{eq:cubic-intro} vanishes at its critical point,
the two Lagrangian tangent planes coincide there.  Finally
\(\mathcal R^{-1}\) returns the reference torus from \(L_*\) to \(L\), and
the three Hamiltonian stages are placed in disjoint time intervals to produce
the periodic Hamiltonian in Theorem~\ref{thm:main}.

The proof of Theorem~\ref{thm:cp2} uses the same local singularity-theoretic
move but a different global preparation.  A specific projective unitary
transformation \(\Phi_U\) sends the Clifford torus to a Lagrangian having
exactly four transverse intersections with \(T_{\mathrm{Cl}}\).  A finite
symmetry organizes these four points into two pairs.  In toric action--angle
coordinates, each pair lies on an explicit arc \(\Gamma_\sigma\) along which
the Liouville one-form vanishes.  Consequently a neighborhood of each arc is
a single exact position graph \(p=\nabla f_\sigma(q)\), the two endpoints are
Morse saddles of the same critical value, and the arc between them is a
regular level arc.  Lemma~\ref{lem:merge} can therefore be applied directly
to each pair.  Two compactly supported Hamiltonian graph shears replace the
four transverse intersections by two isolated \(D_4^-\)-type cubic
intersections, again with coincident tangent planes.  By a graph shear
we mean a local Hamiltonian translation such as
\((q,p)\mapsto(q,p+\nabla h(q))\), which changes a position-graph
potential \(f\) to \(f+h\); the dual momentum-graph shear is used
for Theorem~\ref{thm:main}.  Combining the
projective unitary step and the local shears in disjoint time intervals gives
the Hamiltonian of Theorem~\ref{thm:cp2}.

\section*{Acknowledgments}
I would like to thank Professors Guangcun Lu and Jinxin Xue for their sustained support and encouragement of my research on the degenerate Arnold (-Givental) conjecture over the past six years. Over the past period of time, I had always believed that the degenerate Arnol'd(-Givental) conjecture was correct, and I even spent a great deal of time trying to prove that any Hamiltonian map $S^2\times S^2$ has at least 3 fixed points. Jiao's paper shattered this belief of mine. My previous work on the degenerate homological Arnol'd-Givental conjecture, combined with AI, led to this paper.

\section*{AI disclosure}
Large language model tools were used to assist in the preparation of this article.
The author takes full responsibility for its
mathematical content and for any errors that may remain.

\section{Canonical coordinates and local Lagrangian generating functions}
A canonical coordinate system \((q,p)\), with
\(q=(q_1,q_2)\) and \(p=(p_1,p_2)\), means one in which
\(\omega=\sum_{i=1}^2\mathrm dq_i\wedge\mathrm dp_i\).  For a momentum
graph \(q=\nabla A(p)\), the scalar function \(A\) will be called its
potential; the corresponding elementary generating-function facts are
recorded below.

We first establish the coordinate formulas used in the construction.
Using the round metric, we identify
\[
 T^*S^2=\{(n,\eta)\in\R^3\times\R^3:
 |n|=1,\ \eta\cdot n=0\},
\]
where the vector $\eta\in T_nS^2$ represents the covector
$\eta^\flat\in T_n^*S^2$ defined by
$\eta^\flat(v)=\eta\cdot v$.
Let $\pi:T^*S^2\to S^2$ be the bundle projection and
$\iota:S^2\hookrightarrow\R^3$ the standard inclusion.
To distinguish functions on the cotangent bundle from their pullbacks,
define the $\R^3$-valued maps
\[
 \bar n=\iota\circ\pi,\qquad
 \bar\eta:T^*S^2\longrightarrow\R^3,\quad
 \bar\eta(n,\eta)=\eta.
\]
Thus $\bar n(n,\eta)=n$.  If $\lambda$ denotes the tautological
one-form, then, by its defining property,
\[
 \lambda_{(n,\eta)}(\dot n,\dot\eta)
 =\eta^\flat\bigl(\mathrm d\pi_{(n,\eta)}(\dot n,\dot\eta)\bigr)
 =\eta\cdot\dot n.
\]
Consequently,
\begin{equation}\label{eq:canonical-cotangent-form}
 \lambda=\bar\eta\cdot\mathrm d\bar n,\qquad
 \omega_{\mathrm{can}}=-\mathrm d\lambda
 =-\mathrm d(\bar\eta\cdot\mathrm d\bar n).
\end{equation}
This is the standard tautological construction of the canonical
symplectic form on a cotangent bundle; see
\cite[\S2.2--\S2.3]{CdS}.  The symplectic identification of the
complement of the diagonal in the equal-area product $S^2\times S^2$
with a disk cotangent bundle of $S^2$ is standard and can be viewed
through symplectic-cut compactification; see
\cite[Proposition~4.4]{AudinCut} and \cite[\S2.1.1]{LiWu}.
For the later calculations we need a particular explicit realization,
which we verify below; compare also \cite[\S2.2]{Jiao}.
Here and below, $j,k:M\to\R^3$ also denote the two ambient coordinate
maps of the sphere factors.

\begin{lemma}\label{lem:coordinates}
Let $\Delta=\{(j,k)\in M:j=k\}$ be the diagonal and let
\[
 D_2T^*S^2=\{(n,\eta)\in T^*S^2:|\eta|<2\}
\]
be the open cotangent disk bundle of radius $2$, with the fiber norm
induced by the round metric. On $M\setminus\Delta$ define
\[
 S(j,k)=j+k,\qquad r(j,k)=|S(j,k)|,
\]
and the $\R^3$-valued functions
\[
 N(j,k)=\frac{j-k}{|j-k|},\qquad
 E(j,k)=S(j,k)\times N(j,k).
\]
Then
\begin{equation}\label{eq:n-eta}
 \begin{aligned}
 \Psi:M\setminus\Delta&\longrightarrow D_2T^*S^2,\\
 (j,k)&\longmapsto\bigl(N(j,k),E(j,k)\bigr)
 \end{aligned}
\end{equation}
is a symplectomorphism from $(M\setminus\Delta,\omega)$ to
$(D_2T^*S^2,\omega_{\mathrm{can}})$. Its inverse, written in terms of
a point $(n,\eta)$ in the \emph{target} cotangent bundle, is
\begin{equation}\label{eq:inverse}
 j=a n+\tfrac12S,\qquad k=-a n+\tfrac12S,\qquad
 S=n\times\eta,\quad r=|\eta|,\quad
 a=\sqrt{1-r^2/4}.
\end{equation}

On each of the two open regions $N_z>0$ and $N_z<0$ in $M\setminus\Delta$,
introduce the local base coordinates $q=(q_1,q_2)=(N_x,N_y)$.
Let $p=(p_1,p_2)\in\R^2$ be the corresponding momentum coordinates,
characterized by the \emph{pulled-back} tautological one-form
\[
 \Psi^*\lambda=p_1\,\mathrm dq_1+p_2\,\mathrm dq_2.
\]
All $q_i,p_i$ in this statement are therefore local functions on
$M\setminus\Delta$. Then
\begin{align}
 N_z&=\sigma\sqrt{1-|q|^2},\qquad
 \sigma=\begin{cases}1,&N_z>0,\\-1,&N_z<0,\end{cases}
 \label{eq:nz}\\
 S&=(-N_zp_2,N_zp_1,q_1p_2-q_2p_1),\label{eq:S-p}\\
 r^2&=|p|^2-(q\cdot p)^2,\label{eq:canonical}\\
 \omega&=\mathrm dq_1\wedge\mathrm dp_1+
        \mathrm dq_2\wedge\mathrm dp_2.\notag
\end{align}
\end{lemma}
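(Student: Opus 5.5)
The proof splits into four steps: the map $\Psi$ is a well-defined bijection with the stated inverse; it is symplectic (the substantive part); the local formulas hold; and the canonical form follows.

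\textbf{Step 1: $\Psi$ lands in $D_2T^*S^2$ and is inverted by \eqref{eq:inverse}.} Since $|j|=|k|=1$, we have $(j+k)\cdot(j-k)=|j|^2-|k|^2=0$. Hence $S\perp N$, and $E=S\times N$ satisfies $E\cdot N=0$ and $|E|=|S|=r$. From $|j+k|^2+|j-k|^2=4$ we get $r<2$ exactly off the antidiagonal's complement issue: $r=2$ iff $j=k$. So $\Psi(M\setminus\Delta)\subset D_2T^*S^2$.

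For the inverse, $n\times\eta=N\times(S\times N)=S$, using $S\perp N$ and $|N|=1$. Also $|j-k|=\sqrt{4-r^2}=2a$. Therefore $j=\tfrac12(S+2aN)$ and $k=\tfrac12(S-2aN)$. Conversely, these formulas define points with $|j|^2=a^2+r^2/4=1$, and the same is true for $k$. The two compositions are the identity, so $\Psi$ is a diffeomorphism.

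\textbf{Step 2: $\Psi$ is symplectic.} I compare $\Psi^*\lambda=E\cdot \mathrm dN$ with a primitive of $-\omega$ on $M\setminus\Delta$. With $S=2a\,\cdot$ structure, write $j=\tfrac12 S+aN$ and $k=\tfrac12 S-aN$. The key identity is
\[
E\cdot\mathrm dN=(S\times N)\cdot\mathrm dN=S\cdot(N\times\mathrm dN).
\]
This expresses the form via the "angular momentum" $S$, and $S$ is the moment map of the diagonal $SO(3)$-action on $M$.

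I would then verify $-\mathrm d(E\cdot\mathrm dN)=\pi_1^*\omega_{S^2}+\pi_2^*\omega_{S^2}$ directly. The first route is equivariance: both sides are $SO(3)$-invariant, so it suffices to check at points with $N=(0,0,1)$ and $S=(r,0,0)$, on a basis of the four-dimensional tangent space. Both forms are invariant closed 2-forms, and the comparison reduces to a finite linear-algebra computation at that point.

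An alternative route is to use the explicit form $\omega_{S^2}=\tfrac12\,\mathbf x\cdot(\mathrm d\mathbf x\times\mathrm d\mathbf x)$ and to substitute $j,k$ in terms of $(N,S)$. The cross terms cancel between $j$ and $k$ because $\pm a$ enters antisymmetrically. The $a$-dependence must collapse, using $a\,\mathrm da=-\tfrac14\, r\,\mathrm dr$ and the constraint $S\cdot N=0$.

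\textbf{This computation is the main obstacle:} keeping track of the $a$-terms and of the sign conventions, namely $\omega_{\mathrm{can}}=-\mathrm d\lambda$ and the orientation of $E=S\times N$ rather than $N\times S$. I would fix signs by the pointwise check described above.

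\textbf{Step 3: local formulas.} On $N_z\neq0$ the equation $N_z=\sigma\sqrt{1-|q|^2}$ is immediate. Write $\lambda=E\cdot\mathrm dN$ with $\mathrm dN_z=-(q\cdot\mathrm dq)/N_z$. This gives
\[
p_i=E_i-E_z\,q_i/N_z .
\]
Combined with the constraint $E\cdot N=0$, which reads $E_z N_z=-(E_1q_1+E_2q_2)$, this linear relation can be solved for $E$ in terms of $(q,p)$. Then $S=N\times E$ yields \eqref{eq:S-p}, by a short cross-product computation.

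For \eqref{eq:canonical}, note $r^2=|E|^2$. Expressing $|E|^2$ through $p$ via the inverse metric of the graph parametrization gives $|p|^2-(q\cdot p)^2$. Equivalently, this is $|S|^2$ computed directly from \eqref{eq:S-p}, using $N_z^2=1-|q|^2$.

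\textbf{Step 4: canonical form.} Finally, $\omega=-\mathrm d(p\cdot\mathrm dq)=\sum_i \mathrm dq_i\wedge\mathrm dp_i$ follows from Step 2.
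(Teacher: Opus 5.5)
Your Steps 1, 3 and 4 are correct and essentially coincide with the paper's argument. The paper likewise gets $p_i=E_i-E_zq_i/N_z$, solves $E\cdot N=0$ for $E_z=-N_z(q\cdot p)$, notes that $E$ is the tangential projection of $\widetilde p=(p_1,p_2,0)$ so that $S=N\times E=N\times\widetilde p$, and reads off $r^2=|E|^2$. Your Lagrange-identity check from \eqref{eq:S-p} is an equally good finish.

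The gap is Step~2, which is the real content of the lemma: you never establish $\Psi^*\omega_{\mathrm{can}}=\omega$. The invariance reduction is legitimate. But it leaves an $r$-dependent evaluation of $\sum_i\mathrm dN_i\wedge\mathrm dE_i$ on a basis of $T_{(j,k)}M$ for every $r\in[0,2)$, and you do not perform it. The substitution sketch is also inaccurate as stated. Only the terms odd in $a$ cancel between the two factors. Writing $D=aN=(j-k)/2$, one gets $\omega=\tfrac18S\cdot(\mathrm dS\times\mathrm dS)+\tfrac12S\cdot(\mathrm dD\times\mathrm dD)+D\cdot(\mathrm dS\times\mathrm dD)$, and the cross term $a^2N\cdot(\mathrm dS\times\mathrm dN)$ survives and must still be matched. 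Calling this ``the main obstacle'' and deferring it leaves the symplectic property asserted rather than proved.

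You already have both ingredients of the paper's computation-free argument.
\begin{enumerate}
\item For $b\in\R^3$, let $X_b=(b\times j,b\times k)$ generate the diagonal rotations. The identity $(\omega_{S^2})_j(b\times j,v)=b\cdot v$ gives $\iota_{X_b}\omega=\mathrm d(b\cdot S)$; this is your remark that $S$ is a moment map.
\item $\Psi$ is $SO(3)$-equivariant and cotangent lifts preserve $\lambda$, so $\mathcal L_{X_b}\Psi^*\lambda=0$. Cartan's formula then gives $\iota_{X_b}\Psi^*\omega_{\mathrm{can}}=\mathrm d\bigl(\Psi^*\lambda(X_b)\bigr)$.
\item Your identity $E\cdot\mathrm dN=S\cdot(N\times\mathrm dN)$, together with $\mathrm dN(X_b)=b\times N$ and $S\perp N$, yields $\Psi^*\lambda(X_b)=b\cdot S$.
\item Hence $\beta=\Psi^*\omega_{\mathrm{can}}-\omega$ satisfies $\iota_{X_b}\beta=0$ for every $b$. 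Where $j\times k\neq0$, the $X_b$ span a $3$-dimensional subspace of the $4$-dimensional tangent space. A skew form whose kernel has dimension at least $3$ vanishes, since its rank is even.
\item The set $\{j\times k\neq0\}$ is dense in $M\setminus\Delta$, so continuity gives $\beta=0$ everywhere, including the antidiagonal $j=-k$.
\end{enumerate}
The paper runs the same argument on the target side, with $Y_b=(b\times n,b\times\eta)$ and $\lambda(Y_b)=b\cdot(\bar n\times\bar\eta)$. With this inserted, your Step~4 and the rest of the proposal go through.
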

\begin{proof}
We first verify that $\Psi$ is a diffeomorphism. Since $j,k$ are
unit vectors,
\[
 S\cdot(j-k)=|j|^2-|k|^2=0,\qquad
 |S|^2+|j-k|^2=4.
\]
Thus $N\in S^2$, $S\perp N$, and $E=S\times N$ belongs to
$T_NS^2\cong T_N^*S^2$, with $|E|=r<2$.
The vector triple-product identity gives
$N\times E=N\times(S\times N)=S$.
Conversely, start with $(n,\eta)\in D_2T^*S^2$, put
$S=n\times\eta$ and $a=\sqrt{1-|\eta|^2/4}>0$, and define $j,k$
by \eqref{eq:inverse}. Since $S\perp n$ and $|S|=|\eta|$,
\[
 |j|^2=|k|^2=a^2+\tfrac14|S|^2=1,
 \qquad j-k=2an\neq0.
\]
These formulas are smooth and mutually inverse, proving that $\Psi$
is a diffeomorphism.

Before verifying the symplectic identity, we make the pullback of the
one-form precise. By definition of $N,E$,
\[
 \bar n\circ\Psi=N,\qquad \bar\eta\circ\Psi=E.
\]
Thus the standard pullback and exterior-differentiation rules give
\begin{equation}\label{eq:pullback-lambda}
 \begin{aligned}
 \Psi^*\lambda
 &=\Psi^*(\bar\eta\cdot\mathrm d\bar n)
   =E\cdot\mathrm dN\\
 &=\frac{(S\times(j-k))\cdot\mathrm d(j-k)}{|j-k|^2}\\
 &=\frac{(j\times k)\cdot(\mathrm dk-\mathrm dj)}{1-j\cdot k}.
 \end{aligned}
\end{equation}
Here $\mathrm dj,\mathrm dk$ are the $\R^3$-valued differentials
of the two coordinate maps on $M$. In particular, every expression
in \eqref{eq:pullback-lambda} is a one-form on $M\setminus\Delta$.

We next show directly that $\Psi$ preserves the symplectic forms.
Let $SO(3)$ denote the group of orientation-preserving linear
isometries of $\R^3$. It acts diagonally on $M$ by
$R\cdot(j,k)=(Rj,Rk)$ and, via the round metric, by cotangent
lifts on $T^*S^2$ by $R\cdot(n,\eta)=(Rn,R\eta)$.
Since rotations preserve lengths and cross products,
\[
 \Psi(Rj,Rk)=R\cdot\Psi(j,k)\qquad(R\in SO(3));
\]
that is, $\Psi$ is $SO(3)$-equivariant.

Fix $b\in\R^3$. Let $R_t$ be the one-parameter family of rotations
whose infinitesimal action on $v\in\R^3$ is
$\left.\frac{\mathrm d}{\mathrm dt}\right|_{t=0}R_tv=b\times v$.
The corresponding infinitesimal generators on $M$ and $T^*S^2$ are
respectively
\[
 X_b(j,k)=(b\times j,b\times k),\qquad
 Y_b(n,\eta)=(b\times n,b\times\eta).
\]
Differentiating $\Psi(R_tj,R_tk)=R_t\cdot\Psi(j,k)$ at $t=0$
gives the precise relation
\begin{equation}\label{eq:Psi-infinitesimal}
 \mathrm d\Psi_{(j,k)}\bigl(X_b(j,k)\bigr)
 =Y_b\bigl(\Psi(j,k)\bigr).
\end{equation}
Using the explicitly prescribed sphere form
\eqref{eq:sphere-area-form}, for every $v\in T_jS^2$ we obtain
\[
 \begin{aligned}
 (\omega_{S^2})_j(b\times j,v)
 &=j\cdot\bigl((b\times j)\times v\bigr)\\
 &=j\cdot\bigl(j(b\cdot v)-b(j\cdot v)\bigr)
 =b\cdot v,
 \end{aligned}
\]
where $j\cdot v=0$ and $|j|=1$ were used. The same calculation
holds on the second sphere. Consequently, with the Hamiltonian
sign convention \eqref{eq:ham-convention},
\[
 \iota_{X_b}\omega
 =\mathrm d(b\cdot j)+\mathrm d(b\cdot k)
 =\mathrm d(b\cdot S).
\]

The cotangent-lifted rotations preserve the tautological one-form
$\lambda$: this follows from
$(R\bar\eta)\cdot\mathrm d(R\bar n)
=\bar\eta\cdot\mathrm d\bar n$ for a fixed $R\in SO(3)$. 
Hence $\mathcal{L}_{Y_b}\lambda=0$. Then by Cartan's formula, together with
$\omega_{\mathrm{can}}=-\mathrm d\lambda$, yields
\[
 \begin{aligned}
 \iota_{Y_b}\omega_{\mathrm{can}}
 &=\mathrm d\bigl(\lambda(Y_b)\bigr)\\
 &=\mathrm d\bigl(\bar\eta\cdot(b\times\bar n)\bigr)\\
 &=\mathrm d\bigl(b\cdot(\bar n\times\bar\eta)\bigr).
 \end{aligned}
\]
Because $(\bar n\times\bar\eta)\circ\Psi=N\times E=S$,
equation \eqref{eq:Psi-infinitesimal} now implies
\[
 \iota_{X_b}\bigl(\Psi^*\omega_{\mathrm{can}}\bigr)
 =\Psi^*\bigl(\iota_{Y_b}\omega_{\mathrm{can}}\bigr)
 =\mathrm d(b\cdot S)
 =\iota_{X_b}\omega.
\]
Define the two-form $\beta=\Psi^*\omega_{\mathrm{can}}-\omega$
on $M\setminus\Delta$. We have shown that
$\iota_{X_b}\beta=0$ for every $b\in\R^3$.
If $j\times k\neq0$, the linear map $b\mapsto X_b(j,k)$
is injective: a vector satisfying $b\times j=b\times k=0$
must be zero because $j$ and $k$ are not parallel.
Thus the vectors $X_b(j,k)$ span a three-dimensional subspace
of the four-dimensional tangent space. Since $\beta$ vanishes
whenever one of its arguments belongs to this subspace and is
skew-symmetric, it vanishes identically at $(j,k)$.
The points with $j\times k\neq0$ are dense in $M\setminus\Delta$;
by continuity the same identity holds at $j=-k$.
Therefore
\[
\Psi^*\omega_{\mathrm{can}}
 =\pi_1^*\omega_{S^2}+\pi_2^*\omega_{S^2}=\omega.
\]
This proves that $\Psi$ is a symplectic diffeomorphism.

It remains to derive the explicit coordinate identities. On either
region where $N_z\neq0$, the unit-length condition
$N=(q_1,q_2,N_z)\in S^2$ gives \eqref{eq:nz} and
\[
 \mathrm dN_z
 =-\frac{q_1}{N_z}\,\mathrm dq_1
  -\frac{q_2}{N_z}\,\mathrm dq_2.
\]
Writing $E=(E_x,E_y,E_z)$, the pullback formula
\eqref{eq:pullback-lambda} yields
\[
 \begin{aligned}
 \Psi^*\lambda=E\cdot\mathrm dN
 &=E_x\,\mathrm dq_1+E_y\,\mathrm dq_2
   +E_z\,\mathrm dN_z\\
 &=\left(E_x-\frac{E_zq_1}{N_z}\right)\mathrm dq_1
  +\left(E_y-\frac{E_zq_2}{N_z}\right)\mathrm dq_2.
 \end{aligned}
\]
Comparing coefficients with the defining relation
$\Psi^*\lambda=p_1\,\mathrm dq_1+p_2\,\mathrm dq_2$ gives
\begin{equation}\label{eq:p-from-eta}
 p_1=E_x-\frac{E_zq_1}{N_z},\qquad
 p_2=E_y-\frac{E_zq_2}{N_z}.
\end{equation}
Since $E\cdot N=0$, these identities imply
\[
 \begin{aligned}
 0&=q_1E_x+q_2E_y+N_zE_z\\
  &=q\cdot p+\frac{E_z}{N_z}
    (q_1^2+q_2^2+N_z^2)
   =q\cdot p+\frac{E_z}{N_z}.
 \end{aligned}
\]
Thus $E_z=-N_z(q\cdot p)$; substituting back into
\eqref{eq:p-from-eta} gives
\begin{equation}\label{eq:eta-from-p}
 E=\bigl(p_1-q_1(q\cdot p),\ 
          p_2-q_2(q\cdot p),\ 
          -N_z(q\cdot p)\bigr).
\end{equation}
For an equivalent geometric expression, put
$\widetilde p=(p_1,p_2,0)\in\R^3$ and
$P=I-NN^T$, where $I$ is the $3\times3$ identity matrix and
$N^T$ denotes the transpose of the column vector $N$.
Then $Pv=v-(N\cdot v)N$ is the orthogonal projection of
$v\in\R^3$ onto $N^\perp=T_NS^2$, and
$E=P\widetilde p$.
Consequently,
\[
 S=N\times E=N\times\widetilde p
 =(-N_zp_2,N_zp_1,q_1p_2-q_2p_1),
\]
which proves \eqref{eq:S-p}. In particular, since $N_z\neq0$,
\[
 p_1=\frac{S_y}{N_z},\qquad
 p_2=-\frac{S_x}{N_z}.
\]
These formulas express the momentum coordinates in terms of
$(N,S)$ and will also be used in later calculations. Moreover,
\[
 \begin{aligned}
 r^2=|S|^2=|E|^2
 &=|\widetilde p|^2-(N\cdot\widetilde p)^2\\
 &=|p|^2-(q\cdot p)^2,
 \end{aligned}
\]
which proves the first equality in \eqref{eq:canonical}.
Finally, combining the symplectic identity
$\Psi^*\omega_{\mathrm{can}}=\omega$, proved above using
$SO(3)$-equivariance, with the pullback formula
\eqref{eq:pullback-lambda} and its local expression
$\Psi^*\lambda=p_1\,\mathrm dq_1+p_2\,\mathrm dq_2$,
we obtain
\[
 \begin{aligned}
 \omega
 &=\Psi^*\omega_{\mathrm{can}}
 =-\Psi^*(\mathrm d\lambda)
 =-\mathrm d(\Psi^*\lambda)\\
 &=-\mathrm d(p_1\,\mathrm dq_1+p_2\,\mathrm dq_2)\\
 &=\mathrm dq_1\wedge\mathrm dp_1+
   \mathrm dq_2\wedge\mathrm dp_2.
 \end{aligned}
\]
This completes the proof of the asserted Darboux
coordinate formulas.
\end{proof}

\begin{remark}
For brevity in the subsequent sections, we use the conventional symbols
$n=N$ and $\eta=E$ for these \emph{source-side pullback functions} on
$M\setminus\Delta$ whenever the context is clear. In particular,
$n_z=N_z$ in formulas involving the local Darboux coordinates.
\end{remark}

\begin{lemma}\label{lem:graph}
Let $(q,p)$ be local Darboux coordinates with
$\omega=\sum_{i=1}^2\mathrm dq_i\wedge\mathrm dp_i$.
Let $U\subset\R^2$ be open, and let
$A,F\in C^\infty(U)$ be such that the graphs
\[
\Gamma_A=\{(q,p):q=\nabla A(p),\ p\in U\},
\qquad
\Gamma_F=\{(q,p):q=\nabla F(p),\ p\in U\}
\]
lie in the coordinate domain.
Then $\Gamma_A$ and $\Gamma_F$ are Lagrangian
submanifolds. 
Their intersections within the chart correspond precisely to
critical points of $F-A$.
Such an intersection is transverse if and only if
$D^2(F-A)$ is invertible at its momentum.
If this Hessian vanishes, the two tangent
spaces coincide.
\end{lemma}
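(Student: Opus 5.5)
The plan is to treat each graph as the image of an embedding $p\mapsto(\nabla A(p),p)$ and compute everything by hand in the chart. \textbf{Lagrangian property.} Write $\omega=\sum_i\mathrm dq_i\wedge\mathrm dp_i=-\mathrm d\bigl(\sum_i p_i\,\mathrm dq_i\bigr)$. Pull back $\sum_i p_i\,\mathrm dq_i$ along the embedding $\iota_A(p)=(\nabla A(p),p)$ to get $\sum_{i,k}p_i\,\partial_k\partial_iA\,\mathrm dp_k$. This equals $\mathrm d\bigl(p\cdot\nabla A-A\bigr)$, the Legendre-transform identity. Hence $\iota_A^*\omega=-\mathrm d\,\mathrm d(\cdot)=0$. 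An equivalent route avoids the primitive: $\iota_A^*\omega=\sum_{i,k}\partial_k\partial_iA\,\mathrm dp_k\wedge\mathrm dp_i$, which vanishes by symmetry of the Hessian. Since $\Gamma_A$ is an embedded $2$-dimensional submanifold, this shows it is Lagrangian. The same argument applies to $F$.

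\textbf{Intersections.} A point $(q,p)$ of the chart lies in $\Gamma_A\cap\Gamma_F$ iff $p\in U$ and $q=\nabla A(p)=\nabla F(p)$. This holds iff $\nabla(F-A)(p)=0$, and the intersection point is then determined by $p$. This gives the bijection between intersections in the chart and $\Crit(F-A)$.

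\textbf{Transversality and coincidence.} At an intersection over momentum $p$, the tangent spaces are
\[
T\Gamma_A=\{(D^2A(p)v,v):v\in\R^2\},\qquad T\Gamma_F=\{(D^2F(p)w,w):w\in\R^2\}.
\]
A common vector must have $v=w$ and $D^2(F-A)(p)v=0$, so $T\Gamma_A\cap T\Gamma_F\cong\ker D^2(F-A)(p)$. Both tangent spaces have dimension $2$ in the $4$-dimensional $T_xM$. By the dimension formula they span $T_xM$ exactly when their intersection is $0$, that is, exactly when the Hessian is invertible. If the Hessian vanishes, the kernel is all of $\R^2$, so $T\Gamma_A\cap T\Gamma_F$ is $2$-dimensional and the two planes are equal.

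There is no real obstacle here; the only point requiring care is the sign and ordering convention, which makes $q$, not $p$, the graph variable. This convention does not affect any of the three computations above.
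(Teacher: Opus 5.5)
Your proposal is correct and follows the paper's own argument: you parametrize each graph by $p\mapsto(\nabla A(p),p)$, use symmetry of the Hessian to see that $\omega$ pulls back to zero, identify intersections with zeros of $\nabla(F-A)$, and identify $T\Gamma_A\cap T\Gamma_F$ with $\ker D^2(F-A)(p)$. Your explicit dimension count and the alternative Legendre-transform primitive $p\cdot\nabla A-A$ are valid additions, but they do not change the argument.
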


\begin{proof}
The graph $\Gamma_A$ is parametrized by
$p\mapsto(\nabla A(p),p)$. Pulling back
the symplectic form gives
\[
\sum_{i,\ell=1}^2
\frac{\partial^2 A}{\partial p_i\partial p_\ell}
\,\mathrm dp_\ell\wedge\mathrm dp_i=0
\]
by symmetry of the Hessian. Since the graph
has half the ambient dimension, it is Lagrangian.
The same argument applies to $\Gamma_F$.

An intersection occurs precisely when
$\nabla(F-A)(p)=0$.
At such a point, the tangent spaces are
\[
T\Gamma_A=\{(D^2A(p)\xi,\xi):\xi\in\R^2\},
\]
and
\[
T\Gamma_F=\{(D^2F(p)\xi,\xi):\xi\in\R^2\}.
\]
Their tangent-space intersection is parametrized by
$\ker D^2(F-A)(p)$.
Hence the intersection is transverse if and only if
$D^2(F-A)(p)$ is invertible.
If $D^2(F-A)(p)=0$, the two tangent spaces
coincide.
\end{proof}

\section{A simultaneous rotation with four intersections}

We specify all initial cutoff profiles. Let
\[
 u(x)=
 \begin{cases}e^{-1/x},&x>0,\\0,&x\leq0,\end{cases}.
\]
This standard flat function is used to construct smooth cutoff profiles that are exactly constant outside prescribed transition intervals; its derivatives of every order vanish at the origin. 
We set
\begin{equation}\label{eq:profiles}
 \kappa(R)=100\frac{u(7/4-R)}{u(7/4-R)+u(R-5/4)},\qquad
 m(R)=\frac{u(R-1/2)}{u(R-1/2)+u(1-R)}.
\end{equation}
The denominators never vanish. The vanishing of every derivative of
\(u\) at zero proves smoothness across the endpoints.
The function \(\kappa\) is \(100\) on \(R\leq5/4\), zero on
\(R\geq7/4\), and strictly decreasing between these values.
The function \(m\) is nondecreasing, zero on \(R\leq1/2\), and
one on \(R\geq1\).
For \(0<\rho<\pi/400\), define
\begin{align}
 \theta_\rho(r)&=
 m(r/\rho)\bigl(\pi/2-\rho\kappa(r/\rho)\bigr),\label{eq:theta}\\
 K_\rho(j,k)&=\int_0^{|j+k|}\theta_\rho(s)\dd s,\qquad
 \Phi_\rho=\varphi_{K_\rho}^1,\qquad L_\rho=\Phi_\rho(L).\label{eq:bend}
\end{align}
The angle \(\theta_\rho\) is nondecreasing and lies in \([0,\pi/2]\).
It is zero on \(r\leq\rho/2\) and \(\pi/2\) on
\(r\geq7\rho/4\).

\begin{lemma}\label{lem:rotation}
The function \(K_\rho\) is smooth on \(M\).
For \(r>0\), its time-one map $ \Phi_\rho$ rotates \(j\) and \(k\) simultaneously
about \(S/r\) through the angle \(\theta_\rho(r)\), and 
it is the identity on \(r\leq\rho/2\).
\end{lemma}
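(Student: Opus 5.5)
Three things need proof: smoothness of \(K_\rho\), the description of its flow for \(r>0\), and that the flow is the identity where \(r\le\rho/2\).

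\emph{Smoothness.} The only issue is the locus \(r=|j+k|=0\), where \(r\) itself is not smooth. Since \(\theta_\rho\) vanishes on \([0,\rho/2]\), the primitive \(G(r)=\int_0^r\theta_\rho(s)\dd s\) is smooth on \([0,\infty)\) and identically zero on \([0,\rho/2]\). Hence \(K_\rho=G\circ r\) vanishes on the open set \(\{r<\rho/2\}\), which contains the antidiagonal \(\{j=-k\}=\{r=0\}\). On \(\{r>0\}\) it is a composition of smooth maps, because \(r=|S|\) is smooth where \(S\ne0\). These two open sets cover \(M\), so \(K_\rho\) is smooth.

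\emph{Flow for \(r>0\).} Compute the Hamiltonian vector field. In the proof of Lemma~\ref{lem:coordinates} we showed that \(\iota_{X_b}\omega=\mathrm d(b\cdot S)\), where \(X_b=(b\times j,b\times k)\) and \(b\in\R^3\) is constant. For the function \(K_\rho\) we have
\[
 \mathrm dK_\rho=\theta_\rho(r)\,\mathrm dr=\frac{\theta_\rho(r)}{r}\,S\cdot\mathrm dS .
\]
At a fixed point, put \(b=\theta_\rho(r)S/r\). Then \(\mathrm dK_\rho\) and \(\mathrm d(b\cdot S)\) agree as covectors there. Since \(\omega\) is nondegenerate, the pointwise identity gives
\[
 X_{K_\rho}(j,k)=\bigl(b(j,k)\times j,\ b(j,k)\times k\bigr),\qquad b=\theta_\rho(r)\,S/r .
\]
Along this field \(S\) is conserved, because \(\dot S=b\times S=0\) as \(b\) is parallel to \(S\). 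Equivalently, \(b\cdot S\) Poisson-commutes with \(K_\rho\) by \(SO(3)\)-invariance of \(r\). So \(r\) and \(b\) are constant along flow lines. On each flow line the field is therefore the linear field \(v\mapsto b\times v\) with fixed \(b\), applied to both factors. Its time-one map is the rotation about \(b/|b|=S/r\) through the angle \(|b|=\theta_\rho(r)\), applied simultaneously to \(j\) and \(k\). When \(\theta_\rho(r)=0\) this is simply the identity.

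\emph{Identity on \(r\le\rho/2\).} On \(\{r<\rho/2\}\) the function \(K_\rho\) vanishes identically, so its Hamiltonian vector field vanishes there. Because \(r\) is conserved, no orbit enters or leaves this region, and \(\Phi_\rho\) is the identity on it. On the level \(r=\rho/2\) we have \(\theta_\rho=0\), so the rotation formula again gives the identity. Together these give the identity on \(\{r\le\rho/2\}\).

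The only delicate point is the smoothness at \(r=0\), and it is handled by the flatness of \(\theta_\rho\) near \(0\). The remaining steps are routine bookkeeping.
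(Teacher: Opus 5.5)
Your proposal is correct and follows essentially the same route as the paper. Smoothness comes from the vanishing of $\theta_\rho$ on $[0,\rho/2]$; the vector field $X_{K_\rho}=\theta_\rho(r)\,(b\times j,\,b\times k)$ with $b=S/r$ is identified through the sphere-form identity underlying $\iota_{X_b}\omega=\mathrm d(b\cdot S)$; and the conservation of $S$ then gives a rotation about a fixed axis at constant speed.
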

\begin{proof}
The function $\theta_\rho$ vanishes on \(r\leq\rho/2\); hence the nonsmoothness of
\(|j+k|\) at \(j+k=0\) causes no problem.
Away from the set $\{(j,k)|\;|j+k|<\rho/2\}$, \(K_\rho(j,k)\) is smooth, including at \(j=k\).
For \(b=S/r\) with $S=j+k$ and $r=|j+k|$, differentiation gives
\(\mathrm dK_\rho(\xi,\eta)=\theta_\rho(r)b\cdot(\xi+\eta)\) for any $\xi,\eta\in\mathbb R^3$.
Recall that the double cross product formula is $(b\times j)\times\xi=(b\cdot \xi)j-(j\cdot\xi)b$. 
By \eqref{eq:sphere-area-form}, the identity
\[
 j\cdot((b\times j)\times\xi)=b\cdot\xi,\qquad \xi\perp j,
\]
and its second-factor analogue show
\[
 X_{K_\rho}=\theta_\rho(r)(b\times j,b\times k).
\]
Along the flow of $X_{K_\rho}$, \(\dot S=\theta_\rho(r)b\times S=0\).
The axis and the angle speed are constant along each trajectory,
which gives the stated rotation.
\end{proof}

Set \(N_0=(0,0,1)\) and put
\begin{equation}\label{eq:target}
 P_\sigma=(\sigma N_0,-\sigma N_0),\quad \sigma=\pm1,\qquad
 L_*=\{j_y=0,\ k_x=0\}.
\end{equation}
Let \(e_x,e_y,e_z\) denote the standard coordinate vectors in \(\R^3\),
and let \(\Rot_{v,\theta}\) denote the orientation-preserving rotation about
the unit axis \(v\) through angle \(\theta\).  The torus \(L_*\) is the image
of \(L\) under
\begin{equation}\label{eq:Rot}
 \mathcal R=
 \Rot_{e_x,\pi/2}\times\Rot_{e_y,\pi/2}.
\end{equation}
These product rotations are Hamiltonian: a linear function
\(b\cdot j\) has Hamiltonian vector field \(b\times j\).

\begin{proposition}\label{prop:four}
For every sufficiently small \(\rho>0\),
\(L_\rho\cap L_*\) consists of exactly four points, two approaching each
\(P_\sigma\). At all four intersection points,
\[
r=|S|=|j+k|=r_\rho,
\]
where \(r_\rho\in(5\rho/4,7\rho/4)\) is the unique solution of
\begin{equation}\label{eq:root}
\rho\kappa(r/\rho)
=
\arctan\frac{r}{2\sqrt{1-r^2/2}}.
\end{equation}
Moreover,
\begin{equation}\label{eq:Rs-limit}
\frac{r_\rho}{\rho}\longrightarrow R_s
\qquad\text{as }\rho\to0^+,
\end{equation}
where
\[
\kappa(R_s)=\frac{R_s}{2},
\qquad
\frac54<R_s<\frac74.
\]
\end{proposition}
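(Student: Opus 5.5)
The plan is to characterize $L_\rho\cap L_*$ by pulling back through $\Phi_\rho$: a point $x=(j,k)$ lies in $L_\rho$ iff $\Phi_\rho^{-1}(x)\in L$. By Lemma~\ref{lem:rotation}, $S=j+k$ (hence $r$ and the axis $b=S/r$) is invariant, and $\Phi_\rho^{-1}$ rotates both factors about $b$ through $-\theta_\rho(r)$. Writing this with Rodrigues' formula, the two conditions $e_z\cdot\Rot_{b,-\theta}j=0$ and $e_z\cdot\Rot_{b,-\theta}k=0$ can be replaced by their sum and difference. Using $S=rb$ and $D:=j-k\perp S$, I expect them to reduce to
\[
 S_z=0,\qquad \cos\theta\,D_z=\sin\theta\,(b\times D)_z ,\qquad \theta=\theta_\rho(r).
\]
(On $r\le\rho/2$ one has $\theta=0$, and the conditions reduce to $x\in L$.)

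Next I parametrize $L_*$ by $j=(\cos a,0,\sin a)$ and $k=(0,\cos c,\sin c)$. The first condition gives $\sin c=-\sin a$, hence $\cos c=\epsilon\cos a$ with $\epsilon=\pm1$. Then
\[
 S=(\cos a,\cos c,0),\qquad r^2=2\cos^2a,\qquad D_z=2\sin a,\qquad r(b\times D)_z=-2\cos a\cos c .
\]
The second condition becomes $r\sin a\cos\theta=-\epsilon\cos^2a\,\sin\theta$.

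With this identity in hand, I split into cases by the value of $\theta$.
\begin{enumerate}
\item If $\theta=0$, the condition forces $\sin a=0$, so $r=\sqrt2$. But $\theta(\sqrt2)=\pi/2$, a contradiction.
\item If $\theta=\pi/2$ (i.e.\ $r\ge7\rho/4$), it forces $\cos a=0$, so $r=0$, again a contradiction.
\item If $0<\theta<\pi/2$, then $\cos a\neq0$ and $\sin a\ne0$. Write $\sigma=\operatorname{sign}\sin a$. Positivity of $\tan\theta$ forces $\epsilon=-\sigma$, and then
\[
 \cot\theta=\frac{r}{2\sqrt{1-r^2/2}},\qquad\text{i.e.}\qquad \tfrac\pi2-\theta_\rho(r)=\arctan\frac{r}{2\sqrt{1-r^2/2}} .
\]
\end{enumerate}

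On $\rho/2<r<\rho$ we have $\kappa=100$ and $m\le1$, so $\pi/2-\theta\ge100\rho$. The right-hand side, however, is at most about $r/2<\rho$ for small $\rho$, so there is no solution there. On $r\ge\rho$ we have $m=1$, and the equation is exactly \eqref{eq:root}. Its left side equals $100\rho$ on $[\rho,5\rho/4]$, decreases strictly on $(5\rho/4,7\rho/4)$, and vanishes beyond. Its right side is strictly increasing, positive, and $O(\rho)$ with slope near $1/2$. Comparing endpoint values gives a unique root $r_\rho\in(5\rho/4,7\rho/4)$.

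Given $r_\rho$, the solutions are counted as follows. The sign of $\cos a=\pm r_\rho/\sqrt2$ and $\sigma$ can be chosen freely; $\sin a=\sigma\sqrt{1-r_\rho^2/2}$, and $c$ is then fixed by $\sin c=-\sin a$, $\cos c=-\sigma\cos a$. This gives exactly four points. As $\rho\to0$ they tend to $(\sigma N_0,-\sigma N_0)=P_\sigma$, two for each $\sigma$.

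For \eqref{eq:Rs-limit}, put $R=r/\rho$ and divide \eqref{eq:root} by $\rho$. This gives $\kappa(R)=g_\rho(R)$, where $g_\rho(R)=\rho^{-1}\arctan\bigl(\rho R/(2\sqrt{1-\rho^2R^2/2})\bigr)$ converges to $R/2$ in $C^0$ on $[5/4,7/4]$. The function $\kappa(R)-R/2$ is strictly decreasing, from $100-5/8>0$ at $R=5/4$ to $-7/8<0$ at $R=7/4$. So it has a unique zero $R_s$ in $(5/4,7/4)$, and uniform convergence together with strict monotonicity forces $r_\rho/\rho\to R_s$.

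The main obstacle is the reduction to the two scalar conditions, especially the cancellation in the sum equation to exactly $S_z=0$, which uses $b\times S=0$ and $b\cdot S=r$. After that, I need to make sure no intersections escape in the regime $\rho/2<r<\rho$ where $m<1$; the bound $100\rho$ versus $\approx r/2$ handles this.
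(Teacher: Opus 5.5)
Your proposal is correct and follows essentially the same route as the paper. You pull back by $\Phi_\rho^{-1}$, use invariance of $S$ to get $S_z=0$, apply Rodrigues' formula to $j-k$ about $S/r$, reduce to $\pi/2-\theta_\rho(r)=\arctan\bigl(r/(2\sqrt{1-r^2/2})\bigr)$, and count the four sign choices by the same monotonicity argument. The differences are minor:
\begin{enumerate}
\item You split cases by the value of $\theta$ rather than of $r$. Your displayed identity degenerates to $0=0$ at $r=0$, so $P_\pm$ are excluded only by your parenthetical remark that $\Phi_\rho=\id$ there; case (i) should invoke that remark rather than the identity.
\item You get $r_\rho/\rho\to R_s$ from uniform convergence plus strict monotonicity. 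The paper's Taylor and mean-value argument additionally gives the rate $R_\rho=R_s+O(\rho^2)$, which the statement itself does not require.
\end{enumerate}
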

\begin{proof}
All points of \(L_\rho\) have \(S_z=0\), since this holds on \(L\)
and \(\Phi_\rho\) preserves \(S\).
An intersection therefore has
\[
 j=(a,0,z),\qquad k=(0,b,-z),\qquad
 a^2=b^2=r^2/2,\qquad z^2=1-r^2/2.
\]
Thus every point of \(L_\rho\cap L_*\) must be among these candidate
points in \(L_*\); in particular, \(r>\sqrt2\) is impossible.
 We now determine which of them actually lie in
\(L_\rho=\Phi_\rho(L)\). This is equivalent to requiring
\[
 \Phi_\rho^{-1}(j,k)\in L=\{j_z=k_z=0\}.
\]
Write
\[
 \Phi_\rho^{-1}(j,k)=(j',k'),\qquad d'=j'-k'.
\]
Since \(\Phi_\rho\) preserves \(S=j+k\), we have
\(j'+k'=S\), and the candidate points above satisfy \(S_z=0\). Hence
\[
 \Phi_\rho^{-1}(j,k)\in L
 \quad\Longleftrightarrow\quad
 j'_z=k'_z=0
 \quad\Longleftrightarrow\quad
 (d')_z=0.
\]
For \(r>0\), Lemma~\ref{lem:rotation} gives
\[
 d'=\Rot_{S/r,-\theta_\rho(r)}d,
 \qquad d=j-k.
\]
Thus, for \(r>0\), the intersection condition is exactly the vanishing
of the \(z\)-component of this inverse-rotated difference vector. The
case \(r=0\) is treated separately, since the axis \(S/r\) is then
undefined.

If \(r=0\), the only possibilities in \(L_*\) are \(P_\pm\).
Since \(r=0\leq\rho/2\), Lemma~\ref{lem:rotation} gives
\(\Phi_\rho^{-1}(P_\pm)=P_\pm\), while \(P_\pm\notin L\).
Hence \(P_\pm\notin L_\rho=\Phi_\rho(L)\).

If \(r=\sqrt2\), then
\[
 z=0,\qquad a^2=b^2=1,
\]
so
\[
 j=(a,0,0),\qquad k=(0,b,0),
\]
with \(a,b\in\{\pm1\}\). For sufficiently small \(\rho\),
\[
 \theta_\rho(r)=\frac{\pi}{2}.
\]
Set
\[
 S=j+k=(a,b,0),\qquad
 d=j-k=(a,-b,0),\qquad
 B=\frac{S}{r}.
\]
Since \(B\cdot d=0\), Rodrigues' formula gives
\[
 d'=\Rot_{B,-\pi/2}(d)=-B\times d.
\]
Hence
\[
 (d')_z
 =
 -\frac1r\bigl((a,b,0)\times(a,-b,0)\bigr)_z
 =
 \frac{2ab}{r}
 \neq0.
\]
By the criterion above, the point does not lie in \(L_\rho\). Hence
\(r=\sqrt2\) cannot occur at an intersection point of \(L_\rho\) and
\(L_*\).

For \(0<r<\sqrt2\), let
\[
 d=j-k=(a,-b,2z),\qquad B=\frac{S}{r}=\frac{(a,b,0)}{r}.
\]
Since \(B\cdot d=0\), Rodrigues' formula yields
\(d'= |d'|\cos\theta \cdot \frac{d}{|d|}-  |d'|\sin\theta\cdot\frac{B\times d}{|B\times d|}\). Hence, 
\[
 (d')_z
 =
 2z\cos\theta_\rho(r)
 +\frac{2ab}{r}\sin\theta_\rho(r).
\]
Thus the intersection criterion is
\[
 2z\cos\theta_\rho(r)
 +\frac{2ab}{r}\sin\theta_\rho(r)=0.
\]
At any solution with \(0<r<\sqrt2\), one necessarily has
\[
 0<\theta_\rho(r)<\pi/2.
\]
Indeed, \(\theta_\rho(r)=0\) would force \(z=0\), while
\(\theta_\rho(r)=\pi/2\) would force \(ab=0\), both impossible when
\(0<r<\sqrt2\). Hence the vanishing condition implies
\(\operatorname{sgn}z=-\operatorname{sgn}(ab)\). Since
\[
 |z|=\sqrt{1-r^2/2},\qquad |ab|=r^2/2,
\]
it reduces to
\[
 \sqrt{1-r^2/2}\,\cos\theta_\rho(r)
 =\frac r2\sin\theta_\rho(r).
\]
Therefore
\begin{equation}\label{eq:global-root}
 \pi/2-\theta_\rho(r)=
 \arctan\frac{r}{2\sqrt{1-r^2/2}}.
\end{equation}
The left side is nonincreasing and the right side strictly increasing.
At \(r=5\rho/4\), the left side equals \(100\rho\), exceeding the
right side for small \(\rho\); at \(r=7\rho/4\), the left side is
zero while the right side is positive. Thus the intermediate value
theorem gives a solution in \((5\rho/4,7\rho/4)\), and the opposite
monotonicities give uniqueness; they also exclude any solution outside
this interval. For \(r\in[5\rho/4,7\rho/4]\), we have \(r/\rho\ge1\), hence
\(m(r/\rho)=1\). Therefore, by the definition of \(\theta_\rho\),
\[
\frac{\pi}{2}-\theta_\rho(r)
=\rho\kappa(r/\rho).
\]
Thus \eqref{eq:global-root} becomes
\[
\rho\kappa(r/\rho)
=
\arctan\frac{r}{2\sqrt{1-r^2/2}}.
\]
Hence \eqref{eq:global-root} is exactly \eqref{eq:root}.
Each of the four sign choices
\(a,b=\pm r_\rho/\sqrt2\) gives one point; the sign of \(z\)
is already determined. The cases \(ab<0\) approach \(P_+\),
and the cases \(ab>0\) approach \(P_-\).

Set \(R_\rho=r_\rho/\rho\). Uniformly for
\(R\in[5/4,7/4]\), Taylor's formula gives
\begin{equation}\label{eq:root-expansion}
 \kappa(r/\rho)=\frac1\rho\arctan
 \frac{\rho R}{2\sqrt{1-\rho^2R^2/2}}
 =\frac R2+\frac{\rho^2R^3}{12}+O(\rho^4).
\end{equation}
The strictly decreasing function
\(h_*(R)=\kappa(R)-R/2\) has a unique zero \(R_s\)
within the interval, and \(h_*'(R_s)=\kappa'(R_s)-1/2<0\).
Equation \eqref{eq:root-expansion} and the mean value theorem
therefore give the quantitative estimates
\begin{equation}\label{eq:root-rate}
 R_\rho=R_s+O(\rho^2),\qquad
 r_\rho=\rho R_s+O(\rho^3).
\end{equation}
In particular this proves \eqref{eq:Rs-limit}.
\end{proof}

\section{Exact potentials and the scaled functions}

The rescaling viewpoint used in this section is related to the
cotangent-coordinate limiting analysis in \cite[\S2.2]{Jiao};
the source and target potentials below are specific to the present
Lagrangian-intersection construction and are derived explicitly.

For \(\sigma=\pm1\), use the canonical chart
\eqref{eq:nz} near \(P_\sigma\).

\begin{lemma}\label{lem:target}
Near \(P_\sigma\), \(L_*\) is exactly the local Lagrangian
graph over the momentum coordinates determined by
\begin{equation}\label{eq:target-graph}
 q=\nabla F_\sigma(p),
\end{equation}
where
\begin{equation}\label{eq:F}
 F_\sigma(p)=-\sigma I(p_1p_2),\qquad
 I(w)=\int_0^w
 \frac{\dd z}{\sqrt{2(1+\sqrt{1+z^2})}}.
\end{equation}
In particular,
\(F_\sigma(p)=-\sigma p_1p_2/2+O(|p|^6)\).
\end{lemma}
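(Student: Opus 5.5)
The plan is to compute $L_*=\{j_y=0,\ k_x=0\}$ directly in the Darboux chart of Lemma~\ref{lem:coordinates}, using the inverse formulas \eqref{eq:inverse} together with \eqref{eq:S-p}. Near $P_\sigma=(\sigma N_0,-\sigma N_0)$ we have $j-k\approx 2\sigma N_0$, so $N\approx\sigma N_0$. Hence $N_z$ has sign $\sigma$ and $q,p$ are small, and the chart with this $\sigma$ in \eqref{eq:nz} applies. Writing $j=aN+\tfrac12S$ and $k=-aN+\tfrac12S$ with $a=\sqrt{1-r^2/4}$, and using $S_x=-N_zp_2$, $S_y=N_zp_1$ from \eqref{eq:S-p}, the two defining equations become
\[
 j_y=aq_2+\tfrac12N_zp_1=0,\qquad k_x=-aq_1-\tfrac12N_zp_2=0 .
\]
So $L_*$ is locally the set
\[
 q=-t\,(p_2,p_1),\qquad t:=\frac{N_z}{2a}=\sigma\frac{|N_z|}{2a}.
\]
Here $t$ is itself a function of $(q,p)$ through $N_z=\sigma\sqrt{1-|q|^2}$ and $r^2=|p|^2-(q\cdot p)^2$ from \eqref{eq:canonical}.

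The key step is to solve this implicit relation for $t$ as a function of $p$ alone. Set $w=p_1p_2$. On the locus, $|q|^2=t^2|p|^2$ and $q\cdot p=-2tw$. Squaring $N_z=2at$ gives
\[
 1-t^2|p|^2=4t^2\Bigl(1-\tfrac14\bigl(|p|^2-4t^2w^2\bigr)\Bigr).
\]
The $|p|^2$ terms cancel, leaving $4w^2s^2+4s-1=0$ for $s=t^2$. The unique positive root is
\[
 s=\frac{\sqrt{1+w^2}-1}{2w^2}=\frac{1}{2(1+\sqrt{1+w^2})},
\]
and with the sign fixed by $N_z$ this gives $t=\sigma I'(w)$, where $I'(w)=1/\sqrt{2(1+\sqrt{1+w^2})}$. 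Then $q=-\sigma I'(p_1p_2)(p_2,p_1)=\nabla\bigl(-\sigma I(p_1p_2)\bigr)=\nabla F_\sigma(p)$, which is \eqref{eq:target-graph}.

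For the converse, take any small $p$ and define $q=\nabla F_\sigma(p)$, $N_z=\sigma\sqrt{1-|q|^2}$ and $a$ from $r^2$. Reversing the algebra gives $N_z^2=4a^2t^2$. Since $N_z$ and $t$ have the same sign, $N_z=2at$, and therefore $j_y=k_x=0$. This shows that $L_*$ near $P_\sigma$ coincides exactly with the graph, not merely contains it or is contained in it.

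The main obstacle I expect is bookkeeping rather than analysis. One point is ensuring that squaring introduces no spurious branch; this is handled by the sign of $N_z$ and by $a>0$. The other is checking that a neighborhood of $P_\sigma$ in $L_*$ maps onto a neighborhood of $p=0$, which follows since $p=0$, $q=0$ corresponds to $P_\sigma$. Finally, $I'(w)=\tfrac12-\tfrac{w^2}{16}+O(w^4)$ is even in $w$, so $I(w)=\tfrac w2+O(w^3)$. With $w=p_1p_2=O(|p|^2)$ this gives $F_\sigma(p)=-\sigma p_1p_2/2+O(|p|^6)$.
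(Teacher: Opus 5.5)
Your proposal is correct and follows the paper's proof essentially step for step. You rewrite $j_y=k_x=0$ via \eqref{eq:inverse} and \eqref{eq:S-p} as $q=-\tfrac{N_z}{2a}(p_2,p_1)$, reduce to $4t^2+4t^4(p_1p_2)^2=1$, and take the positive root. The paper ends by invoking the implicit function theorem for uniqueness of the local branch; your argument does not need it, because the quadratic in $t^2$ has a unique positive root (with the sign fixed by $N_z$), which already gives both inclusions directly.
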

\begin{proof}
Since
\[
L_*=\{j_y=0,\ k_x=0\},
\]
we rewrite these two defining equations in the canonical coordinates
\((q,p)\) of Lemma~\ref{lem:coordinates}. Using \eqref{eq:S-p} and \eqref{eq:inverse}, they become
\[
 q_2=-\frac{n_z}{2a}p_1,\qquad
 q_1=-\frac{n_z}{2a}p_2.
\]
Write \(T=|n_z|/(2a)>0\), so
\(q=-\sigma T(p_2,p_1)\).
Substitution into \(n_z^2=1-|q|^2\) and
\(a^2=1-\bigl(|p|^2-(q\cdot p)^2\bigr)/4\) gives
\[
 4T^2+4T^4(p_1p_2)^2=1.
\]
The positive solution is
\[
 T=\frac1{\sqrt{2(1+\sqrt{1+(p_1p_2)^2})}}.
\]
Thus \(q=\grad F_\sigma(p)\). Conversely, this expression satisfies
the two target equations for all sufficiently small \(p\).
Their derivative in \(q\) is invertible at \(q=p=0\), so the
implicit function theorem gives uniqueness of this local branch.
Since \(I'(w)=1/2+O(w^2)\), the expansion follows.
\end{proof}

Let \(\mathcal A=\{v\in\R^2:1<|v|<2\}\).  We write
\(E\Subset\mathcal A\) to mean that \(\overline E\) is a compact subset of
\(\mathcal A\).  In statements on compact subsets of \(\mathcal A\), all
derivative bounds are taken after fixing the subset.

\begin{lemma}\label{lem:source}
On every compact momentum subannulus of \(\mathcal A\), and for
small \(\rho>0\), the part of \(L_\rho\) near either \(P_\sigma\)
is the local Lagrangian graph \(q=\grad A_\rho(p)\) over the momentum coordinates, with the same radial
potential in both charts.
For \(s=|p|\), this potential is specified by
\begin{equation}\label{eq:A-implicit}
 A_\rho'(s)=-\sin\bigl(\rho\kappa(\xi/\rho)\bigr),\qquad
 s=\xi/\cos\bigl(\rho\kappa(\xi/\rho)\bigr),\qquad
 A_\rho(2\rho)=0.
\end{equation}
More precisely, for any fixed \(C>0\) and compact
\(E\Subset\mathcal A\), every point of \(L_\rho\) in the chart
with \(p=\rho v\), \(v\in E\), and \(|q|\leq C\rho\)
belongs to this graph.
The scaled potential
\begin{equation}\label{eq:a}
 a_\rho(v)=\rho^{-2}A_\rho(\rho|v|)
\end{equation}
converges in \(C^\infty\) on such compact sets to
\[
 a_0(v)=\int_{|v|}^2\kappa(R)\dd R.
\]
\end{lemma}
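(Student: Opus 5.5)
The plan is to make everything explicit through the cotangent model of Lemma~\ref{lem:coordinates}. The first step identifies $L$ there. By \eqref{eq:inverse}, $j_z=an_z+\tfrac12S_z$ and $k_z=-an_z+\tfrac12S_z$ with $a>0$. Hence $L=\{j_z=k_z=0\}$ becomes $\{n_z=0,\ S_z=0\}$.

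Write $n=(\cos\phi,\sin\phi,0)$ and $\eta=\eta_\phi e_\phi+t e_z$, where $e_\phi=(-\sin\phi,\cos\phi,0)$. One computes $S_z=(n\times\eta)_z=\eta_\phi$. So $L$ is the conormal bundle of the equator,
\[
 L=\{(n,t e_z): n_z=0,\ |t|<2\},\qquad r=|t|.
\]

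The second step rewrites $\Phi_\rho$ in these coordinates. By Lemma~\ref{lem:rotation} and the equivariance \eqref{eq:Psi-infinitesimal}, $\Phi_\rho$ rotates $(n,\eta)$ about $b=n\times\eta/|\eta|$ through the angle $\theta_\rho(|\eta|)$. Since $b\times n=\eta/|\eta|$ and $b\times\eta=-|\eta|n$, this is the unit-speed geodesic flow run for time $\theta_\rho(|\eta|)$. The image of the point with parameters $(\phi,t)$ is therefore
\[
 n'=\cos\theta\, n+\operatorname{sgn}(t)\sin\theta\, e_z,\qquad
 \eta'=|t|\bigl(\cos\theta\,\operatorname{sgn}(t)e_z-\sin\theta\, n\bigr),\qquad \theta=\theta_\rho(|t|).
\]
Near $P_\sigma$ we need $n'_z$ close to $\sigma$. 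This forces $\operatorname{sgn}(t)=\sigma$ and $\theta$ close to $\pi/2$, so $\theta=\pi/2-\rho\kappa(\xi/\rho)$ with $\xi=|t|$ in the range where $m=1$.

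Next I read off the chart coordinates. First, $q=(n'_x,n'_y)=\cos\theta\,n=\sin(\rho\kappa(\xi/\rho))\,n$. Second, \eqref{eq:p-from-eta} gives $p=\eta'_{\rm hor}-\eta'_zq/n'_z$. A short computation gives $p=-\xi\bigl(\sin\theta+\cos^2\theta/\sin\theta\bigr)n=-\bigl(\xi/\cos(\rho\kappa)\bigr)n$.

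Thus $p=-s\,n$ with $s=\xi/\cos(\rho\kappa(\xi/\rho))$, and $q=-\sin(\rho\kappa(\xi/\rho))\,p/|p|$. This is exactly $q=\nabla A_\rho(p)$ for the radial potential of \eqref{eq:A-implicit}. The formulas do not involve $\sigma$ beyond the sign choice, so the same $A_\rho$ serves both charts.

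For the graph property, note that $\xi\mapsto s$ is a perturbation of the identity with derivative $1+O(\rho)$. So for $\xi\in[\rho,2\rho]$ it is a diffeomorphism onto its image, and $A_\rho$ is well defined and smooth there. Together with $\phi\leftrightarrow-p/|p|$, this shows the $p$-projection is injective on the relevant piece.

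The \emph{uniqueness} assertion is where I expect the main care to be needed. Take a point of $L_\rho$ with $p=\rho v$, $v\in E$, and $|q|\le C\rho$. Then $|\cos\theta|\le C\rho$, which forces $\theta_\rho(\xi)\ge\pi/2-O(\rho)$. Since $|p|\approx\rho|v|$, we also get $\xi\approx\rho|v|\in(\rho,2\rho)$, so $m(\xi/\rho)=1$. Finally, $n'_z$ must have the sign of $\sigma$ to lie in this chart, and this selects $\operatorname{sgn}(t)=\sigma$. So the point lies on the branch parametrized above. I would check these inequalities with explicit constants, shrinking $\rho$ depending on $E$ and $C$; the fussy part is the sign bookkeeping of the two charts against the two signs of $t$.

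The last step is the scaling limit. Put $s=\rho S$ and $\xi=\rho\Xi$. The relation becomes $\Xi=S\cos(\rho\kappa(\Xi))$. This is smooth in $(\rho,S,\Xi)$ and has $\partial_\Xi$-derivative $1$ at $\rho=0$. By the implicit function theorem, $\Xi=\Xi(\rho,S)$ is smooth with $\Xi(0,S)=S$. Then
\[
 a_\rho(v)=\rho^{-1}\int_{|v|}^{2}\sin\bigl(\rho\kappa(\Xi(\rho,S))\bigr)\dd S,
\]
and the integrand $\rho^{-1}\sin(\rho\kappa(\Xi))$ is a smooth function of $(\rho,S)$ equal to $\kappa(S)$ at $\rho=0$. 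Hence $a_\rho\to a_0$ in $C^\infty$ on compact subsets of $\mathcal A$ (and even up to $\rho=0$ smoothly), which completes the plan.
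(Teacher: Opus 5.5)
Your proposal is correct and follows essentially the paper's route. You identify $L$ with the conormal of the equator, realize $\Phi_\rho$ as the geodesic flow for time $\theta_\rho(|\eta|)$, and read off $q=-\sin\delta\,p/|p|$ and $|p|=\xi/\cos\delta$. You then use monotonicity of $\xi\mapsto\xi/\cos\delta(\xi)$ for the single-sheet claim, and apply the implicit function theorem to $\Xi=S\cos(\rho\kappa(\Xi))$ for the scaling limit.

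The differences are cosmetic. You compute the time-$\theta$ image directly, where the paper flows to the fiber over $\sigma N_0$ and then back by $\delta$. You also get $\xi/\rho>1$ from $|q|=\cos\theta$ and $|p|=\xi/\sin\theta$, where the paper uses $r^2=|p|^2-(q\cdot p)^2$.
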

\begin{proof}
In the coordinates of Lemma~\ref{lem:coordinates}, the original
torus \(L\), for \(0<r<2\), satisfies
\[
 n_z=0,\qquad \eta=\pm rN_0.
\]
Indeed \(j_z=k_z=0\) is equivalent to \(n_z=S_z=0\);
then both \(S\) and \(n\) are horizontal and satisfy \(S\perp n\);
hence \(\eta=S\times n\) is vertical. 
Since the Hamiltonian equations for \(r=|S|\) are
\[
\dot j=\frac Sr\times j,\qquad
\dot k=\frac Sr\times k,
\]
we have
\[
\dot S=\frac Sr\times S=0.
\]
Hence \(S\) (and therefore \(r\)) is constant along the flow. The identity
\[
|j-k|^2+|S|^2=4
\]
then implies that \(|j-k|\) is also constant. Consequently, using
\(n=(j-k)/|j-k|\) and \(\eta=S\times n\), we obtain
\[
\dot n
=\frac{\dot j-\dot k}{|j-k|}
=\frac Sr\times\frac{j-k}{|j-k|}
=\frac Sr\times n
=\frac{\eta}{r}.
\]
Using \(S\perp n\),
\[
\dot\eta
=S\times\dot n
=\frac1r S\times(S\times n)
=-rn.
\]
Its solution through \((n_0,\eta_0)\) is
\begin{equation}\label{eq:geodesic}
 n(t)=\cos t\,n_0+\sin t\,\eta_0/r,\qquad
 \eta(t)=\cos t\,\eta_0-r\sin t\,n_0.
\end{equation}
Under the cotangent-bundle identification of Lemma~\ref{lem:coordinates}, let
\(\widetilde\phi_r^t\) denote the Hamiltonian flow of \(r=|\eta|\). Let \(N_0=(0,0,1)\) as before. 
The two branches of \(L\) are
\[
\mathcal B_\pm
=
\{(n_0,\pm rN_0):(n_0)_z=0,\ |n_0|=1,\ 0<r<2\}.
\]
At time \(t=\pi/2\),
\[
\widetilde\phi_r^{\pi/2}(n_0,\pm rN_0)
=
(\pm N_0,-rn_0).
\]
Hence, as \(n_0\) and \(r\) vary, the two branches are carried to the
cotangent fibers over \(N_0\) and \(-N_0\), respectively, inside
\(\{|\eta|<2\}\).

For \(r>\rho\), the angle in \(\Phi_\rho\) is
\(\pi/2-\delta(r)\), where \(\delta(r)=\rho\kappa(r/\rho)\).
Since \(r\) is preserved by its Hamiltonian flow, we have
\[
\widetilde\phi_r^{\pi/2-\delta(r)}
=
\widetilde\phi_r^{-\delta(r)}
\circ\widetilde\phi_r^{\pi/2}.
\]
Thus, rather than flowing from the original branch
\(\mathcal B_\sigma\) for time \(\pi/2-\delta(r)\), we may
first identify its time-\(\pi/2\) image with the cotangent
fiber over \(\sigma N_0\), and then flow backward for time
\(\delta(r)\). 
Starting at a point \((\sigma N_0,w)\) of this fiber,
where \(r=|w|\), and applying \eqref{eq:geodesic}
with \(t=-\delta(r)\), we obtain
\[
 q=-\sin\delta\,w/r,\quad n_z=\sigma\cos\delta,\quad
 \eta_h=\cos\delta\,w,\quad \eta_z=\sigma r\sin\delta.
\]
Here \(\eta_h\) denotes the two horizontal components.
The definition of canonical \(p\) (cf. \eqref{eq:p-from-eta}) yields
\begin{equation}\label{eq:source-graph}
 p=w/\cos\delta,\qquad
 |p|=r/\cos\delta,\qquad q=-\sin\delta\,p/|p|.
\end{equation}
This is exactly the radial gradient in \eqref{eq:A-implicit}.
On any fixed compact range of \(r/\rho>1\), the derivative of
\(r/\cos\delta(r)\) is
\[
 \sec\delta\bigl(1+r\tan\delta\,\kappa'(r/\rho)\bigr)=1+O(\rho^2),
\]
uniformly with all derivatives on each fixed compact subannulus
of \(1<r/\rho<2\). Hence this derivative is at least \(1/2\)
for sufficiently small \(\rho\), and the relation
\(
|p|=r/\cos\delta(r)
\)
uniquely determines $r$ as a smooth function of $|p|$
on the radial interval under consideration.

For the assertion about all branches in a bounded scaled chart,
\eqref{eq:canonical} gives
\[
 \frac r\rho=
 \sqrt{|v|^2-(q\cdot v)^2}.
\]
If \(v\in E\) and \(|q|\leq C\rho\), this quantity is greater
than \(1\) for small \(\rho\). Thus the angle is precisely
\(\pi/2-\rho\kappa(r/\rho)\), and the preceding parametrization
of \(L_\rho\) applies. The sign of $n_z$ determines the branch
$\sigma\in\{+1,-1\}$.
On each fixed branch, the strict monotonicity of
\[
G_\rho(r)=\frac{r}{\cos\delta(r)}
\]
implies that the equation
\[
|p|=G_\rho(r)
\]
uniquely determines
$r=G_\rho^{-1}(|p|)$.
Equation \eqref{eq:source-graph} then uniquely
determines
\[
q=-\sin\delta\bigl(G_\rho^{-1}(|p|)\bigr)
   \frac{p}{|p|}.
\]
Thus, for each fixed $\sigma$, the projection
of this branch of $L_\rho$ onto the momentum
coordinates is injective.
In particular, every point of $L_\rho$ in the
bounded scaled chart under consideration
belongs to the single momentum graph
$q=\nabla A_\rho(p)$, and no additional
local branches occur.

To prove convergence, recall that
\[
G_\rho(\xi)
=\frac{\xi}{\cos\bigl(\rho\kappa(\xi/\rho)\bigr)}
\]
is strictly increasing and hence admits a smooth
inverse on its image. For the scaled radial variable
$s=\rho R$, define
\[
\Xi_\rho(R)
:=\frac{1}{\rho}G_\rho^{-1}(\rho R).
\]
Thus the original radial variable is uniquely given by
$\xi=\rho\Xi_\rho(R)$. By the definition of $G_\rho$,
\[
G_\rho\bigl(\rho\Xi_\rho(R)\bigr)=\rho R,
\]
which is equivalent to
\[
\Xi_\rho(R)
=R\cos\bigl(\rho\kappa(\Xi_\rho(R))\bigr).
\]

Fix a compact interval $I\Subset(1,\infty)$ of the
scaled radial variable $R$. Since
$\kappa\in C^\infty(\mathbb R)$, define the smooth
function $F:\mathbb R^3\to\mathbb R$ by
\[
F(\Xi,R,\rho)
:=\Xi-R\cos\bigl(\rho\kappa(\Xi)\bigr).
\]
The preceding implicit equation is then
\[
F(\Xi_\rho(R),R,\rho)=0.
\]
At $\rho=0$, we have
\[
F(R,R,0)=0.
\]
Moreover,
\[
\frac{\partial F}{\partial\Xi}
=1+\rho R\sin\bigl(\rho\kappa(\Xi)\bigr)
  \kappa'(\Xi),
\]
so
\[
\frac{\partial F}{\partial\Xi}(R,R,0)=1\ne0.
\]
The implicit function theorem, applied uniformly
for $R\in I$, yields a unique smooth solution
$\Xi=\Xi(R,\rho)$ for sufficiently small
$|\rho|$, with $\Xi(R,0)=R$.
Consequently,
\[
\Xi_\rho\longrightarrow\operatorname{id}
\quad\text{in }C^\infty(I)
\]
as $\rho\to0$.  The function $F(\Xi,R,\rho)$ is even in \(\rho\), and differentiation
at \(\rho=0\) (or Taylor's formula) gives, on any fixed compact
radial interval, for each nonnegative integer \(k\),
\[
 \|\Xi_\rho-\operatorname{id}\|_{C^k}
 \le C_k\rho^2.
\]
To distinguish the two-variable radial potential
$a_\rho(v)$ from its one-variable radial profile,
define
\[
\widehat a_\rho(R)
:=\rho^{-2}A_\rho(\rho R),
\qquad
a_\rho(v)=\widehat a_\rho(|v|).
\]
By \eqref{eq:A-implicit} and the definition of
$\Xi_\rho$, we have
\[
\frac{d\widehat a_\rho(R)}{dR}
=-\frac{\sin(\rho\kappa(\Xi_\rho(R)))}{\rho}.
\]
Since $\widehat a_\rho(2)=0$, integration yields
\[
\widehat a_\rho(R)
=\int_R^2
\frac{\sin(\rho\kappa(\Xi_\rho(u)))}{\rho}
\dd u.
\]
Similarly, define
\[
\widehat a_0(R)=\int_R^2\kappa(u)\dd u,
\qquad
a_0(v)=\widehat a_0(|v|).
\]
Since $\Xi_\rho=\operatorname{id}+O(\rho^2)$
in $C^\infty$ on fixed compact radial intervals,
and $\sin(\rho z)/\rho=z+O(\rho^2)$ smoothly
with all derivatives on bounded sets, we obtain
\[
\|\widehat a_\rho-\widehat a_0\|_{C^k(I)}
\le C_{I,k}\rho^2
\]
for every fixed compact radial interval
$I\subset(1,2]$.
As $v\mapsto |v|$ is smooth away from the
origin, it follows that
\[
\|a_\rho-a_0\|_{C^k(E)}
\le C_{E,k}\rho^2
\]
for every fixed compact $E\Subset\mathcal A$,
where the norms are taken in the
two-dimensional $v$-variables.
This proves the stated smooth convergence
and records a uniform rate needed below.
\end{proof}

In the following we use a rescaling technique of the difference of the two potential functions to preserve critical-point
nondegeneracy and Morse index. The reason is that the rescaling freezes the shrinking intersection region into a fixed annulus and produces a nontrivial smooth limiting model, allowing the saddle-merging construction and its support estimates to be carried out uniformly before transferring the exact modification back to the original symplectic coordinates.

Define the scalar functions
\begin{equation}\label{eq:f-rho}
 f_{\rho,\sigma}(v)=\rho^{-2}F_\sigma(\rho v)-a_\rho(v).
\end{equation}
Their critical points describe the intersections of the two local
Lagrangian graphs over the momentum coordinates.  Indeed, for
\(D_{\rho,\sigma}(p)=F_\sigma(p)-A_\rho(p)\) we have
\(f_{\rho,\sigma}(v)=\rho^{-2}D_{\rho,\sigma}(\rho v)\), whence
\(\nabla_v f_{\rho,\sigma}=\rho^{-1}\nabla_pD_{\rho,\sigma}\)
and
\(D_v^2f_{\rho,\sigma}=D_p^2D_{\rho,\sigma}\)
at corresponding points.  
A critical point is nondegenerate if its Hessian is
invertible; in two variables a saddle is a nondegenerate critical
point with one positive and one negative Hessian eigenvalue.
These are the conventions in \cite[\S2]{Milnor}.

\begin{proposition}\label{prop:limit}
On compact subsets of \(\mathcal A\),
\begin{equation}\label{eq:f-zero}
 f_{\rho,\sigma}\longrightarrow
 f_{0,\sigma}(v)=h(|v|)-\sigma v_1v_2/2
 \quad\text{in }C^\infty,\qquad
 h(R)=-\int_R^2\kappa(u)\dd u.
\end{equation}
More precisely, for each compact \(E\Subset\mathcal A\)
and integer \(k\ge0\), a constant \(C_{E,k}>0\), independent
of sufficiently small \(\rho\), satisfies
\begin{equation}\label{eq:f-rate}
 \|f_{\rho,\sigma}-f_{0,\sigma}\|_{C^k(E)}
 \le C_{E,k}\rho^2,\qquad \sigma=\pm1.
\end{equation}
Each \(f_{0,\sigma}\) has exactly two critical points in
\(\mathcal A\), both saddles and both at radius \(R_s\).
The two actual intersections near \(P_\sigma\) converge, in
scaled momentum, to these two critical points.
All four intersections in Proposition~\ref{prop:four} are
transverse for small \(\rho\).
\end{proposition}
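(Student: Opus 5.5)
The plan has four steps: identify the limit, locate its critical points, match them with the actual intersections, and transfer nondegeneracy.

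\emph{Step 1: convergence and rate.} By definition, $f_{\rho,\sigma}(v)=\rho^{-2}F_\sigma(\rho v)-a_\rho(v)$. Lemma~\ref{lem:target} gives $F_\sigma(p)=-\sigma I(p_1p_2)$ with $I(w)=w/2+O(w^3)$ smooth and odd. Hence
\[
\rho^{-2}F_\sigma(\rho v)=-\sigma v_1v_2/2+\rho^{4}g(\rho^2 v_1v_2)\,(v_1v_2)^3
\]
for a smooth $g$. This term converges to $-\sigma v_1v_2/2$ in every $C^k(E)$ with error $O(\rho^4)$. Lemma~\ref{lem:source} already gives $\|a_\rho-a_0\|_{C^k(E)}\le C\rho^2$. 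Since $a_0(v)=\int_{|v|}^2\kappa=-h(|v|)$, we get $-a_0=h(|v|)$, which yields \eqref{eq:f-zero} and \eqref{eq:f-rate}.

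\emph{Step 2: critical points of the limit.} Use polar coordinates $v=R(\cos\phi,\sin\phi)$, so that
\[
f_{0,\sigma}=h(R)-\tfrac{\sigma}{4}R^2\sin2\phi .
\]
The critical equations are $\partial_\phi f=-\tfrac{\sigma}{2}R^2\cos2\phi=0$ and $\partial_R f=\kappa(R)-\tfrac{\sigma}{2}R\sin2\phi=0$.

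The first equation forces $\sin2\phi=\pm1$. Since $\kappa\ge0$, the case $\kappa(R)=-R/2$ is impossible, so we need $\sigma\sin2\phi=1$ and $\kappa(R)=R/2$. On $1<R<2$, the map $R\mapsto\kappa(R)-R/2$ is strictly decreasing from $99.5$ at $R=1$ to $-1$ at $R=2$; there $\kappa$ is constant, then strictly decreasing, then zero. So this equation has the unique root $R_s$. For each $\sigma$ this gives exactly two angles, $\phi$ and $\phi+\pi$, i.e.\ two antipodal points.

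For the Hessian at these points, use the polar form. The mixed term $\partial_R\partial_\phi f=-\sigma R\cos2\phi$ vanishes there. The radial entry is $h''(R_s)-\tfrac{\sigma}{2}\sin2\phi=\kappa'(R_s)-\tfrac12<0$. The angular entry, divided by $R^2$, is $\tfrac{\sigma}{R^2}R^2\sin2\phi=1>0$. So the Hessian is indefinite and nondegenerate, and both points are saddles.

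\emph{Step 3: matching with the actual intersections.} By Proposition~\ref{prop:four}, the intersections near $P_\sigma$ have $r=r_\rho$ with $r_\rho/\rho\to R_s$. The source graph of Lemma~\ref{lem:source} gives $|q|=\sin\delta=O(\rho)$ and $|p|=r/\cos\delta$, so $|p|/\rho\to R_s\in\mathcal A$. Therefore these points lie in a chart region $v\in E$, $|q|\le C\rho$, for a fixed compact $E\Subset\mathcal A$. By Lemma~\ref{lem:source} they lie on the graph $q=\nabla A_\rho(p)$, and for small $\rho$ (which makes $|p|,|q|$ small) they lie on the local graph of $L_*$ from Lemma~\ref{lem:target}. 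By Lemma~\ref{lem:graph}, these intersections are exactly the critical points of $f_{\rho,\sigma}$ in $E$.

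\emph{Step 4: transversality.} The $C^2$ convergence together with the nondegeneracy of the two limit saddles gives the following. Choose small disjoint disks $B_1,B_2$ around the two saddles. On $\overline E\setminus(B_1\cup B_2)$ we have $|\nabla f_{0,\sigma}|$ bounded below, so $f_{\rho,\sigma}$ has no critical points there. Inside each $B_i$, $f_{\rho,\sigma}$ has exactly one critical point, and it is nondegenerate of the same index; this follows from the inverse function theorem applied to $\nabla f$, or from degree together with convexity of the Hessian estimates. Proposition~\ref{prop:four} gives exactly two intersections near each $P_\sigma$, so each one converges to one of the two saddles. At corresponding points $D_v^2f_{\rho,\sigma}=D_p^2D_{\rho,\sigma}$ is invertible, so Lemma~\ref{lem:graph} gives transversality of all four intersections.

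The main obstacle is Step 3: ensuring that the actual intersection points fall inside the chart region where both graph descriptions are valid, so that Lemma~\ref{lem:graph} applies. This rests on the estimate $|q|=O(\rho)$ and on the radius limit $|p|/\rho\to R_s$.
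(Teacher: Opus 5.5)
Your proof is correct. For the rate estimate, the critical-point analysis of $f_{0,\sigma}$, and the final transversality step you follow the paper. Using polar coordinates instead of the rotated coordinates $x=(v_1-\sigma v_2)/\sqrt2$, $y=(v_1+\sigma v_2)/\sqrt2$ is cosmetic: once the angular entry is normalized by $R^2$, your diagonal polar Hessian gives exactly the paper's eigenvalues $1$ and $\kappa'(R_s)-\tfrac12$.

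The genuine difference is how you match the actual intersections with the limit saddles.
\begin{itemize}
\item The paper solves the intersection equation explicitly. Equating $-\sin\delta(r_\rho)\,p/|p|$ with $\nabla F_\sigma(p)=-\sigma I'(p_1p_2)(p_2,p_1)$ forces $p_2=\sigma p_1$ exactly. Together with $|p|/\rho\to R_s$, this pins each intersection near $(0,\pm R_s)$, and $C^2$ convergence then gives invertible Hessians.
\item You use only $C^2$ convergence and nondegeneracy of the limit. You take isolating disks around the two saddles, a positive lower bound for $|\nabla f_{0,\sigma}|$ on the rest of $E$, and exactly one nondegenerate critical point of $f_{\rho,\sigma}$ in each disk.
\end{itemize}

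The paper's route gives sharper information: the intersections lie exactly on the line $p_2=\sigma p_1$. It relies, however, on the specific radial form of $\nabla A_\rho$ and the specific shape of $\nabla F_\sigma$. Your route does not depend on those formulas and shows directly that the two intersections converge to \emph{distinct} saddles. Along the way it also proves that $f_{\rho,\sigma}$ has exactly two critical points in $E$. This is the same isolating-set mechanism the paper later uses in Lemma~\ref{lem:exact-correction}.

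In your Step~3, Proposition~\ref{prop:four} is still indispensable. It is what places every intersection near $P_\sigma$ in the region $v\in E$, $|q|\le 100\rho$, where Lemma~\ref{lem:source} and Lemma~\ref{lem:graph} apply. Its count of two points per $P_\sigma$, however, is not needed in your Step~4, since your disk argument recovers it.
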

\begin{proof}
Lemma~\ref{lem:source} gives
\(\|a_\rho-a_0\|_{C^k(E)}=O(\rho^2)\). Since the function $I$ in \eqref{eq:F} satisfies 
\(I(w)=w/2+O(w^3)\) smoothly near \(w=0\), the target term
satisfies, with all \(v\)-derivatives on \(E\),
\[
 \rho^{-2}F_\sigma(\rho v)
 =-\frac{\sigma}{2}v_1v_2+O(\rho^4).
\]
This, together with Lemma~\ref{lem:source},  proves \eqref{eq:f-rate} and
\eqref{eq:f-zero}. For each \(\sigma\in\{\pm1\}\), introduce the orthogonal coordinates
\[
x=\frac{v_1-\sigma v_2}{\sqrt2},
\qquad
y=\frac{v_1+\sigma v_2}{\sqrt2}.
\]
Then \(R=|v|=\sqrt{x^2+y^2}\) and
\[
-\frac{\sigma}{2}v_1v_2=\frac{x^2-y^2}{4}.
\]
Hence the limit function becomes
\begin{equation}\label{eq:planar-model}
g(x,y)=h(R)+\frac{x^2-y^2}{4}.
\end{equation}
Its critical equations are
\[
 \bigl(\kappa(R)/R+1/2\bigr)x=0,\qquad
 \bigl(\kappa(R)/R-1/2\bigr)y=0.
\]
Since \(\kappa(R)\ge0\), the factor
\(
\kappa(R)/R+1/2
\)
is strictly positive, so the first critical equation gives \(x=0\).
Since \(R>1\), we cannot also have \(y=0\). Hence the second equation
implies
\[
\kappa(R)=\frac R2.
\]
By the uniqueness of \(R_s\), we obtain \(R=R_s\), and therefore
\(y=\pm R_s\). Thus the only critical points are
\[
(x,y)=(0,\pm R_s).
\]
A direct computation shows that the  eigenvalues of the Hessian of the function $g(x,y)$ are \(1\) and
\(\kappa'(R_s)-1/2<0\).

By Proposition~\ref{prop:four}, every actual intersection has
\(r=|j+k|=r_\rho\), where \(r_\rho\) is the unique solution
of \eqref{eq:root}. By \eqref{eq:Rs-limit},
\(r_\rho/\rho\to R_s\). Hence \eqref{eq:source-graph} gives
\[
 \frac{|p|}{\rho}
 =\frac{r_\rho/\rho}{\cos\delta(r_\rho)}
 \longrightarrow R_s,
\]
since \(\delta(r_\rho)=\rho\kappa(r_\rho/\rho)\to0\).

We next determine the limiting directions. 
Equating the local graph equations for \(L_\rho\)
and \(L_*\), given by \eqref{eq:source-graph}
and \eqref{eq:target-graph}, respectively, and using
\eqref{eq:F}, we obtain

\[
-\sin\delta(r_\rho)\frac{p}{|p|}
=\nabla F_\sigma(p)
=-\sigma I'(p_1p_2)(p_2,p_1).
\]
Since \(p\neq0\), \(\sigma\in\{\pm1\}\),
\(\sin\delta(r_\rho)>0\), and \(I'(p_1p_2)>0\),
set
\[
a:=\frac{\sin\delta(r_\rho)}{|p|}>0,
\qquad
b:=I'(p_1p_2)>0.
\]
The component equations are
\[
ap_1=\sigma bp_2,
\qquad
ap_2=\sigma bp_1.
\]
Since \(p\neq0\), these equations imply
\(a^2=b^2\). 
As \(a,b>0\), we obtain \(a=b\), and hence
$p_2=\sigma p_1$.

Now set \(v=p/\rho\) and use the orthogonal coordinates
\[
 x=\frac{v_1-\sigma v_2}{\sqrt2},
 \qquad
 y=\frac{v_1+\sigma v_2}{\sqrt2}.
\]
The relation \(p_2=\sigma p_1\) gives \(x=0\), while
\(|v|\to R_s\) gives \(y\to\pm R_s\). 
Thus the two actual intersections near \(P_\sigma\)
converge, in these coordinates, to the two critical
points \((0,\pm R_s)\) of \eqref{eq:planar-model}.
At both critical points, the \(y\)-direction corresponds
to the negative eigenvalue of the Hessian.
 Their Hessians converge to the corresponding
invertible limiting Hessians by the \(C^2\) convergence
already established. Lemma~\ref{lem:graph} therefore proves
that all four intersections are transverse for sufficiently
small \(\rho\).

\end{proof}

\section{A relative modification of the planar functions}

For a smooth function \(f\), \(\Crit(f)\) denotes its critical set.  An
embedded regular level arc between two saddles means a smooth embedded arc in
their common level set whose only critical points are its two endpoints.  The
term is used here solely in this sense.
The next two lemmas provide its existence and the needed relative
modification.  The underlying local degeneration is the classical elliptic
umbilic recalled in the introduction \cite{Thom72,Arnold72Sing}; the first
lemma verifies that the two saddles of our specific limiting function satisfy
the required same-level arc condition.  Its level-set argument is closely
related to the recent implementation in \cite[\S3.1]{Jiao}, but we include
the details because the coefficients and the radial profile in the present
Lagrangian-intersection problem are different.

\begin{lemma}\label{lem:arc}
The two saddles of \(g\) in \eqref{eq:planar-model} are joined
by an embedded regular level arc contained in \(1<R<2\).
The arc extends smoothly and a short distance beyond both saddles,
still in the same level set.
It has a disk neighborhood compactly contained in this annulus
and containing no other critical points.
\end{lemma}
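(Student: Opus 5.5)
The plan is to work in polar coordinates \(x=R\cos\phi\), \(y=R\sin\phi\), where \eqref{eq:planar-model} becomes
\[
 g=G_\phi(R):=h(R)+\tfrac14R^2\cos2\phi .
\]
The two saddles \((0,\pm R_s)\) sit at \(\phi=\pm\pi/2\), \(R=R_s\). By the symmetry \(y\mapsto-y\) they share the critical value
\[
 c=h(R_s)-R_s^2/4 .
\]
I will build the arc as the radial graph \(R=R(\phi)\), \(\phi\in[-\pi/2,\pi/2]\), through the half-plane \(x\ge0\). It will be defined as the unique root in \((1,R_s]\) of \(G_\phi(R)=c\).

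\emph{Monotonicity.} We have
\[
 \partial_RG_\phi=\kappa(R)+\tfrac12R\cos2\phi\ \ge\ \kappa(R)-R/2 .
\]
By Proposition~\ref{prop:four}, the function \(\kappa(R)-R/2\) is strictly decreasing and vanishes only at \(R_s\). Hence \(\partial_RG_\phi>0\) on \((1,R_s)\) for every \(\phi\).

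\emph{Boundary values.} At the inner end, \(G_\phi(1)\le h(1)+1/4\). Since \(\kappa=100\) on \([1,5/4]\subset[1,R_s]\), we get
\[
 h(1)=h(R_s)-\int_1^{R_s}\kappa\le h(R_s)-25 ,
\]
so \(G_\phi(1)<c\). At the outer end, \(G_\phi(R_s)\ge c\), with equality exactly when \(\cos2\phi=-1\), i.e.\ \(\phi=\pm\pi/2\).

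\emph{Definition of the arc.} By the intermediate value theorem and strict monotonicity, there is a unique root \(R(\phi)\in(1,R_s]\). It satisfies \(R(\phi)<R_s\) for \(|\phi|<\pi/2\), and \(R(\pm\pi/2)=R_s\). On the open interval the implicit function theorem applies, because \(\partial_RG_\phi\ne0\) there. So \(R(\phi)\) is smooth, and the curve is an embedded arc in the level set \(\{g=c\}\). Along it, \(\nabla g\) has nonzero radial component, so the only critical points on the closed arc are the two endpoints. The arc lies in \(1<R\le R_s<7/4<2\).

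\emph{Behaviour at the endpoints.} I expect the main obstacle here, since the implicit function theorem degenerates at \(\phi=\pm\pi/2\). I would use the Morse lemma at each saddle (nondegenerate by Proposition~\ref{prop:limit}). It shows that near the saddle, \(\{g=c\}\) is the union of two smooth embedded curves crossing transversally, that is, four half-branches. The arc for \(\phi\to\pi/2^-\) must approach the saddle along one half-branch. This is because the arc is the part of the level set in the sector \(\{R<R_s,\ \phi<\pi/2\}\), which is one of the four local sectors: the quadratic part \(\tfrac12(\kappa'(R_s)-\tfrac12)(\delta R)^2+\tfrac12(R_s\,\delta\phi)^2\) shows the branches are \(\delta R\approx\pm\lambda\,\delta\phi\) with \(\lambda\ne0\). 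The half-branch continuing it smoothly through the saddle is the opposite one, namely \(\delta R\) of the same sign and \(\delta\phi>0\). By the reflection \(x\mapsto-x\), which preserves \(g\), this is the mirror image of the arc; in particular it stays in \(R<R_s\), in the same level set. The arc therefore extends smoothly a short distance past each saddle, and the same holds at \(\phi=-\pi/2\).

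\emph{Neighbourhood.} The extended arc is a compact embedded curve inside \(\{1<R<2\}\). By Proposition~\ref{prop:limit}, \(g\) has only the two critical points in \(\mathcal A\). A sufficiently thin tubular neighbourhood of the arc is an embedded disk, compactly contained in the annulus, and it contains no other critical points.
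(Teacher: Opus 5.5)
Your overall route is the paper's: polar coordinates, the bound $\partial_R g\ge\kappa(R)-R/2>0$ on $(1,R_s)$, a unique root $R(\phi)$ from the intermediate value and implicit function theorems, the Morse lemma at the endpoints, and a thin tubular neighbourhood. Your boundary comparison $h(1)\le h(R_s)-25$ is also fine; it only needs $R_s^2/4<99/4$.

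The gap is in the endpoint step, the one you flagged as delicate. At the saddle $\phi=\pi/2$, in your coordinates $(\delta\phi,\delta R)$, the two Morse branches are tangent to the lines $\delta R=\lambda\delta\phi$ and $\delta R=-\lambda\delta\phi$, with $\lambda>0$. The arc arrives in the quadrant $\delta\phi<0$, $\delta R<0$, so it lies on the branch $\delta R\approx\lambda\delta\phi$. The smooth continuation of that branch is the antipodal half-branch $\delta\phi>0$, $\delta R>0$, which enters $R>R_s$ on the side $x<0$.

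The mirror image of the arc under $x\mapsto-x$ is instead the half-branch $\delta\phi>0$, $\delta R<0$, which lies on the \emph{other} branch. The arc and its mirror image meet at a corner precisely because $\lambda\neq0$. Their union is the closed inner level curve $\{g=c\}\cap\{1<R\le R_s\}$, which has corners at both saddles. So your claims that the continuation ``is the mirror image of the arc'' and ``stays in $R<R_s$'' are false. As written, your description of the extension contradicts the smoothness you then assert.

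The lemma survives once the branch is identified correctly, and this is what the paper does. The Morse lemma alone shows the arc lies on a smooth branch of $\{g=c\}$ through each saddle, so you take a short piece of that same branch beyond the endpoint. That piece stays in $1<R<2$ because $R_s\in(5/4,7/4)$ is an interior radius. It contains no further critical point because the saddle is isolated.

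The correction matters later. Lemma~\ref{lem:merge} builds tubular coordinates $(s,z)$ along the whole extended arc and needs $A(s)=f_z(s,0)$ to have simple zeros exactly at $s=\pm1$. A cornered curve built from the mirror image would not support that construction.
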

\begin{proof}

Introducing polar coordinates
\[
x=R\cos\vartheta,\qquad
y=R\sin\vartheta,
\]
we can write
\[
g(R,\vartheta)
=h(R)+\frac{R^2}{4}\cos(2\vartheta),
\qquad
c_s=h(R_s)-\frac{R_s^2}{4}.
\]
Here $R_s$ is given in \eqref{eq:Rs-limit}. 
Since \(\kappa=100\) on \([1,5/4]\) and
\(\kappa\geq0\), we have
\[
h(1)
=-\int_1^2\kappa(u)\,du
\leq-\int_1^{5/4}100\,du
=-25.
\]
Consequently,
\[
g(1,\vartheta)
=h(1)+\frac14\cos(2\vartheta)
\leq-25+\frac14.
\]
On the other hand, since \(\kappa\) is nonincreasing
and \(\kappa(R_s)=R_s/2\), we obtain
\[
\begin{aligned}
c_s
&=-\int_{R_s}^2\kappa(u)\,du
  -\frac{R_s^2}{4}\\
&\geq-\frac{R_s}{2}(2-R_s)
  -\frac{R_s^2}{4}\\
&=-1+\frac{(2-R_s)^2}{4}
>-1.
\end{aligned}
\]
Thus, for every \(\vartheta\),
\[
g(1,\vartheta)
\leq-25+\frac14
<-1<c_s.
\]
For \(1<R<R_s\),
\[
 \partial_Rg\geq\kappa(R)-R/2>0.
\]
At \(R=R_s\), \(g\geq c_s\), with equality exactly in
the two \(y\)-axis directions. Hence in each direction
\(-\pi/2<\vartheta<\pi/2\) there is a unique radius
in \((1,R_s)\) where \(g=c_s\). 
By the implicit function theorem, the unique radial
solutions \(R=R(\vartheta)\) depend smoothly on
\(\vartheta\). Consequently, the curve
\[
\gamma(\vartheta)=
\bigl(R(\vartheta)\cos\vartheta,\,
R(\vartheta)\sin\vartheta\bigr),
\qquad
-\frac{\pi}{2}<\vartheta<\frac{\pi}{2},
\]
is a smooth embedded arc in the open right half-plane.
They tend to the two saddles at the endpoints.

\begin{figure}[htbp]
    \centering
    \includegraphics[width=0.55\textwidth]{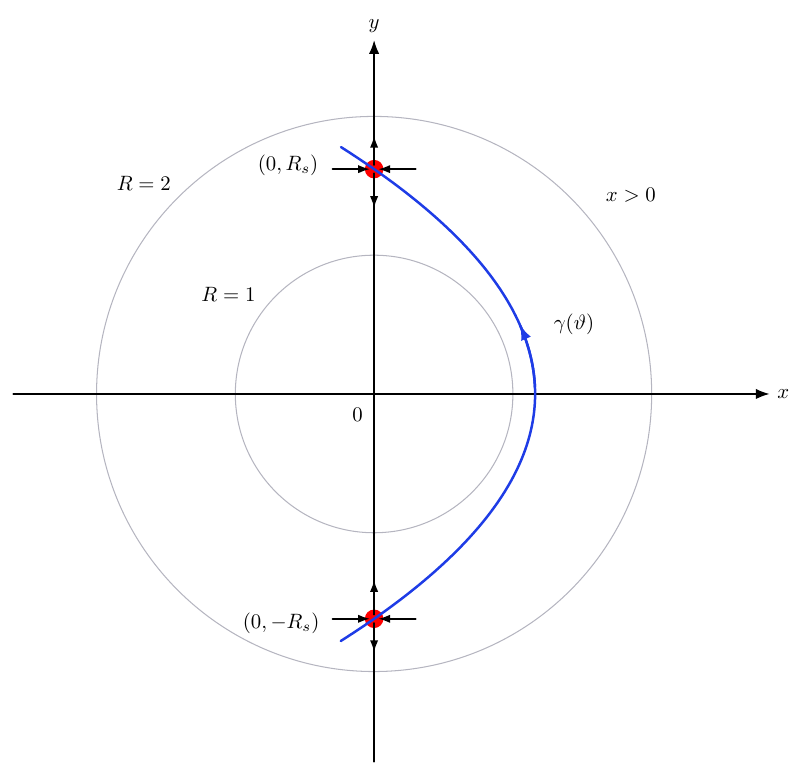}
    \caption{An embedded arc in the annular region $\mathcal A$ passing through the two saddles.}
    \label{fig:lemma51_arc}
\end{figure}

At each saddle, the Morse lemma
\cite[Lemma~2.2]{Milnor} expresses the level set as two transverse
smooth branches. The chosen arc approaches one branch and therefore
continues smoothly through that endpoint.
Choose short continuations in the annulus, and a sufficiently narrow
tubular neighborhood of the resulting compact embedded arc (see~Figure~\ref{fig:lemma51_arc}).
It is a disk, compactly contained in the annulus.
Proposition~\ref{prop:limit} excludes any further critical point.
\end{proof}

The fusion of critical points on a connected critical level has a
classical antecedent in Takens' work
\cite[Theorem~(2.7) and the proof of Proposition~(2.9)]{Takens68}.
Lemma~\ref{lem:merge} supplies the more specific local conclusion needed here:
a compactly supported replacement of two Morse saddles by one critical point
with the prescribed \(D_4^-\) cubic normal form
\cite{Thom72,Arnold72Sing}.  Takens' general fusion argument does not
assert this normal form.  A related recent construction appears in
\cite[Lemma~3.1]{Jiao}.  We include a proof that tracks the coordinate
normalization, support control, and exclusion of additional critical points.

\begin{lemma}[Relative combination of two saddles]\label{lem:merge}
Let \(f\) be smooth on a neighborhood of an embedded regular level
arc between two nondegenerate saddles of the same value \(c\).
Assume the arc has short smooth level-set extensions beyond both
endpoints. In a sufficiently narrow disk neighborhood \(D\) of the
extended arc, there is a smooth \(\widetilde f\) with
\[
 \supp(\widetilde f-f)\Subset D,\qquad
 \#\Crit(\widetilde f|_D)=1.
\]
At its only critical point, a smooth coordinate change expresses
\(\widetilde f\) as \(c+t(t^2-s^2)\).
\end{lemma}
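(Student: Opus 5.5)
The plan is to reduce to a one-variable problem along the arc, by writing \(f-c\) as \(s\) times a function in adapted coordinates, and then to interpolate to the unfolding model of the elliptic umbilic recalled in the introduction.

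\textbf{Straightening the arc.} Let \(\gamma\) be the extended arc. Using a tubular neighborhood, I choose coordinates \((t,s)\) on a neighborhood of \(\gamma\) with the following properties:
\begin{itemize}
\item \(\gamma=\{s=0,\ |t|\le 1+\epsilon\}\);
\item the saddles sit at \(t=\pm1\);
\item \(\{s=0\}\) lies in the level set \(\{f=c\}\) along the whole extended arc.
\end{itemize}
The last property is the point of using the extended arc: by the Morse lemma \cite[Lemma~2.2]{Milnor}, the branch of the level set containing \(\gamma\) is a smooth curve through each saddle. Hadamard's lemma in \(s\) then gives
\[
f-c=s\,\varphi(t,s).
\]
The gradient of \(f\) is nonzero on the open arc, so \(\varphi(t,0)\ne0\) for \(|t|\neq1\). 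At \(t=\pm1\) the Hessian of \(f\) is \(\begin{pmatrix}0&\varphi_t\\ \varphi_t&2\varphi_s\end{pmatrix}\), so nondegeneracy forces \(\varphi_t(\pm1,0)\ne0\). Thus \(\varphi(\cdot,0)\) has simple zeros exactly at \(\pm1\). Replacing \(s\) by \(-s\) if necessary, \(\varphi(t,0)=(t^2-1)e(t)\) with \(e>0\) on \(|t|\le1+\epsilon\).

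\textbf{Interpolation.} The target model is
\[
c+s\Bigl(t^2-\lambda-\tfrac{s^2}{3}\Bigr).
\]
For \(\lambda>0\) its critical points are exactly the two saddles \(s=0,\ t=\pm\sqrt\lambda\). For \(\lambda=0\) it is \(c+s(t^2-s^2/3)\), which after the linear change \(s\mapsto -\sqrt3\,t'\), \(t\mapsto s'\) (up to a positive rescaling of the output) is \(c+t'(t'^2-s'^2)\). I will construct \(\widetilde f=c+s\widetilde\varphi\) as follows.
\begin{itemize}
\item \(\widetilde\varphi=\varphi\) outside a compact set of \(\{|s|<\delta,\ |t|<1+\epsilon\}\).
\item \(\widetilde\varphi=t^2-s^2/3\) on a small disk around the origin.
\item In between, \(\widetilde\varphi(t,s)=\varphi(t,s)+\chi(t)\beta(s/\delta)\,\mu(t,s)\), where \(\mu\) is fixed so that \(\widetilde\varphi(t,0)\) is a positive multiple of \(t^2\) near \(t=0\) and stays nonzero for \(0<|t|\le1+\epsilon\).
\end{itemize}
Such a one-variable interpolation is possible because \(\varphi(t,0)\) has the same sign pattern as \(t^2-1\) and we only need to push its zeros together. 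Concretely, I deform \((t^2-1)e(t)\) to \(t^2\tilde e(t)\) with \(\tilde e>0\), keeping the function unchanged near \(|t|=1+\epsilon\) where it is already positive.

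\textbf{Counting critical points.} The critical equations of \(c+s\widetilde\varphi\) are
\[
s\,\widetilde\varphi_t=0,\qquad \widetilde\varphi+s\,\widetilde\varphi_s=0 .
\]
If \(s=0\), the second equation gives \(\widetilde\varphi(t,0)=0\), so \(t=0\) by construction. If \(s\ne0\), then \(\widetilde\varphi_t=0\) and \(\widetilde\varphi=-s\widetilde\varphi_s\). I would exclude such points as follows.
\begin{itemize}
\item On \(|s|<\delta\) with \(t\) away from the model disk, \(|\widetilde\varphi(t,0)|\) is bounded below uniformly, while \(|s\,\widetilde\varphi_s|\) must be shown small.
\item Inside the model disk the equations are explicit, and the only solution is the origin.
\item Outside the support of the modification, \(\widetilde f=f\); the hypothesis that \(D\) is a narrow disk around the extended arc containing only the two saddles excludes further critical points there.
\end{itemize}

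\textbf{Main obstacle.} The delicate term is \(s\,\partial_s[\beta(s/\delta)]\mu=(s/\delta)\beta'(s/\delta)\mu\). It is \(O(\|\mu\|)\), not \(O(\delta)\), so narrowing the strip alone does not kill it. The remedy I would try first is to make the correction \(\mu\) itself small on the transition layer. Near \(|t|\sim1\), the zeros are removed using a correction of size comparable to \(|\varphi(t,0)|\) only where that is small. Alternatively, I could choose \(\beta\) with \(|u\beta'(u)|\) controlled and use the lower bound \(|\widetilde\varphi(t,0)|\ge m\) together with \(\|\mu\|\le m/4\). If this estimate fails, the fallback is a two-step homotopy through the family \(\lambda\downarrow0\): first normalize \(f\) to the model with some \(\lambda_0>0\) via the Morse lemma and the straightened arc, and then vary \(\lambda\) inside a fixed compact region where the model is exact.
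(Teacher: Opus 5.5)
Your straightening, the division $f-c=s\varphi$, and the Hessian argument giving $\varphi(t,0)=(t^2-1)e(t)$ match Step~1 of the paper, with the roles of $s$ and $t$ exchanged. The critical-point count on $\{s=0\}$ and inside the exact model disk is also fine. But there is a genuine gap at exactly the step you flag: nothing excludes critical points with $s\neq0$ where the cutoffs $\beta(s/\delta)$ and $\chi(t)$ transition, and that exclusion is the content of the lemma.

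Your remedies do not close it.
\begin{itemize}
\item Above $t=0$ the correction must cancel $\varphi(0,0)=-e(0)$, so $\mu(0,0)=e(0)$. Meanwhile $\widetilde\varphi(t,0)=t^2\tilde e(t)$ has no positive lower bound there. So a bound $\|\mu\|\le m/4$ with $m\le|\widetilde\varphi(t,0)|$ is impossible precisely where the saddles merge.
\item Shrinking $\delta$ does not help, because $u\beta'(u)$ is scale invariant and $\beta+u\beta'=(u\beta)'$ must be negative somewhere.
\item At the ends, $\chi$ must drop to zero inside the short extension $1<|t|<1+\epsilon$, where $\varphi(t,0)=(t^2-1)e(t)=O(\epsilon)$. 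There the $s$-equation $\varphi+s\varphi_s+\chi\bigl[(\beta+u\beta')\mu+\beta s\mu_s\bigr]=0$ has no dominant term. At the same time $\widetilde\varphi_t$ contains $\beta\chi'\mu$ with $\chi'$ of order $1/\epsilon$. So critical points near the corners of the support are not ruled out by anything you wrote.
\item The region between your small model disk and the edge of the strip near $t=0$ is left unanalyzed.
\end{itemize}

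Your fallback points toward the paper's route but omits both of its decisive ingredients.

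\textbf{Global normalization.} The Morse lemma normalizes $f$ only near each saddle. The paper instead proves an exact template $t(t^2+1-s^2)$ on a whole tube around the extended arc; in the paper's convention $s$ runs along the arc and $t$ is transverse. It does this in three moves:
\begin{enumerate}
\item reparametrize the arc so the saddles sit at $s=\pm1$ while the chart reaches $|s|\le Q+2$, with $Q^2>4+C_\beta$;
\item rescale the transverse variable by $c_0(s)=A(s)/(1-s^2)>0$, which gives $t(1-s^2)+t^2b(s,t)$;
\item run Moser's path method to $t(t^2+1-s^2)$, using a vector field vanishing on $\{t=0\}$. This field is glued from $(-tB/A_\lambda,0)$ near the saddles and $(0,-t^2B/\partial_tf_\lambda)$ in the middle, and a Gronwall estimate keeps trajectories in the tube.
\end{enumerate}

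\textbf{Cutoff design.} Even in the exact model, ``varying $\lambda$ inside a compact region'' needs a specific cutoff. The paper takes $F_1=t\bigl(t^2+1-s^2-\alpha(s)\beta(t)\bigr)$, which has three features.
\begin{itemize}
\item The correction is constant along the arc on the long core $|s|\le Q$, so $\partial_sF_1=-2st$ forces $s=0$ or $t=0$.
\item The transverse cutoff is monotone, so $t\beta'\le0$ and hence $\partial_tF_1(0,t)\ge3t^2+1-\beta(t)>0$ for $t\neq0$.
\item The along-arc cutoff lives on $Q\le|s|\le Q+1$. There $\partial_tF_1\le3\eps^2+1-Q^2+C_\beta<0$ regardless of $\alpha'$, because $\partial_t$ never differentiates $\alpha$.
\end{itemize}

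These exact sign mechanisms, not smallness estimates, are what defeat the obstacle you identified. Without them, and without the global normalization that makes them available, the proposal does not establish $\#\Crit(\widetilde f|_D)=1$.
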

\begin{proof}
\medskip
\noindent\textit{Step 1: Choose coordinates adapted to the common level arc and put the function into a preliminary normal form.}
We first arrange that the connecting level arc is the coordinate axis and that the two saddles occur at fixed parameter values.
Subtract the common critical value \(c\) from the function $f$, so that the connecting arc lies in the zero level set. Before fixing the longitudinal coordinate \(s\) along the extended arc, choose an even smooth cutoff profile \(\beta_0:\mathbb R\to[0,1]\), equal to one on \([-1/2,1/2]\), zero outside \((-1,1)\), and nonincreasing on $[0,\infty)$. Define
\[
C_\beta=\sup_{u\in\mathbb R}
|\beta_0(u)+u\beta_0'(u)|<\infty,
\]
and fix \(Q>1\) so that \(Q^2>4+C_\beta\).
As the regular level arc continues for a
short positive distance beyond each saddle, we may reparameterize this
\emph{fixed geometric extended arc} by an interval containing
\([-Q-2,Q+2]\), taking the saddles to \(s=-1\) and \(s=1\).
This change only stretches the parameter along the same embedded arc;
it does not require any further geometric continuation of the level set.
Choose a transverse coordinate \(z\) so that the arc is \(z=0\).
The coordinate can be obtained by taking a smooth unit normal to
the embedded arc and using \(\gamma(s)+z\nu(s)\);
compactness and the inverse function theorem give an embedded
chart when its width is small.
The identity \(f(s,0)=0\) gives
\[
 f(s,z)=z A(s)+z^2b_0(s,z),\qquad A(s)=f_z(s,0).
\]
For example, the first division follows by integrating \(f_z(s,uz)\)
over \(0\leq u\leq1\), and a second such integration gives the
quadratic remainder.
Regularity away from the two saddles, including their short
extensions, gives \(A(s)\ne0\) there.  Reverse \(z\) if necessary
so that \(A(s)>0\) between the saddles.  At either saddle,
\[
 D^2f(s,0)=
 \begin{pmatrix}0&A'(s)\\A'(s)&2b_0(s,0)\end{pmatrix}.
\]
Nondegeneracy implies \(A'(\pm1)\neq0\).
The simple zeros \(A(\pm1)=0\) and the sign of \(A\) on the
middle interval imply that \(A'(-1)>0\) and \(A'(1)<0\).
Since \(A\) has no other zeros on the chosen extended arc,
\(c_0(s)=A(s)/(1-s^2)\) extends smoothly and stays strictly
positive throughout the whole chosen parameter interval (also for
\(|s|>1\), where both numerator and denominator are negative).
With \(t=c_0(s)z\), the expression becomes
\[
 f(s,t)=t(1-s^2)+t^2b(s,t).
\]

\medskip
\noindent\textit{Step 2: Remove the remaining quadratic error by a smooth change of coordinates and obtain the standard two-saddle template.}
We use Moser's path method, constructing a smooth homotopy of functions and a time-dependent vector field whose flow realizes the required change of coordinates.
We now show that the preceding expression is smoothly equivalent, on a sufficiently narrow tube, to \(t(t^2+1-s^2)\).
For \(0\leq\lambda\leq1\), put
\[
 f_\lambda=t(1-s^2)+(1-\lambda)t^2b(s,t)+\lambda t^3.
\]
Set \(B=t-b(s,t)\), so \(\partial_\lambda f_\lambda=t^2B\).
On the regions \(|s|\ge 1/2\), write
\[
\partial_s f_\lambda=tA_\lambda,
\qquad
A_\lambda=-2s+(1-\lambda)t b_s.
\]
Since \(b_s\) is bounded on a fixed compact coordinate neighborhood, say \(|b_s|\le M\), we have
\[
|A_\lambda|
\ge 2|s|-|t|M
\ge 1-|t|M.
\]
Thus, by choosing the tube sufficiently narrow, we obtain
\(
|A_\lambda|\ge\frac12
\)
uniformly for \(0\le\lambda\le1\). There the smooth vector field
\[
 X_\lambda=(-tB/A_\lambda,0)
\]
solves \(\mathrm df_\lambda(X_\lambda)=-t^2B\).
In a middle region \(|s|<3/4\), \(\partial_tf_\lambda\) is
bounded away from zero on a narrow tube, and the vector field
\[
 X_\lambda=(0,-t^2B/\partial_tf_\lambda)
\]
is smooth. Choose the tube so that the first formula is valid on
\(|s|\geq1/2\) and the second on \(|s|\leq3/4\). On the overlap both
vector fields solve the same scalar equation
\(\mathrm df_\lambda(X)=-t^2B\). A partition of unity in \(s\),
subordinate to these two regions, therefore glues them to a smooth
\(X_\lambda\) satisfying the equation everywhere; see Figure~\ref{fig:moser-vector-field}. Since each local
solution vanishes on \(t=0\), so does the glued field.

\begin{figure}[htbp]
    \centering
    \includegraphics[width=0.58\textwidth]{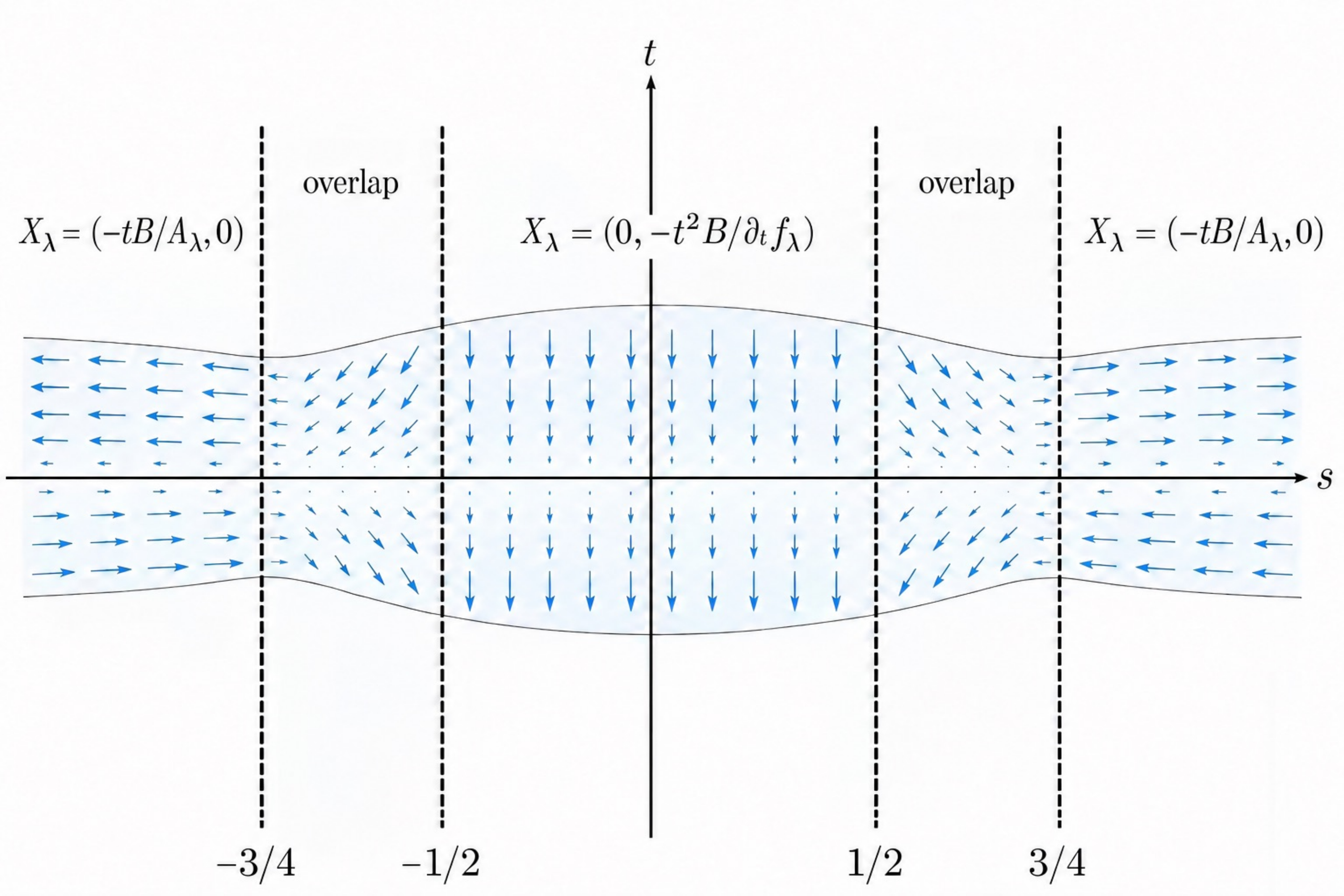}
    \caption{A schematic illustration of the Moser
    vector field $X_\lambda$.}
    \label{fig:moser-vector-field}
\end{figure}

Fix first a compact \(s\)-interval strictly inside the extended
arc, together with a slightly larger one on which the vector fields
above are defined. After these longitudinal intervals are fixed, choose a tube of positive width 
on which the absolute values of the denominators $A_\lambda$ and $\partial_tf_\lambda$ are 
uniformly bounded away from zero for all $0\leq \lambda\leq 1$.
The resulting constant \(C_0\) in
\(|X_\lambda(s,t)|\le C_0|t|\) depends on these fixed
coordinate data, not on the subsequently chosen initial tube width. 
The differential inequality for the transverse coordinate, together with Gronwall's inequality, gives
\(|t(\lambda)|\le e^{C_0}|t(0)|\); the displacement in
\(s\) is at most \(C_0e^{C_0}|t(0)|\) due to $|s'(\tau)|\leq C_0|t(\tau)|$.
Choosing the initial width sufficiently small now ensures that
trajectories stay in the larger tube for \(0\le\lambda\le1\).
If \(\phi_\lambda\) denotes the flow time-dependent vector field $X_\lambda$, then
\[
 \frac{\mathrm d}{\mathrm d\lambda}
  (f_\lambda\circ\phi_\lambda)
 = (\partial_\lambda f_\lambda
    +\mathrm df_\lambda(X_\lambda))\circ\phi_\lambda=0,
 \qquad f_1\circ\phi_1=f_0.
\]
Thus, using \(\phi_1\) as the new coordinate map, the original
function has the standard form
\begin{equation}\label{eq:template}
 t(t^2+1-s^2).
\end{equation}

\medskip
\noindent\textit{Step 3: Perform a compactly supported deformation of the template that merges the two saddles into one cubic critical point, while creating no additional critical points.}
We use a compactly supported version of the classical \(D_4^-\) unfolding from Thom--Arnol'd singularity theory \cite{Thom72,Arnold72Sing}, in the spirit of Takens' critical-point fusion construction \cite{Takens68}. The cutoff parameters are chosen to prevent the appearance of additional critical points. 
We deform \eqref{eq:template} only inside a compact coordinate rectangle and keep it unchanged near the boundary.
Use the profile \(\beta_0\), the bound \(C_\beta\), and the
number \(Q\), all fixed before the coordinate normalization in
Step~1.  Now choose \(0<\eps<1\) so small that
\(3\eps^2+1-Q^2+C_\beta<0\) and the closed rectangle \(|s|\leq Q+1,\ |t|\leq\eps\) is
contained in the normalized chart. Set \(\beta(t)=\beta_0(t/\eps)\),
and choose \(0\leq\alpha\leq1\), equal to one on \(|s|\leq Q\)
and zero on \(|s|\geq Q+1\).
For \(0\leq b\leq1\), define
\begin{equation}\label{eq:Fb}
 F_b(s,t)=t\bigl(t^2+1-s^2-b\alpha(s)\beta(t)\bigr).
\end{equation}
Thus \(F_0\) is exactly the template \eqref{eq:template}, and the deformation is supported in the interior of the coordinate rectangle.
The two critical-point equations are
\[
 \begin{aligned}
  \partial_sF_b&=-2st-bt\alpha'(s)\beta(t)=0,\\
  \partial_tF_b&=3t^2+1-s^2
           -b\alpha(s)\bigl(\beta(t)+t\beta'(t)\bigr)=0.
 \end{aligned}
\]
In the core \(|s|\leq Q\) the first equation is
\(\partial_sF_b=-2st\)=0.  If \(t=0\), the second reduces
to \(s^2=1-b\), giving precisely the two saddles for
\(b<1\) (with \(s=\pm1\) when \(b=0\)) and the origin
when \(b=1\).  If \(t\ne0\), the first equation forces
\(s=0\).  Since \(t\beta'(t)\le0\), we then have
\[
 \partial_tF_b(0,t)
 =3t^2+1-b(\beta(t)+t\beta'(t))
 \ge3t^2+1-b\beta(t)>0\qquad(t\ne0),
\]
so there is no such critical point.  In the remaining horizontal
transition region \(|s|>Q\), \(|t|\le\eps\), one has
\[
 \partial_tF_b
 \le 3\eps^2+1-Q^2+C_\beta<0,
\]
regardless of the derivative of \(\alpha\).
Thus no critical point occurs there. For \(|t|\geq\eps\), the cutoff
\(\beta\) vanishes, so the template is unchanged; its only critical points
are \((\pm1,0)\), which lie in the already treated region
\(|s|\leq Q\). Hence \(F_1\) has exactly one critical point, at the
origin, with local expression \(t(t^2-s^2)\).

\medskip
\noindent\textit{Step 4: Transport the local deformation back to the original disk and verify the relative support condition.}
Finally, pull \(F_1\) back through the coordinate changes constructed above, restore the critical value \(c\), and leave \(f\) unchanged outside the modification region.
Transporting \(F_1\) back to \(D\), adding \(c\), and extending
by \(f\) gives \(\widetilde f\).
Because the deformation from \(F_0\) to \(F_1\) is supported in the
interior of the normalized rectangle, this construction gives
\(\supp(\widetilde f-f)\Subset D\).
All choices of profiles and the required \(s\)-interval preceded
the final choice of \(\eps\), so the width choices are consistent.
The preceding critical-point analysis shows that
\(\#\Crit(\widetilde f|_D)=1\), and at that point the local form is
\(c+t(t^2-s^2)\), as required.
\end{proof}

\section{Exact correction and Hamiltonian realization}

The exact-correction step below is adapted from the cutoff interpolation
used in \cite[\S4]{Jiao}.  Here it is applied directly to the difference
of the two Lagrangian graph potentials.

For each \(\sigma\in\{\pm1\}\), the orthogonal change of variables in Proposition~\ref{prop:limit} transforms \(f_{0,\sigma}\) into the planar model \(g\) considered in Lemma~\ref{lem:arc}. We may therefore apply Lemmas~\ref{lem:arc} and \ref{lem:merge} to each \(f_{0,\sigma}\).
This gives disks and scalar functions with
\begin{equation}\label{eq:disks}
 \overline{D_\sigma}\subset D_{1,\sigma},\qquad
 \overline{D_{1,\sigma}}\subset\mathcal A,\qquad
 \supp(\widetilde f_{0,\sigma}-f_{0,\sigma})\Subset D_\sigma.
\end{equation}
Here the disks \(D_\sigma\) contain both limit saddles; their closures
can be chosen with smooth boundary. Each \(\widetilde f_{0,\sigma}\)
has exactly one critical point in \(D_\sigma\), of the cubic form
in Lemma~\ref{lem:merge}.
Take \(\chi_\sigma\in C_c^\infty(D_{1,\sigma},[0,1])\),
equal to one on a neighborhood of \(\overline{D_\sigma}\), and put
\begin{equation}\label{eq:hat}
 \widehat f_{\rho,\sigma}
 =f_{\rho,\sigma}
 +\chi_\sigma(f_{0,\sigma}-f_{\rho,\sigma})
 +(\widetilde f_{0,\sigma}-f_{0,\sigma}).
\end{equation}
The final parenthesis is extended by zero outside its support.

\begin{lemma}\label{lem:exact-correction}
For sufficiently small \(\rho>0\), \(\widehat f_{\rho,\sigma}\)
has exactly one critical point in \(D_{1,\sigma}\).
It equals \(\widetilde f_{0,\sigma}\) on \(D_\sigma\), and
\[
 d_{\rho,\sigma}:=
 f_{\rho,\sigma}-\widehat f_{\rho,\sigma}
 \in C_c^\infty(D_{1,\sigma}).
\]
The functions \(a_\rho\) and \(d_{\rho,\sigma}\), with every
derivative on the relevant compact sets, are bounded uniformly
as \(\rho\to0\).
\end{lemma}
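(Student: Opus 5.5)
The plan is to read off the structure of \(\widehat f_{\rho,\sigma}\) region by region from \eqref{eq:hat}, using \eqref{eq:disks} and the uniform rate \eqref{eq:f-rate}. On \(D_\sigma\) we have \(\chi_\sigma=1\), so \eqref{eq:hat} collapses to \(f_{0,\sigma}+(\widetilde f_{0,\sigma}-f_{0,\sigma})=\widetilde f_{0,\sigma}\); this gives the second assertion at once. For the support claim, write
\[
 d_{\rho,\sigma}=f_{\rho,\sigma}-\widehat f_{\rho,\sigma}
 =\chi_\sigma(f_{\rho,\sigma}-f_{0,\sigma})-(\widetilde f_{0,\sigma}-f_{0,\sigma}).
\]
Both terms are smooth and compactly supported in \(D_{1,\sigma}\): the first because \(\chi_\sigma\in C_c^\infty(D_{1,\sigma})\) and \(f_{\rho,\sigma}\) is smooth on a neighborhood of \(\overline{D_{1,\sigma}}\subset\mathcal A\) for small \(\rho\) (Lemma~\ref{lem:source}), the second by \eqref{eq:disks}. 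The uniform bounds then follow. Every \(C^k\) norm of the first term is \(O(\rho^2)\) by \eqref{eq:f-rate} on the compact set \(\supp\chi_\sigma\). The second term is independent of \(\rho\). For \(a_\rho\), the bounds come from the \(C^\infty\) convergence in Lemma~\ref{lem:source} on a fixed compact subannulus containing \(\overline{D_{1,\sigma}}\).

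For the critical point count, split \(D_{1,\sigma}\) into \(D_\sigma\) and \(K:=\overline{D_{1,\sigma}}\setminus D_\sigma\). On \(D_\sigma\) the function equals \(\widetilde f_{0,\sigma}\), so Lemma~\ref{lem:merge} gives exactly one critical point there, namely the cubic one, independent of \(\rho\). On \(K\), which is outside \(\supp(\widetilde f_{0,\sigma}-f_{0,\sigma})\), we have \(\widehat f_{\rho,\sigma}=f_{\rho,\sigma}+\chi_\sigma(f_{0,\sigma}-f_{\rho,\sigma})\). Hence
\[
 \|\widehat f_{\rho,\sigma}-f_{0,\sigma}\|_{C^1(K)}
 \le\|(1-\chi_\sigma)\|_{C^1}\,\|f_{\rho,\sigma}-f_{0,\sigma}\|_{C^1(K)}\le C\rho^2 .
\]
By Proposition~\ref{prop:limit}, the only critical points of \(f_{0,\sigma}\) in \(\mathcal A\) are the two saddles, and these lie in \(D_\sigma\), in fact in its interior away from \(\partial D_\sigma\). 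So \(|\nabla f_{0,\sigma}|\ge c_0>0\) on \(K\), which is compact. For small \(\rho\) this gives \(|\nabla\widehat f_{\rho,\sigma}|\ge c_0/2\) on \(K\), and therefore \(K\) contains no critical points.

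The main obstacle is the boundary bookkeeping. We need \(|\nabla f_{0,\sigma}|\) bounded below on all of \(K\), including on \(\partial D_\sigma\). For this we use that the saddles lie in the open disk \(D_\sigma\) with \(\overline{D_\sigma}\) compact in \(\mathcal A\), so that \(K\) is a compact set free of critical points of \(f_{0,\sigma}\). A second point to check is that \(\widetilde f_{0,\sigma}\) has no critical points near \(\partial D_\sigma\) from the inside. This holds because \(\widetilde f_{0,\sigma}=f_{0,\sigma}\) near \(\partial D_\sigma\), since the support of the modification is \(\Subset D_\sigma\). With these in place the uniform \(C^1\) estimate on \(K\) closes the argument.
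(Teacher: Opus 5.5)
Your proposal is correct and follows essentially the same route as the paper. You get equality with $\widetilde f_{0,\sigma}$ on $D_\sigma$ from $\chi_\sigma=1$, and on $K_\sigma=\overline{D_{1,\sigma}}\setminus D_\sigma$ you combine the identity $\widehat f_{\rho,\sigma}-f_{0,\sigma}=(1-\chi_\sigma)(f_{\rho,\sigma}-f_{0,\sigma})$ with \eqref{eq:f-rate} and the positive lower bound on $|\nabla f_{0,\sigma}|$ there; the support and uniform bounds are read off from \eqref{eq:disks} and the smooth convergence, exactly as the paper does, with your splitting of $d_{\rho,\sigma}$ into an $O(\rho^2)$ term plus a fixed $\rho$-independent term simply making the paper's one-line bound explicit.
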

\begin{proof}
On \(D_\sigma\), equality follows directly from \eqref{eq:hat}.
On the compact set
\(K_\sigma=\overline{D_{1,\sigma}}\setminus D_\sigma\),
the gradient of \(f_{0,\sigma}\) is bounded away from zero,
since its only two critical points are inside \(D_\sigma\). Set
\[
 m_\sigma=\min_{K_\sigma}|\nabla f_{0,\sigma}|>0.
\]
Here the final parenthesis in \eqref{eq:hat} vanishes, and
\[
 \widehat f_{\rho,\sigma}-f_{0,\sigma}
 =(1-\chi_\sigma)(f_{\rho,\sigma}-f_{0,\sigma}).
\]
By the product rule and \eqref{eq:f-rate}, for a fixed
\(C_{\chi,\sigma}>0\) independent of small \(\rho\),
\[
 \bigl\|\nabla(\widehat f_{\rho,\sigma}-f_{0,\sigma})
     \bigr\|_{C^0(K_\sigma)}
 \le C_{\chi,\sigma}\rho^2<m_\sigma/2.
\]
Consequently, \( |\nabla\widehat f_{\rho,\sigma}|\ge
m_\sigma/2\) on \(K_\sigma\), and no additional critical
points occur there. The support statement follows from
\eqref{eq:disks} and the support of \(\chi_\sigma\).
Uniform bounds follow from the smooth convergence in
Proposition~\ref{prop:limit} and Lemma~\ref{lem:source}.
\end{proof}

We now choose the remaining constants in a fixed order.
The planar disks, coordinate changes and scalar modifications in
Section~5 are all fixed first. The functions \(\chi_\sigma\) are
then fixed. The bounds in Lemma~\ref{lem:exact-correction} allow
a constant \(C>0\), independent of all sufficiently small \(\rho\),
such that
\begin{equation}\label{eq:C}
 \bigl|\grad a_\rho(v)+\tau\grad d_{\rho,\sigma}(v)\bigr|
 <C,\qquad
 v\in\overline{D_{1,\sigma}},\quad0\leq\tau\leq1.
\end{equation}
Choose \(\zeta\in C_c^\infty(\R^2,[0,1])\) equal to one on
\(\{|u|\leq C+1\}\) and supported in \(\{|u|<C+2\}\).
Only now reduce \(\rho\) further, so that all conclusions above
hold, the two scaled coordinate supports below are disjoint
and contained in their hemisphere charts, and
Lemma~\ref{lem:source} applies to them with the bound \(C+2\).

We now transfer the scalar correction back to the original momentum scale and realize it by a compactly supported Hamiltonian shear, meaning that a symplectic transformation of the form \((q,p)\mapsto(q+\nabla H(p),p)\), which fixes the momentum coordinate and translates the position coordinate by the gradient of a function of \(p\). 
In the chart near \(P_\sigma\), define
\begin{equation}\label{eq:G}
 G_{\rho,\sigma}(q,p)=
 \rho^2\zeta(q/\rho)d_{\rho,\sigma}(p/\rho).
\end{equation}
It is extended by zero to \(M\).
This is a smooth global function: its momentum support is
compactly contained in \(\rho D_{1,\sigma}\), its position
support is contained in \(|q|<(C+2)\rho\), and both supports
lie in the interior of the coordinate chart.
The factor \(\rho^2\) reverses the normalization of the graph potentials, while the cutoff in \(q/\rho\) localizes the Hamiltonian without affecting the trajectories of interest.
Put
\[
 G_\rho=G_{\rho,+}+G_{\rho,-},\qquad
 \Psi_\rho=\varphi_{G_\rho}^1.
\]
Unlike the mixed-generating-function deformation used in
\cite[\S5]{Jiao}, we realize the exact correction here directly by a
compactly supported Hamiltonian shear in the canonical Lagrangian-graph
coordinates.

\begin{proposition}\label{prop:realization}
The Lagrangians \(\Psi_\rho(L_\rho)\) and \(L_*\) intersect
at exactly two points, one in each chart.
Near the point in the \(\sigma\)-chart, their potential difference
is \(\rho^2\widehat f_{\rho,\sigma}(p/\rho)\).
Both tangent planes coincide at these points.
\end{proposition}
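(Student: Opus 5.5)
The plan is to compute the time-one map $\Psi_\rho$ explicitly in each chart, identify $\Psi_\rho(L_\rho)$ near $P_\sigma$ as a momentum graph, and then count critical points of the new potential difference. First locate everything. Outside the two supports $\{|q|<(C+2)\rho,\ p\in\rho D_{1,\sigma}\}$ the map $\Psi_\rho$ is the identity, since $G_\rho$ vanishes there. The two supports are disjoint and lie in different charts, so $\Psi_\rho$ is the product of the two local flows.

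Inside the $\sigma$-chart, $G_{\rho,\sigma}$ is a function of $(q,p)$. With $\omega=\sum\mathrm dq_i\wedge\mathrm dp_i$ and $\iota_X\omega=\mathrm dG$, we get $\dot q=\partial_pG$ and $\dot p=-\partial_qG$. Where $\zeta(q/\rho)\equiv1$, i.e.\ for $|q|\le(C+1)\rho$, $G$ depends only on $p$. There the flow is the shear $(q,p)\mapsto(q+\tau\nabla_p[\rho^2 d_{\rho,\sigma}(p/\rho)],p)=(q+\tau\rho\nabla d_{\rho,\sigma}(v),p)$.

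Start from a point of $L_\rho$ on the graph $q=\nabla A_\rho(p)=\rho\nabla a_\rho(v)$ with $v\in\overline{D_{1,\sigma}}$. By \eqref{eq:C} the trajectory satisfies $|q(\tau)|<C\rho$ for all $\tau\in[0,1]$. So it never leaves the region $\zeta=1$, and the explicit shear formula is exact along it. The image is therefore the graph of $A_\rho+\rho^2d_{\rho,\sigma}(p/\rho)$ over $\rho D_{1,\sigma}$. The potential difference with $L_*$ is then
\[
F_\sigma-A_\rho-\rho^2d_{\rho,\sigma}(p/\rho)=\rho^2\bigl(f_{\rho,\sigma}-d_{\rho,\sigma}\bigr)(p/\rho)=\rho^2\widehat f_{\rho,\sigma}(p/\rho).
\]
This proves the second claim.

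The main obstacle is excluding stray intersections. Points of $L_\rho$ with $|q|\ge(C+1)\rho$, or with momentum outside $\rho D_{1,\sigma}$ but still in the support, could be moved by $\Psi_\rho$ in uncontrolled ways. I would handle this in two steps.

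\begin{enumerate}
\item Show that no point of $L_\rho$ in the support region has $|q|\ge C\rho$. By Lemma~\ref{lem:source}, applied with bound $C+2$, every point of $L_\rho$ with $v\in\overline{D_{1,\sigma}}$ and $|q|\le(C+2)\rho$ lies on the graph, where $|q|<C\rho$. So the annular shell $C\rho\le|q|<(C+2)\rho$ contains no point of $L_\rho$, and $L_\rho\cap\supp G_{\rho,\sigma}$ is exactly the graph piece. Since $\Psi_\rho$ is a diffeomorphism fixing everything outside the supports, and maps this graph piece into the region $\{|q|<C\rho\}$, the set $\Psi_\rho(L_\rho)$ equals $L_\rho$ outside the supports and equals the new graph inside them.
\item Intersections outside the supports are intersections of $L_\rho$ with $L_*$. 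By Proposition~\ref{prop:four} and Proposition~\ref{prop:limit}, all of these sit at scaled momenta near the saddles inside $D_\sigma\subset D_{1,\sigma}$, with $|q|=O(\rho)$ and in fact $|q|<C\rho$ because they lie on the graph. So they are all inside the supports, and none survive outside.
\end{enumerate}

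Inside the supports, $L_*$ is the graph of $F_\sigma$ by Lemma~\ref{lem:target}, since the momenta are small. By Lemma~\ref{lem:graph}, intersections correspond to critical points of $\widehat f_{\rho,\sigma}$ in $D_{1,\sigma}$. Lemma~\ref{lem:exact-correction} gives exactly one such critical point.

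Finally, that critical point lies in $D_\sigma$, where $\widehat f_{\rho,\sigma}=\widetilde f_{0,\sigma}$, which has the local form $c+t(t^2-s^2)$ by Lemma~\ref{lem:merge}. Its Hessian vanishes there: this vanishing is invariant under the coordinate change because the point is critical. The $p$-Hessian of the potential difference equals the $v$-Hessian of $\widehat f_{\rho,\sigma}$, so it is zero as well. Lemma~\ref{lem:graph} then shows the two tangent planes coincide, giving exactly two intersections, one per chart.
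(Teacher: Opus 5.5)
Your proposal is correct and follows essentially the same route as the paper: the explicit shear on the region $\zeta=1$, kept valid along trajectories by \eqref{eq:C}; exclusion of other branches of $L_\rho$ via Lemma~\ref{lem:source} with bound $C+2$; the critical-point count from Lemma~\ref{lem:exact-correction}; and the vanishing Hessian, which gives coincident tangent planes. One phrase is loose: you say the four original intersections lie ``inside the supports'', but you only show that they lie over $\rho D_\sigma$ with $|q|<C\rho$, and $\supp d_{\rho,\sigma}$ need not contain $D_\sigma$. That weaker fact is all you need, because your new-graph description over all of $\rho D_{1,\sigma}$ already counts every intersection there, moved or not.
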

\begin{proof}
For \(p=\rho v\), the original source graph is
\[
 q=\grad_p A_\rho(p)=\rho\grad a_\rho(v).
\]
As long as \(\zeta=1\), Hamilton's equations for
\eqref{eq:G} are
\[
 \dot p=0,\qquad
 \dot q=\rho\grad d_{\rho,\sigma}(v).
\]
Their proposed solution is
\begin{equation}\label{eq:solu}
 p(\tau)=\rho v,\qquad
 q(\tau)=\rho\bigl(\grad a_\rho(v)+
                    \tau\grad d_{\rho,\sigma}(v)\bigr).
\end{equation}
By \eqref{eq:C}, it stays in the region where \(\zeta=1\),
so it is the actual solution for \(0\leq\tau\leq1\).
Thus the new potential for $L_\rho$ is exactly
\[
 A_\rho^{\mathrm{new}}(p)
 =A_\rho(p)+\rho^2d_{\rho,\sigma}(p/\rho),
\]
and its difference from the potential for $L_*$ satisfies
\begin{equation}\label{eq:difference}
 F_\sigma(p)-A_\rho^{\mathrm{new}}(p)
 =\rho^2\widehat f_{\rho,\sigma}(p/\rho).
\end{equation}

We check that this graph calculation includes every moved point
of \(L_\rho\). Any initial point in the support of
\(G_{\rho,\sigma}\) has
\[
 p/\rho\in D_{1,\sigma},\qquad |q|<(C+2)\rho.
\]
Lemma~\ref{lem:source} says that every such point of \(L_\rho\)
is on the single graph just used. Conversely, that
entire graph over the support of \(d_{\rho,\sigma}\) lies
in \(\zeta=1\), by \eqref{eq:C}.
All these points remain in the same chart with fixed momentum.
If a point lies outside the support of \(G_{\rho,\sigma}\), then a
neighborhood of that point has \(G_{\rho,\sigma}=0\), hence the
Hamiltonian vector field vanishes there; the constant trajectory is therefore
the unique Hamiltonian trajectory through that point.  Such a point cannot
enter the support later.  The two chart supports are disjoint.  Consequently
the graph calculation above accounts for every point of the deformed torus
that can move.

For every moved point, the preceding graph calculation gives \eqref{eq:solu}
with \(v\in D_{1,\sigma}\subset\mathcal A\). By \eqref{eq:C},
\[
|p(\tau)|<2\rho,\qquad |q(\tau)|<C\rho
\]
uniformly for \(0\le\tau\le1\). Thus every moved point satisfies
\[
|p(\tau)|+|q(\tau)|<(C+2)\rho=O(\rho).
\]
Consequently, for sufficiently small \(\rho\), any new intersection with \(L_*\) lies in the local target branch described by Lemma~\ref{lem:target}. 
By
Lemma~\ref{lem:exact-correction} and Lemma~\ref{lem:graph},
\eqref{eq:difference} therefore yields precisely one intersection in each
\(D_{1,\sigma}\).
The original four intersections lie over \(D_\sigma\) for
small \(\rho\), by Proposition~\ref{prop:limit}; hence all
are included in this replacement.
Outside these regions the torus is unchanged, and
Proposition~\ref{prop:four} excludes any additional original
intersection. Every moved point stays within the charts where
\eqref{eq:difference} was counted, so no new intersection
elsewhere is possible.

To summarize the global exclusion explicitly: the complete
intersection set of \(L_\rho\) with \(L_*\) is the four-point
set of Proposition~\ref{prop:four}, and all four points lie in
the two correction disks for sufficiently small \(\rho\).
Every point of \(L_\rho\) that moves under \(\Psi_\rho\) lies in one of the local Darboux charts centered at \(P_\sigma\), where \(L_\rho\) is represented by the graph \(q=\nabla_p A_\rho(p)\), and its trajectory remains within that chart, whereas every
point outside the Hamiltonian support is fixed. Hence the two
local critical-point counts exhaust all global intersections of
\(\Psi_\rho(L_\rho)\) with \(L_*\); no uncounted intersection
can occur in the complement.

At either remaining critical point,
\(\widehat f_{\rho,\sigma}=\widetilde f_{0,\sigma}\).
In the coordinates of Lemma~\ref{lem:merge}, its Hessian
vanishes because its nonconstant part is \(t(t^2-s^2)\).
At a critical point, a change of variables transforms a
Hessian by congruence; thus it is zero in the original
momentum variables as well.
Equation \eqref{eq:difference} and Lemma~\ref{lem:graph}
prove equality of the tangent planes.
\end{proof}

\subsection*{A smooth periodic Hamiltonian and proof of the main theorem}

The inverse of the rotation \(\mathcal R\) in \eqref{eq:Rot}
is the time-one map of
\begin{equation}\label{eq:J}
 J(j,k)=-\frac{\pi}{2}(j_x+k_y).
\end{equation}
Choose nonnegative functions \(b_0,b_1,b_2\in C^\infty(\R/\mathbb Z)\)
whose supports are compactly contained, respectively, in
\((0,1/3)\), \((1/3,2/3)\) and \((2/3,1)\), and whose
integrals are all one. Such functions are obtained by translating
and rescaling a positive smooth bump and dividing by its integral.
Define the explicit Hamiltonian, with the scalar functions specified
in the preceding sections, by
\begin{equation}\label{eq:H}
 H(t,j,k)=b_0(t)K_\rho(j,k)+b_1(t)G_\rho(j,k)+b_2(t)J(j,k).
\end{equation}
It vanishes near the endpoints of the unit time interval and
therefore defines a smooth periodic function.

\begin{proof}[Proof of Theorem~\ref{thm:main}]
On each of the three time intervals, the flow in
\eqref{eq:H} is a time reparameterization of the respective
autonomous flow. The integral-one normalization of the
coefficients gives
\[
 \varphi_H^1=
 \mathcal R^{-1}\circ\Psi_\rho\circ \Phi_\rho.
\]
Since \(\mathcal R(L)=L_*\),
\[
 L\cap\varphi_H^1(L)
 =\mathcal R^{-1}\bigl(L_*\cap\Psi_\rho(L_\rho)\bigr).
\]
Proposition~\ref{prop:realization} proves that this set has
exactly two points and that their tangent planes coincide.
Rotating the canonical charts by the symplectic map
\(\mathcal R^{-1}\) preserves the potential description.
In the \((s,t)\) coordinates used in Lemma~\ref{lem:merge},
\eqref{eq:difference} is
\(\rho^2c+\rho^2t(t^2-s^2)\).
The further nonsingular change
\(s'=\rho^{2/3}s,\ t'=\rho^{2/3}t\)
makes its nonconstant part exactly \(t'(t'^2-s'^2)\).
This also proves that each intersection is isolated and gives
the local expression stated in the preceding remark; the vanishing Hessian
also recovers the nontransversality asserted in Theorem~\ref{thm:main}.
\end{proof}

\section{The Clifford torus in \texorpdfstring{\(\CP^2\)}{CP2}}\label{sec:cp2}

We now prove Theorem~\ref{thm:cp2}.  The argument reuses only the relative
saddle-combination lemma (cf.~Lemma~\ref{lem:merge}); the initial four-intersection configuration is
specific to \(\CP^2\).

Let
\[
 X^\circ=\{[z_0:z_1:z_2]\in\CP^2:z_0z_1z_2\neq0\}
\]
be the open toric orbit.  On \(X^\circ\), put
\begin{equation}\label{eq:cp2-action-angle}
 q_1=\arg(z_1/z_0),\qquad q_2=\arg(z_2/z_0),
\end{equation}
and
\[
 \mu_j([z])=\frac{|z_j|^2}{|z_0|^2+|z_1|^2+|z_2|^2},\qquad
 p_j=\frac13-\mu_j,\qquad j=1,2.
\]
Here the arguments are understood modulo \(2\pi\), so that \(q_1,q_2\in\mathbb R/2\pi\mathbb Z\). 
Thus \(q=(q_1,q_2)\in(\R/2\pi\mathbb Z)^2\) are angle variables and
\(p=(p_1,p_2)\) are the centered moment variables.  With our normalization
of \(\omega_{\mathrm{FS}}\), the standard toric action--angle formula is
\begin{equation}\label{eq:cp2-canonical}
 \omega_{\mathrm{FS}}
 =\mathrm dq_1\wedge\mathrm dp_1+
  \mathrm dq_2\wedge\mathrm dp_2.
\end{equation}
Indeed, on the affine chart $z_0\neq 0$, set
\[
w_j=\frac{z_j}{z_0}=r_je^{iq_j},
\qquad
D=1+|w_1|^2+|w_2|^2.
\]
With the normalization
$\int_{\CP^1}\omega_{\mathrm{FS}}=2\pi$,
we have
\[
\omega_{\mathrm{FS}}
=i\partial\bar\partial\log D
=dd^c\log D,
\qquad
d^c=\frac{i}{2}(\bar\partial-\partial).
\]
A direct computation gives
\[
\begin{aligned}
d^c\log D
&=\frac{i}{2D}
  \sum_{j=1}^2
  \bigl(w_j\,d\bar w_j-\bar w_j\,dw_j\bigr)\\
&=\sum_{j=1}^2\frac{r_j^2}{D}\,dq_j=\sum_{j=1}^2\mu_j\,dq_j.
\end{aligned}
\]
Consequently,
\[
\begin{aligned}
\omega_{\mathrm{FS}}
&=\sum_{j=1}^2 d\mu_j\wedge dq_j=\sum_{j=1}^2 dq_j\wedge dp_j,
\end{aligned}
\]
where the last equality follows from
$p_j=1/3-\mu_j$.

The Clifford torus is the zero section \(T_{\mathrm{Cl}}=\{p=0\}\) in these
coordinates.  The triangle
\begin{equation}\label{eq:cp2-moment-domain}
 \Delta_p^\circ=
 \{p_1<1/3,\ p_2<1/3,\ p_1+p_2>-1/3\}.
\end{equation} is the image of the open dense torus orbit under the moment map $p=(1/3-\mu_1,1/3-\mu_2)$. 

We shall use the position-graph version of Lemma~\ref{lem:graph}.

\begin{lemma}\label{lem:position-graph}
In canonical coordinates with \(\omega=\sum_i\mathrm dq_i\wedge\mathrm dp_i\),
a position graph
\[
 \Lambda_f=\{(q,p):p=\grad f(q)\}
\]
is Lagrangian.  Its intersections with the zero section are precisely the
critical points of \(f\).  Such an intersection is transverse if and only if
\(D^2f\) is invertible there; if \(D^2f=0\), the two tangent planes coincide.
\end{lemma}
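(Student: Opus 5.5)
The plan is to mirror the proof of Lemma~\ref{lem:graph}, swapping the roles of \(q\) and \(p\). The only extra care needed is with the sign of the pullback of \(\omega\).

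First, parametrize \(\Lambda_f\) by \(q\mapsto(q,\nabla f(q))\). Then \(\mathrm dp_i=\sum_\ell \partial_{q_\ell}\partial_{q_i}f\,\mathrm dq_\ell\), so
\[
\sum_i \mathrm dq_i\wedge\mathrm dp_i=\sum_{i,\ell}\partial_{q_i}\partial_{q_\ell}f\,\mathrm dq_i\wedge\mathrm dq_\ell ,
\]
which vanishes because the Hessian is symmetric and the wedge product is antisymmetric. The graph is an embedded submanifold of half the ambient dimension, so it is Lagrangian. Equivalently, \(\Lambda_f\) is the image of the zero section under the graph shear \((q,p)\mapsto(q,p+\nabla f(q))\), which one can check is symplectic by the same computation.

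Next, the zero section is \(\{p=0\}\). A point \((q,\nabla f(q))\) lies on it exactly when \(\nabla f(q)=0\). This gives a bijection between intersection points and critical points of \(f\), via \(q\mapsto(q,0)\).

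For the tangent spaces at such a point, we have
\[
T\Lambda_f=\{(\xi,D^2f(q)\xi):\xi\in\R^2\},\qquad T\{p=0\}=\{(\xi,0):\xi\in\R^2\}.
\]
Their intersection is \(\{(\xi,0):D^2f(q)\xi=0\}\cong\ker D^2f(q)\). Both spaces are two-dimensional inside a four-dimensional space. By the dimension formula, they span \(T(q,0)\) if and only if their intersection is zero, that is, if and only if \(D^2f(q)\) is invertible. If \(D^2f(q)=0\), then \(T\Lambda_f=\{(\xi,0)\}\), which is exactly the tangent plane of the zero section, so the two planes coincide.

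There is no real obstacle here. The only point needing attention is that the coordinates in \eqref{eq:cp2-canonical} have \(q\) periodic. The statement is local in \(q\), so this is harmless: \(f\) may be defined on an open subset of the torus of angles, or lifted to \(\R^2\), and all the identities above are pointwise.
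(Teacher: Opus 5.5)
Your proof is correct and follows the same route as the paper. You pull back $\omega$ along $q\mapsto(q,\nabla f(q))$ to get a sum against the symmetric Hessian that vanishes. You then compare $T\Lambda_f=\{(\xi,D^2f\,\xi)\}$ with the horizontal tangent space of the zero section, whose intersection is $\ker D^2f$, making explicit the dimension count the paper leaves implicit.
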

\begin{proof}
The pullback of \(\omega\) to the graph is
\(\sum_{i,j}f_{ij}\,\mathrm dq_i\wedge\mathrm dq_j=0\).
The remaining assertions follow by comparing
\(T\Lambda_f=\{(\xi,D^2f\,\xi)\}\) with the horizontal tangent space of the
zero section.
\end{proof}

To obtain an explicit configuration suitable
for the saddle-merging construction of
Lemma~\ref{lem:merge}, we choose an
orthogonal transformation whose image of
the Clifford torus has four transverse
intersection points with the original torus,
organized into two pairs joined by
equal-level arcs. The following matrix,
with its special sum-and-difference
structure, allows these properties to be
verified explicitly. 
Consider the real orthogonal matrix
\begin{equation}\label{eq:U}
 U=
 \begin{pmatrix}
  1/\sqrt2&1/2&-1/2\\
  1/\sqrt2&-1/2&1/2\\
  0&1/\sqrt2&1/\sqrt2
 \end{pmatrix}.
\end{equation}
Although \(\det U=-1\), the matrix \(V=-U\) lies in
\(SO(3)\subset SU(3)\), and \(U\) and \(V\) induce the same projective
transformation.  We denote it by
\[
 \Phi_U([z])=[Uz].
\]
Then we have \[
\Phi_U\in\operatorname{Ham}
(\CP^2,\omega_{\mathrm{FS}})
.\]
Indeed, since $V\in SO(3)$, its rotation angle
$\theta\in(0,\pi)$ satisfies
\[
\cos\theta
=\frac{\operatorname{tr}(V)-1}{2}
=-\frac{1+2\sqrt2}{4}.
\]
By Rodrigues' rotation formula
\cite{Rodrigues1840,LynchPark2017}, there is
a skew-symmetric matrix $K$ such that
\[
V=I+\sin\theta\,K
  +(1-\cos\theta)K^2,
\qquad
K^3=-K.
\]
Taking the transpose $V^T$ of $V$ gives
\[
V-V^T=2\sin\theta\,K.
\]
Hence
\[
B:=\theta K
=\frac{\theta}{2\sin\theta}(V-V^T)
\in\mathfrak{so}(3).
\]
Using $K^3=-K$, we obtain
\[
e^B=e^{\theta K}
=I+\sin\theta\,K
 +(1-\cos\theta)K^2
=V.
\]
Set \(A=iB\), a Hermitian trace-free matrix, and define
\begin{equation}\label{eq:cp2-HU}
 H_U([z])=\frac{z^*Az}{z^*z},
\end{equation}
where \(z^*\) denotes conjugate transpose.

The angle $\theta$ and the matrix $B$
are determined entirely by $V$ and are
independent of the normalization of the
Fubini--Study symplectic form. With the normalization
$\int_{\CP^1}\omega_{\mathrm{FS}}=2\pi$
and the Hamiltonian sign convention
\eqref{eq:ham-convention}, the standard
moment-map formula for the projective
unitary action gives
\begin{equation}\label{eq:cp2-HU-flow}
 \varphi_{H_U}^t([z])
 =[e^{-itA}z]=[e^{tB}z].
\end{equation}

To verify the flow formula, first suppose
that $A=\operatorname{diag}(\lambda_0,
\lambda_1,\lambda_2)$ is diagonal.
Using $\mu_0+\mu_1+\mu_2=1$ and
$p_j=1/3-\mu_j$, we obtain
\[
H_U
=\frac{\lambda_0+\lambda_1+\lambda_2}{3}
-(\lambda_1-\lambda_0)p_1
-(\lambda_2-\lambda_0)p_2.
\]
By \eqref{eq:cp2-canonical} and the
Hamiltonian convention
\eqref{eq:ham-convention}, Hamilton's
equations are
\[
\dot q_j=-(\lambda_j-\lambda_0),
\qquad
\dot p_j=0,\qquad j=1,2.
\]
Since $q_j=\arg(z_j/z_0)$, this gives
\[
\varphi_{H_U}^t([z])
=[e^{-it\lambda_0}z_0:
  e^{-it\lambda_1}z_1:
  e^{-it\lambda_2}z_2]
=[e^{-itA}z].
\]
For a general Hermitian matrix, write
$A=WDW^*$, where $W$ is unitary and
$D$ is real diagonal. Since $\Phi_W$
preserves $\omega_{\mathrm{FS}}$ and
$H_U=H_D\circ\Phi_{W^*}$, the corresponding
flows are conjugate:
\[
\varphi_{H_U}^t
=\Phi_W\circ\varphi_{H_D}^t
 \circ\Phi_{W^*}.
\]
Consequently,
\[
\varphi_{H_U}^t([z])
=[We^{-itD}W^*z]
=[e^{-itA}z]
=[e^{tB}z].
\]
In particular, since $e^B=V=-U$,
\[
\varphi_{H_U}^1=\Phi_V=\Phi_U.
\]

\begin{proposition}\label{prop:cp2-four}
The Lagrangians \(T_{\mathrm{Cl}}\) and \(\Phi_U(T_{\mathrm{Cl}})\) intersect
transversely in exactly four points, namely
\begin{equation}\label{eq:cp2-four-points}
 P_{\varepsilon,\delta}
 =\bigl[e^{i\varepsilon\pi/4}:e^{-i\varepsilon\pi/4}:\delta\bigr],
 \qquad \varepsilon,\delta\in\{+1,-1\}.
\end{equation}
\end{proposition}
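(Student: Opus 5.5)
The plan is to solve the intersection equations directly in homogeneous coordinates, and then to check transversality with a $2\times2$ Jacobian in the action--angle coordinates \eqref{eq:cp2-action-angle}.

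\textbf{Locating the intersection points.} A point $[w]\in\CP^2$ lies in $T_{\mathrm{Cl}}\cap\Phi_U(T_{\mathrm{Cl}})$ exactly when $w=Uy$ for some $y$ with $|y_0|=|y_1|=|y_2|$ and $|w_0|=|w_1|=|w_2|$. Using the torus action I normalize $y=(1,e^{ia},e^{ib})$. Since $U$ is real orthogonal, $|Uy|^2=3$, so each $|(Uy)_j|$ must equal $1$. Put $\omega=(e^{ia}-e^{ib})/2$. Then
\[
 (Uy)_0=\tfrac1{\sqrt2}+\omega,\qquad (Uy)_1=\tfrac1{\sqrt2}-\omega,\qquad (Uy)_2=\tfrac{e^{ia}+e^{ib}}{\sqrt2}.
\]
The condition $|(Uy)_0|=|(Uy)_1|$ forces $\operatorname{Re}\omega=0$, that is, $\cos a=\cos b$. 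The condition $|(Uy)_2|=1$ forces $\cos(a-b)=0$. After these two, $|(Uy)_0|=1$ follows from $|\omega|^2=1/2$.

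Since $b\equiv a$ contradicts $\cos(a-b)=0$, we must have $b\equiv-a$ with $2a\equiv\pm\pi/2$. This leaves exactly $a\in\{\pm\pi/4,\pm3\pi/4\}$, giving four points $y$. Computing $Uy$ and rescaling by a unit scalar should reproduce the four points $P_{\varepsilon,\delta}$ in \eqref{eq:cp2-four-points}. For example, $a=\pi/4$ gives $\omega=i\sin(\pi/4)=i/\sqrt2$, so $Uy=(e^{i\pi/4},e^{-i\pi/4},1)$. The sign $\delta$ comes from $a\mapsto a+\pi$, and $\varepsilon$ from $a\mapsto-a$. This part is routine bookkeeping.

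\textbf{Transversality.} In the chart \eqref{eq:cp2-canonical}, $T_{\mathrm{Cl}}=\{p=0\}$, so its tangent plane at each point is the $q$-plane. The image torus is parametrized near $y$ by $(\alpha,\beta)\mapsto[U(1,e^{i(a+\alpha)},e^{i(-a+\beta)})]$. The intersection is transverse if and only if the differential of $p\circ\Phi_U$ along $T_yT_{\mathrm{Cl}}$ has no kernel. Equivalently, the $2\times2$ Jacobian
\[
 \partial_{(\alpha,\beta)}\bigl(|(Uy)_1|^2,|(Uy)_2|^2\bigr)\big/3
\]
must be invertible at $\alpha=\beta=0$. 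The denominator is constant, which simplifies the computation. Each entry is of the form $2\operatorname{Re}\bigl(\overline{(Uy)_j}\,U_{jk}\,iy_k\bigr)$, and I will evaluate these explicitly at the four points.

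I expect that the finite symmetries $a\mapsto-a$ and $a\mapsto a+\pi$ (complex conjugation and the sign on $y_1,y_2$, both compatible with $U$ and the torus) reduce the work to a single point. I anticipate a determinant such as $\pm1$ or $\pm1/2$ there.

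\textbf{Main obstacle.} The only real risk is the Jacobian computation: signs, and making sure the entries are evaluated at the correct base point $y$ rather than at $P$. If the direct computation is messy, I would instead compute the Hessian of the generating function of $\Phi_U(T_{\mathrm{Cl}})$ in Lemma~\ref{lem:position-graph} form. The intersection count itself is already settled by the first step.
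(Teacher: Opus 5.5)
Your proposal is correct and is essentially the paper's proof. It uses the same normalization $[1:e^{i\alpha}:e^{i\beta}]$, the same reduction to $\cos a=\cos b$ and $\cos(a-b)=0$, and transversality via the Jacobian of $p$ along the parametrized image torus. The paper uses the pair $|(Uy)_0|^2-|(Uy)_1|^2$ and $|(Uy)_2|^2-1$, which is an invertible linear change of your pair, and gets $-2\sqrt2\sin a\sin 2a\neq0$ at all four points at once, so it needs no symmetry reduction; your undivided determinant is $\sqrt2\sin a\sin 2a=\pm1$, as you guessed.

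One bookkeeping slip: $a\mapsto a+\pi$ flips both signs (it sends $P_{+,+}$ to $P_{-,-}$), so $\delta$ alone is flipped by $a\mapsto\pi-a$. Your four-point list is unaffected.
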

\begin{proof}
Every point of \(T_{\mathrm{Cl}}\) has a unique representative of the form
\([
 1:e^{i\alpha}:e^{i\beta}
])\).  Set
\(y=U(1,e^{i\alpha},e^{i\beta})^T\).  Its components are
\begin{equation}\label{eq:cp2-y}
  y_0=\frac1{\sqrt2}+\frac{e^{i\alpha}-e^{i\beta}}2,\qquad
  y_1=\frac1{\sqrt2}-\frac{e^{i\alpha}-e^{i\beta}}2,\qquad
  y_2=\frac{e^{i\alpha}+e^{i\beta}}{\sqrt2}.
\end{equation}
Because \(U\) is unitary, \(|y_0|^2+|y_1|^2+|y_2|^2=3\).  Hence
\([y]\in T_{\mathrm{Cl}}\) if and only if
\(|y_0|=|y_1|\) and \(|y_2|=1\).  Directly from
\eqref{eq:cp2-y}, these conditions are
\begin{equation}\label{eq:cp2-intersection-equations}
 \cos\alpha=\cos\beta,\qquad
 \cos(\alpha-\beta)=0.
\end{equation}
The first equation gives \(\beta\equiv\pm\alpha\pmod{2\pi}\); the plus sign
is incompatible with the second equation.  Thus $\beta\equiv-\alpha\pmod{2\pi}$ and
$\cos 2\alpha=0$, yielding exactly four
solutions modulo $2\pi$:
\[
(\alpha,\beta)\in
\left\{
\left(\frac{\pi}{4},\frac{7\pi}{4}\right),
\left(\frac{3\pi}{4},\frac{5\pi}{4}\right),
\left(\frac{5\pi}{4},\frac{3\pi}{4}\right),
\left(\frac{7\pi}{4},\frac{\pi}{4}\right)
\right\}.
\]
Substituting these four solutions into \eqref{eq:cp2-y} gives
\eqref{eq:cp2-four-points}.

Consider the parametrization
\[
\Psi:\mathbb T^2\longrightarrow \Phi_U(T_{\mathrm{Cl}}),
\qquad
(\alpha,\beta)\longmapsto [y],
\]
where $y$ is given in \eqref{eq:cp2-y}. 
 We now define
\[
\begin{aligned}
F_1(\alpha,\beta)
&=|y_0|^2-|y_1|^2=\sqrt{2}(\cos\alpha-\cos\beta),\\[4pt]
F_2(\alpha,\beta)
&=|y_2|^2-1=\cos(\alpha-\beta).
\end{aligned}
\]
Then
\(
\Psi(\alpha,\beta)\in T_{\mathrm{Cl}}\)
if and only if 
\(F_1(\alpha,\beta)=F_2(\alpha,\beta)=0.
\)
At a solution with \(\beta=-\alpha\),
\[
 \det\frac{\partial(F_1,F_2)}{\partial(\alpha,\beta)}
 =-2\sqrt2\,\sin\alpha\,\sin2\alpha\neq0.
\]

Since $\|y\|^2=3$ and $p_j=1/3-\mu_j$, we have
\[
\frac{F_1}{3}=2p_1+p_2,
\qquad
\frac{F_2}{3}=-p_2.
\]
The corresponding linear transformation is invertible.
Thus, the nonsingularity of the Jacobian of
$(F_1,F_2)$ implies that the restriction of
$p=(p_1,p_2)$ to $\Phi_U(T_{\mathrm{Cl}})$
has invertible differential at each intersection point.
Since $T_{\mathrm{Cl}}=\{p=0\}$, this proves that
$T_{\mathrm{Cl}}$ and $\Phi_U(T_{\mathrm{Cl}})$
intersect transversely.
\end{proof}

The pairing of the four points has a simple symmetry.  Let
\[
 \mathcal S[z_0:z_1:z_2]=[z_1:z_0:z_2],\qquad
 \mathcal T[z_0:z_1:z_2]=[z_0:z_2:z_1].
\]
Both projective coordinate permutations preserve \(T_{\mathrm{Cl}}\), and
the permutation matrices \(P_{\mathcal S}\) and \(P_{\mathcal T}\), representing these two coordinate permutations on \(\mathbb C^3\), satisfy \(P_{\mathcal S}U=UP_{\mathcal T}\).  Consequently
\(\mathcal S\circ\Phi_U=\Phi_U\circ \mathcal T\), while
\(\mathcal S(P_{\varepsilon,\delta})=P_{-\varepsilon,\delta}\).  Thus the four
intersections are naturally organized into the two pairs with fixed
\(\delta\).  This finite symmetry plays the organizational role suggested by
the symmetric-orbit viewpoint of \cite{LuAG}; it should not be confused with
a real anti-symplectic involution, which does not exist for the Clifford torus
in \(\CP^2\).

\begin{figure}[htbp]
    \centering
    \includegraphics[width=0.58\textwidth]{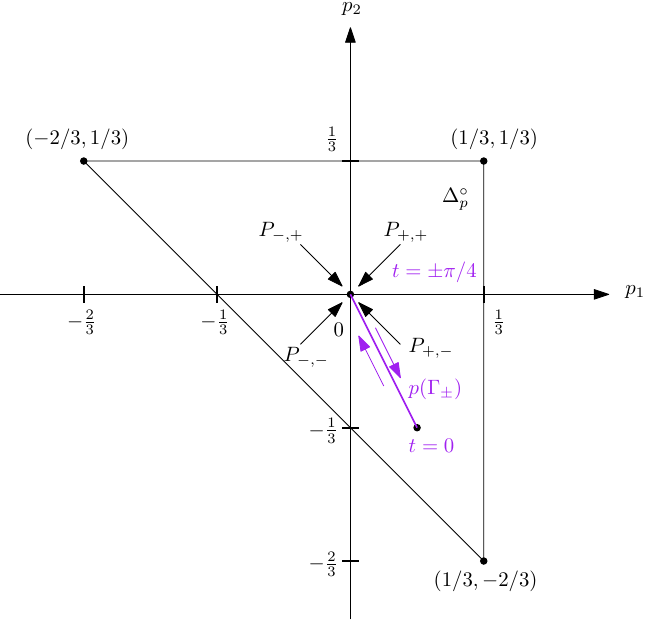}
    \caption{The image of the arcs $\Gamma_\pm$  with four endpoints $P_{\varepsilon,\delta}$ having the same image $0$ under the moment map $p$.}
    \label{fig:triangle}
\end{figure}

The four points split into two pairs joined by exact level arcs.  For
\(|t|<\pi/2\), define
\begin{align}
 \Gamma_+(t)&=\Phi_U[1:e^{it}:e^{-it}],\label{eq:Gamma-plus}\\
 \Gamma_-(t)&=\Phi_U[1:-e^{it}:-e^{-it}].\label{eq:Gamma-minus}
\end{align}
Then
\begin{align}
 \Gamma_+(t)&=
 \left[\frac1{\sqrt2}+i\sin t:
       \frac1{\sqrt2}-i\sin t:
       \sqrt2\cos t\right],\label{eq:Gamma-plus-explicit}\\
 \Gamma_-(t)&=
 \left[\frac1{\sqrt2}-i\sin t:
       \frac1{\sqrt2}+i\sin t:
       -\sqrt2\cos t\right].\label{eq:Gamma-minus-explicit}
\end{align}
The first arc passes through \(P_{-,+}\) and \(P_{+,+}\) at
\(t=-\pi/4\) and \(t=\pi/4\), respectively; the second passes through
\(P_{+,-}\) and \(P_{-,-}\) at those two parameter values; see Figure~\ref{fig:triangle}.

\begin{lemma}\label{lem:cp2-level-arcs}
For each sign \(\sigma\in\{+,-\}\), a neighborhood of the compact subarc of
\(\Gamma_\sigma\) obtained by extending slightly beyond
\([-\pi/4,\pi/4]\) is a single position graph
\[
 p=\grad f_\sigma(q)
\]
over a disk in the angle variables.  The two endpoints corresponding to
\(t=\pm\pi/4\) are nondegenerate saddles of \(f_\sigma\), have the same
critical value, and are joined by an embedded regular arc in that common
level set.  The two graph neighborhoods can be chosen disjoint, and each
contains no other critical point of its generating function.
\end{lemma}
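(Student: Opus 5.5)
The plan is to work entirely in the toric action--angle chart \((q,p)\) of \eqref{eq:cp2-canonical} and to use the explicit parametrizations \eqref{eq:Gamma-plus-explicit}--\eqref{eq:Gamma-minus-explicit}. First I compute the moment and angle coordinates along \(\Gamma_+\). The squared moduli of the three components are \(1/2+\sin^2 t\), \(1/2+\sin^2 t\) and \(2\cos^2 t\), which sum to \(3\). This gives
\[
 p(\Gamma_+(t))=\cos 2t\,\Bigl(\tfrac16,-\tfrac13\Bigr),\qquad
 q(\Gamma_+(t))=-\psi(t)\,(2,1),\qquad \psi(t)=\arctan\bigl(\sqrt2\sin t\bigr).
\]
For \(\Gamma_-\) the computation is the same, except that \(q_2\) is shifted by \(\pi\) and \(\psi\) changes sign. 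The key identity is
\[
 p\cdot\dot q=-\cos 2t\,\dot\psi\,\Bigl(\tfrac26-\tfrac13\Bigr)=0 .
\]
So the Liouville form \(p\,\mathrm dq\) vanishes along each arc. Moreover, the \(q\)-projection of the arc is a straight segment traversed monotonically, because \(\dot\psi>0\) for \(|t|<\pi/2\). In particular this projection is embedded. Finally, \(p=0\) exactly at \(t=\pm\pi/4\), so the interior of the arc avoids \(T_{\mathrm{Cl}}\).

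Next I show that \(\Phi_U(T_{\mathrm{Cl}})\) is a position graph near the slightly extended compact subarc. It suffices to check that the restriction of \(q\) to \(\Phi_U(T_{\mathrm{Cl}})\) has invertible differential at every point of the subarc. Combined with injectivity of \(q\) on the compact arc, the standard argument (a local diffeomorphism that is injective on a compact set is injective on a neighborhood) then yields a neighborhood \(W_\sigma\) of the arc in the Lagrangian that maps diffeomorphically onto a disk \(\mathcal D_\sigma\) in \(q\)-space. Concretely, I will use the parametrization \((\alpha,\beta)\mapsto[y]\) from \eqref{eq:cp2-y}, take \(q_1=\arg(y_1/y_0)\) and \(q_2=\arg(y_2/y_0)\), and compute the Jacobian \(\partial(q_1,q_2)/\partial(\alpha,\beta)\) along the curve \(\beta=-\alpha\), \(\alpha=t\) near \([-\pi/4,\pi/4]\), checking that it does not vanish. \emph{This Jacobian check is the step I expect to be the main obstacle}: it is an explicit but messy trigonometric computation. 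If a closed form is awkward, I would expand in the two directions \(\partial_\alpha\pm\partial_\beta\), since \(\partial_\alpha-\partial_\beta\) is the arc direction, whose image \(-\dot\psi(2,1)\) is already known to be nonzero. It then only remains to show that the image of the transverse direction is not parallel to \((2,1)\).

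Given the graph description \(p=\grad f_\sigma(q)\) over the disk \(\mathcal D_\sigma\), the Lagrangian condition makes \(p\,\mathrm dq\) closed on the simply connected disk, so the generating function \(f_\sigma\) exists. Since \(p\cdot\dot q=0\), the function \(f_\sigma\) is constant along the arc. Hence the two endpoints have the same critical value and lie on an embedded arc in that level set. By Lemma~\ref{lem:position-graph}, critical points of \(f_\sigma\) correspond to intersections with \(T_{\mathrm{Cl}}=\{p=0\}\). By Proposition~\ref{prop:cp2-four}, the only such points in \(W_\sigma\) are the two endpoints, provided the neighborhood is narrow enough to exclude the other pair. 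The interior of the arc is regular because \(p\neq0\) there. Transversality in Proposition~\ref{prop:cp2-four} makes both endpoints nondegenerate, again by Lemma~\ref{lem:position-graph}. Each endpoint is a saddle rather than an extremum, because the level set through a nondegenerate local extremum is locally a single point, whereas here it contains a smooth arc through the critical point.

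Disjointness of the two graph neighborhoods follows because along \(\Gamma_+\) the third component \(\sqrt2\cos t\) is positive, while along \(\Gamma_-\) it is negative. Thus the \(q_2\)-coordinates of the two compact arcs differ by approximately \(\pi\), and sufficiently thin tubular neighborhoods of the two compact arcs are disjoint, both in \(\CP^2\) and in \(q\)-space.
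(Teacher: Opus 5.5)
Your computations along \(\Gamma_\pm\), the vanishing of \(p_1\,\mathrm dq_1+p_2\,\mathrm dq_2\), the monotone straight-segment image, and your saddle, nondegeneracy and disjointness arguments all agree with the paper. The Jacobian step you flagged goes through exactly as you suggest. Write \(m=(\alpha+\beta)/2\) and \(d=(\alpha-\beta)/2\), and put \(D=2-\cos2t\). Then the arc direction \(\partial_d=\partial_\alpha-\partial_\beta\) maps to \(-\sigma\sqrt2\cos t\,D^{-1}(2,1)\), and the transverse direction \(\partial_m=\partial_\alpha+\partial_\beta\) maps to \((0,D^{-1})\). This is the paper's computation.

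Where you differ is the graph step. Your argument ("a local diffeomorphism that is injective on a compact set is injective on a neighborhood") gives an open set \(W_\sigma\subset\Phi_U(T_{\mathrm{Cl}})\) around the extended arc that maps diffeomorphically onto an angle disk. This is more economical than the paper's route. It is a correct proof of the lemma if "neighborhood" is read inside the Lagrangian.

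The paper, however, proves and later uses a stronger statement: over a small angle disk, \emph{all} of \(\Phi_U(T_{\mathrm{Cl}})\cap X^\circ\) is the single sheet \(W_\sigma\). Your injectivity argument only controls points of the Lagrangian near the arc. It says nothing about distant sheets of \(\Phi_U(T_{\mathrm{Cl}})\) with the same angle coordinates.

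This matters in Proposition~\ref{prop:cp2-two}. There the cutoff \(\chi_\sigma\) must equal one near the interpolation set \(\mathcal C_\sigma\), which lies off the Lagrangian. The proof needs \(\supp K_\sigma\) to meet \(\Phi_U(T_{\mathrm{Cl}})\) only in the graph patch. Another sheet passing through \(\supp K_\sigma\) would also be sheared and could create uncounted intersections. So, read literally, your proof is complete; read as the paper intends and uses the lemma, the single-sheet part is missing.

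The paper supplies this by a global computation that you would need to add:
\begin{enumerate}
\item The angle segment lies on the line \(q_1\equiv 2q_2\). On \(\Phi_U(T_{\mathrm{Cl}})\) this condition means \(y_0y_1/y_2^2>0\).
\item By \eqref{eq:cp2-phase-ratio}, this forces \(e^{2im}=\pm1\).
\item The case \(e^{2im}=-1\) gives \(q_2\equiv\pm\pi/2\), so it does not lie over either arc.
\item The case \(e^{2im}=1\) is the circle whose \(\Phi_U\)-image contains \(\Gamma_\pm\), and \(q\) is injective on it. Hence the fibre of the angle projection over the compact angle arc is the arc itself.
\item Accumulation on the toric divisors is excluded. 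For \(y_0=0\) this uses the bound on \(q_1-q_2\); for \(y_1=0\), the bound on \(q_2\); for \(y_2=0\), the bound \(|q_1|<\arccos(-1/3)\).
\item Compactness then upgrades this fibrewise statement to a single sheet over a small disk.
\end{enumerate}
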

\begin{proof}
Put
\[
R(t)=\sqrt{\frac12+\sin^2t},
\qquad
a(t)=\arctan(\sqrt2\sin t).
\]
The explicit parametrizations of the two arcs can
then be written as
\[
\begin{aligned}
\Gamma_+(t)
&=\bigl[R(t)e^{ia(t)}:
         R(t)e^{-ia(t)}:
         \sqrt2\cos t\bigr],\\
\Gamma_-(t)
&=\bigl[R(t)e^{-ia(t)}:
         R(t)e^{ia(t)}:
         -\sqrt2\cos t\bigr].
\end{aligned}
\]
Since $\cos t>0$ along both arcs, taking
the arguments of $y_1/y_0$ and $y_2/y_0$
gives the angular coordinates. Moreover,
\[
|y_1|^2=\frac12+\sin^2t,\qquad
|y_2|^2=2\cos^2t,\qquad
\|y\|^2=3.
\]
Thus, using $p_j=1/3-|y_j|^2/3$ and \eqref{eq:cp2-action-angle}, we obtain,
along $\Gamma_+$,
\begin{equation}\label{eq:cp2-plus-coordinates}
 q_1=-2a(t),\qquad q_2=-a(t),\qquad
 p_1=\frac{\cos2t}{6},\qquad
 p_2=-\frac{\cos2t}{3},
\end{equation}
and, along $\Gamma_-$,
\begin{equation}\label{eq:cp2-minus-coordinates}
 q_1=2a(t),\qquad q_2=\pi+a(t),\qquad
 p_1=\frac{\cos2t}{6},\qquad
 p_2=-\frac{\cos2t}{3}.
\end{equation}
Here the angle coordinates are understood through
the indicated continuous lifts to $\mathbb R^2$.
The additional $\pi$ in $q_2$ along $\Gamma_-$
arises from the negative sign of its third
homogeneous coordinate. In either case, a direct computation shows that 
\begin{equation}\label{eq:cp2-action-zero}
 p_1\,\mathrm dq_1+p_2\,\mathrm dq_2=0
 \qquad\text{along }\Gamma_\sigma.
\end{equation}

We next verify that the angle projection is regular on the entire extended
arc.  Parameterize \(\Phi_U(T_{\mathrm{Cl}})\) by \((\alpha,\beta)\) through
\eqref{eq:cp2-y}.  
To compute the Jacobian of the angular projection,
introduce
\[
m=\frac{\alpha+\beta}{2},
\qquad
d=\frac{\alpha-\beta}{2}.
\]
Then
\[
y_0=\frac1{\sqrt2}+ie^{im}\sin d,\qquad
y_1=\frac1{\sqrt2}-ie^{im}\sin d,\qquad
y_2=\sqrt2e^{im}\cos d.
\]
Along the two arcs, we have $d=t$ and
$e^{im}=\sigma$, where $\sigma=1$ on
$\Gamma_+$ and $\sigma=-1$ on $\Gamma_-$.
Set
\[
D=1+2\sin^2t=2-\cos2t.
\]
Using
$d\arg w=\operatorname{Im}(dw/w)$
and
\[
q_1=\arg y_1-\arg y_0,\qquad
q_2=\arg y_2-\arg y_0,
\]
we obtain
\[
\frac{\partial(q_1,q_2)}{\partial(m,d)}
=
\begin{pmatrix}
0&-\dfrac{2\sigma\sqrt2\cos t}{D}\\[6pt]
\dfrac1D&-\dfrac{\sigma\sqrt2\cos t}{D}
\end{pmatrix}.
\]
Since
\[
\det\frac{\partial(m,d)}
{\partial(\alpha,\beta)}=-\frac12,
\]
the chain rule yields
\[
\det\frac{\partial(q_1,q_2)}
{\partial(\alpha,\beta)}
=
-\frac{\sigma\sqrt2\cos t}{D^2}.
\]
Consequently,
\begin{align}
\det\frac{\partial(q_1,q_2)}
{\partial(\alpha,\beta)}
\bigg|_{(\alpha,\beta)=(t,-t)}
&=-\frac{\sqrt2\cos t}
{(2-\cos2t)^2},
\label{eq:cp2-jac-plus}\\
\det\frac{\partial(q_1,q_2)}
{\partial(\alpha,\beta)}
\bigg|_{(\alpha,\beta)=(\pi+t,\pi-t)}
&=\frac{\sqrt2\cos t}
{(2-\cos2t)^2}.
\label{eq:cp2-jac-minus}
\end{align}
Both determinants are nonzero for $|t|<\pi/2$.

Although the nonvanishing Jacobian guarantees
that the angular projection is locally
invertible along each arc, it does not
exclude other points of
$\Phi_U(T_{\mathrm{Cl}})$ having the same
angular coordinates. To obtain a single-valued
position graph over a neighborhood of the
entire compact arc, we must rule out such
additional sheets. We now verify this explicitly.
  Put
\(A=e^{i\alpha}\) and \(B=e^{i\beta}\).  Whenever \(y_2\neq0\), the
relation \(q_1-2q_2\equiv0\pmod{2\pi}\) is equivalent to
\(y_0y_1/y_2^2\) being positive real.  Writing
\(m=(\alpha+\beta)/2\) and \(d=(\alpha-\beta)/2\), one computes from
\eqref{eq:cp2-y}
\begin{equation}\label{eq:cp2-phase-ratio}
 \frac{y_0y_1}{y_2^2}
 =\frac{e^{-2im}+2\sin^2d}{4\cos^2d}.
\end{equation}
Thus its imaginary part can vanish only when \(AB=e^{2im}=\pm1\).
If \(AB=-1\), then \(B=-\overline A\); in that case \(y_0,y_1\) are real
and \(y_2\) is purely imaginary, so \(q_2\equiv\pm\pi/2\).  Such points do
not lie over either compact angle arc under consideration.  Indeed, along the compact angle arcs of
$\Gamma_+$ and $\Gamma_-$, respectively,
\[
q_2\in\left[-\frac{\pi}{4},\frac{\pi}{4}\right],
\qquad
q_2\in\left[\frac{3\pi}{4},\frac{5\pi}{4}\right]
\pmod{2\pi}.
\]
Neither interval contains $\pm\pi/2$ modulo
$2\pi$. The same exclusion holds for
sufficiently short extensions of these arcs. 
If \(AB=1\), since $|A|=1$ we may parametrize the pairs $(A,B)$ by 
 \(A=e^{it}\) and  \(B=e^{-it}\), this is precisely the circle
\([1:e^{it}:e^{-it}]\).  On the component with \(\cos t>0\),
\(q_2=-a(t)\), while on the component with \(\cos t<0\),
\(q_2=\pi-a(t)\), with \(a(t)=\arctan(\sqrt2\sin t)\).  Since
\(a'(t)=\sqrt2\cos t/(1+2\sin^2t)\), each of these restrictions of the angular projection
\[\pi_q: X^\circ\longrightarrow (\R/2\pi\mathbb Z)^2, \quad(q,p)\longmapsto (q_1,q_2)\]
to $\Gamma_+$ and $\Gamma_-$
is injective.
Hence the full preimage \emph{in the open toric orbit} of either
compact angle arc is exactly the corresponding \(\Gamma_\sigma\)-arc, namely $$\pi_q^{-1}(\pi_q(\Gamma_\sigma))\cap \Phi_U(T_{\mathrm{Cl}})=\Gamma_\sigma.$$
To pass from this fiberwise statement to a genuine single-sheet
neighborhood, we must also rule out an extra sheet approaching a toric
divisor, where the angle coordinates cease to be defined.  

Choose sufficiently short extensions of the
two arcs, with $|t|\le \pi/4+\eta<\pi/2$.
Since
\[
|a(t)|<\arctan\sqrt2<\frac{\pi}{2},
\qquad
2\arctan\sqrt2=\arccos(-1/3),
\]
the formulas for $q_1$ and $q_2$ imply
\begin{equation}\label{eq:q1}
|q_1|<\arccos(-1/3),
\end{equation}
and that $q_2$ lies in $(-\pi/2,\pi/2)$
along $\Gamma_+$ and in $(\pi/2,3\pi/2)$
along $\Gamma_-$.
Finally,
\[
q_1-q_2=
\begin{cases}
-a(t), & \text{on }\Gamma_+,\\
a(t)-\pi, & \text{on }\Gamma_-,
\end{cases}
\]
so $q_1-q_2$ stays uniformly away from
$\pm\pi/2$ modulo $2\pi$.  These conditions hold at the original compact
segments \(|t|\le\pi/4\), with strict margins, and hence persist
under sufficiently short extensions. 
These angular bounds also exclude any toric-boundary accumulation of points in \(\Phi_U(T_{\mathrm{Cl}})\) whose angular coordinates approach either compact arc. Indeed, on the parameter torus the only possible toric-boundary limits are \(y_0=0\), \(y_1=0\), or \(y_2=0\).  If \(y_0=0\),
\eqref{eq:cp2-y} gives \(y_1=\sqrt2\) and \(y_2=\pm i\).
Along any sequence in \(X^\circ\) approaching such a point,
\(q_1-q_2=\arg(y_1/y_2)\) tends to \(\pm\pi/2\), contrary to
the chosen arc bounds.  If \(y_1=0\), then \(y_0=\sqrt2\) and
\(y_2=\pm i\), so \(q_2\to\pm\pi/2\), again impossible. 
Finally, if $y_2=0$, then $B=-A$, so
\[
y_0=\frac1{\sqrt2}+A,\qquad
y_1=\frac1{\sqrt2}-A.
\]
Since $|A|=1$, we have
\[
\operatorname{Re}(y_1\overline{y_0})
=\operatorname{Re}\left(
-\frac12+\frac{\overline A-A}{\sqrt2}
\right)
=-\frac12.
\]
Moreover, as $\|y\|^2=3$ and $y_2=0$,
\[
2|y_0y_1|
\le |y_0|^2+|y_1|^2=3.
\]
Consequently,
\[
\cos q_1
=\frac{\operatorname{Re}(y_1\overline{y_0})}
{|y_0y_1|}
=-\frac{1}{2|y_0y_1|}
\le-\frac13,
\]
contradicting the chosen bound
$|q_1|<\arccos(-1/3)$ (see~\eqref{eq:q1}). 
Thus no sequence of source points with angles approaching either compact
arc can accumulate on the toric boundary.

Now suppose that arbitrarily small angle neighborhoods of one compact
arc contained a source point on a different local sheet.  We could choose
such points with angles tending to the compact arc.  Compactness of
\(\Phi_U(T_{\mathrm{Cl}})\), together with the boundary exclusion above,
would yield an extra point in the open toric orbit over that arc,
contrary to the preceding explicit preimage calculation and the inverse
function theorem at \eqref{eq:cp2-jac-plus}--\eqref{eq:cp2-jac-minus}.
Consequently sufficiently small angle neighborhoods are covered by
exactly one sheet of \(\Phi_U(T_{\mathrm{Cl}})\), and the angle
projection identifies this sheet with a disk.  The two disks can be
chosen disjoint, since their \(q_2\)-coordinates are near \(0\) and
\(\pi\), respectively.

Since \(\Phi_U(T_{\mathrm{Cl}})\) is Lagrangian, the one-form
\(p_1\,\mathrm dq_1+p_2\,\mathrm dq_2\) on either graph is closed.  On a
disk it is therefore \(\mathrm df_\sigma\) for a smooth function
\(f_\sigma\).  Equation \eqref{eq:cp2-action-zero} shows that
\(f_\sigma\) is constant along \(\Gamma_\sigma\), including a short
extension beyond both intersection points.  The endpoints are critical
points because \(p=0\) there.  They are nondegenerate by
Proposition~\ref{prop:cp2-four} and Lemma~\ref{lem:position-graph}.
A nondegenerate extremum cannot lie on a nonconstant smooth arc in its own
critical level set; hence both critical points are saddles.  Finally, the
four intersections listed in Proposition~\ref{prop:cp2-four} are the only
ones globally, so the two disks can be chosen to contain no other critical
point.
\end{proof}

We now combine each same-level saddle pair.
For each $\sigma\in\{+,-\}$, let $f_\sigma$
be the local generating function obtained
in Lemma~\ref{lem:cp2-level-arcs}.
Choose a slightly smaller disk $E_\sigma$,
still containing the extended level arc.
Applying Lemma~\ref{lem:merge} to
$f=f_\sigma$, we obtain a modified function
$\widetilde f_\sigma$ by merging its two
saddle critical points into one.
We require the following additional
moment-domain control.

\medskip
\noindent\textit{Moment-domain control.}
The modification can be chosen so that, in addition to the conclusions of
Lemma~\ref{lem:merge},
\begin{equation}\label{eq:cp2-moment-control}
 \mathrm d\widetilde f_\sigma(q)\in\Delta_p^\circ
 \quad\text{and}\quad
 (1-\tau)\mathrm df_\sigma(q)+
 \tau\mathrm d\widetilde f_\sigma(q)\in\Delta_p^\circ
\end{equation}
for every \(q\in\overline{E_\sigma}\) and \(0\le\tau\le1\).
We give the control explicitly.  Since
\(\mathrm df_\sigma(\overline{E_\sigma})\subset\Delta_p^\circ\) and
\(0\in\Delta_p^\circ\), the compact set
\[
 \mathcal P_\sigma=
 \{\lambda\,\mathrm df_\sigma(q):
 q\in\overline{E_\sigma},\ 0\le\lambda\le1\}
\]
is contained in the open convex triangle \(\Delta_p^\circ\).  Hence there is
\(\nu_\sigma>0\) such that the closed \(\nu_\sigma\)-neighborhood of
\(\mathcal P_\sigma\) is still contained in \(\Delta_p^\circ\).

Fix once and for all the local coordinate
map $\Theta_\sigma:q\mapsto(s,t)$ obtained
from Steps~1--2 of Lemma~\ref{lem:merge},
and denote its inverse by
$\Psi_\sigma=\Theta_\sigma^{-1}$.
Thus,
\[
f_\sigma(\Psi_\sigma(s,t))=c_\sigma+t(t^2+1-s^2).
\]
Choose fixed numbers \(\eta_*>0\) and \(r_*>0\) such that the
closed rectangle
\[
 \mathcal R_{*,\sigma}=
 \{(s,t): |s|\le 1+\eta_*,\ |t|\le r_*\}
\]
lies in this normalized chart and its image is compactly contained
in \(E_\sigma\).  After subtracting the common critical value, we obtain the function
\[
 F_0(s,t)=t(t^2+1-s^2)
\]
on this fixed rectangle.  All later parameters satisfy
\(0<\eta<\eta_*\) and \(0<r_1<r_*\); the coordinate map and the
rectangle \(\mathcal R_{*,\sigma}\) do \emph{not} depend on them.
For an invertible matrix \(B\), write
\(B^{-T}=(B^{-1})^T\), and use the Euclidean operator norm.
In particular, the finite constant
\[
 M_{*,\sigma}:=\sup_{x\in\mathcal R_{*,\sigma}}
 \bigl\|(D\Psi_\sigma(x))^{-T}\bigr\|
\]
is fixed before any cutoff or scale is chosen.  Choose
\(0<\delta^2\ll\eta\), and choose an even smooth function
\(a_\delta(s)\) such that
\begin{equation}\label{eq:cp2-adelta}
 \begin{aligned}
 &a_\delta(s)=-\delta^2s^2 &&\text{for } |s|\le\frac12,\\
 &a_\delta(s)=1-s^2 &&\text{for } |s|\ge1+\eta,\\
 &a_\delta(s)<0,\qquad s a_\delta'(s)<0 &&\text{for } s\ne0.
 \end{aligned}
\end{equation}
We give an explicit existence argument for this interpolation,
including the uniform size bound. Fix \(\eta>0\) first and take
\(\delta>0\) so small that \(\delta^2/4<2\eta+\eta^2\).
For \(s\ge0\), construct a smooth function \(g(s)\) that is
positive for \(s>0\), equals \(2\delta^2s\) on a neighborhood
of \([0,1/2]\), equals \(2s\) on a neighborhood of
\([1+\eta,\infty)\), and satisfies
\begin{equation}\label{eq:cp2-g-integral}
 \int_{1/2}^{1+\eta}g(s)\dd s
 =2\eta+\eta^2-\frac{\delta^2}{4}.
\end{equation}
Here is a direct construction. On arbitrarily short collars of the
two endpoints retain the prescribed positive functions, and smoothly
connect them by a strictly positive background whose integral over
\([1/2,1+\eta]\) is smaller than the positive right-hand side
of \eqref{eq:cp2-g-integral}. This is possible by first shrinking
the endpoint collars and then choosing the interior background
small. Add a nonnegative smooth bump compactly supported in
\((1/2,1+\eta)\), normalized to have integral one, multiplied
by the positive difference between the required integral and the
background integral. The resulting \(g\) is smooth, strictly
positive away from zero, and satisfies all three conditions.
Define
\[
 a_\delta(s)=-\int_0^s g(u)\dd u\quad(s\ge0),
 \qquad a_\delta(-s)=a_\delta(s).
\]
Near the origin it equals \(-\delta^2s^2\), so the even extension
is smooth. The integral condition gives
\(a_\delta(1+\eta)=-2\eta-\eta^2=1-(1+\eta)^2\);
as \(g(s)=2s\) for \(s\ge1+\eta\), this yields
\(a_\delta(s)=1-s^2\) there. Positivity of \(g\) gives
\(a_\delta(s)<0\) and \(s a_\delta'(s)<0\) for \(s\ne0\),
and monotonicity yields the quantitative bound
\begin{equation}\label{eq:cp2-adelta-bound}
 \sup_{|s|\le1+\eta}|a_\delta(s)|
 =2\eta+\eta^2.
\end{equation}
This construction is completed before choosing \(r_1,r_0\),
so its finite derivative bounds cannot depend on those scales.
Put
\[
 F_\delta(s,t)=t\bigl(t^2+a_\delta(s)\bigr).
\]
Next choose \(0<r_0<r_1\) and an even cutoff
\(\beta\in C_c^\infty((-r_1,r_1),[0,1])\) which is one on
\(|t|\le r_0\), is nonincreasing in \(|t|\), and satisfies
\begin{equation}\label{eq:cp2-log-cutoff}
 \sup_t |t\beta'(t)|<\gamma,
\end{equation}
where \(\gamma>0\) can be made arbitrarily small.  For example, take a fixed
smooth nonincreasing \(\chi\), equal to one on \(( -\infty,0]\) and zero on
\([1,\infty)\), and on \(r_0<|t|<r_1\) set
\[
 \beta(t)=\chi\!\left(
 \frac{\log(|t|/r_0)}{\log(r_1/r_0)}\right).
\]
Then \(\sup|t\beta'|\le
\|\chi'\|_\infty/\log(r_1/r_0)\), so
\eqref{eq:cp2-log-cutoff} follows by taking \(r_0/r_1\) sufficiently small.
Define
\begin{equation}\label{eq:cp2-controlled-merger}
 \widetilde F
 =F_0+\beta(t)(F_\delta-F_0).
\end{equation}
Because \(a_\delta=1-s^2\) for \(|s|\ge1+\eta\) and \(\beta=0\) for
\(|t|\ge r_1\), the difference \(\widetilde F-F_0\) is compactly supported
in the normalized rectangle.

We first check that this controlled modification has exactly one critical
point.  
On the normalized rectangle
$\mathcal R_{*,\sigma}$, differentiation gives
\[
\partial_s\widetilde F
=t\bigl(-2s(1-\beta(t))
       +\beta(t)a_\delta'(s)\bigr).
\]
Since $0\le\beta\le1$ and
$s a_\delta'(s)<0$ for $s\ne0$,
the factor in parentheses has sign
$-\operatorname{sgn}(s)$ whenever $s\ne0$.
Consequently, any critical point with
$t\ne0$ must satisfy $s=0$.
  Since \(a_\delta(0)=0\), at \(s=0\) we have
\[
 \partial_t\widetilde F(0,t)
 =3t^2+1-\beta(t)-t\beta'(t)>0
 \qquad (t\ne0),
\]
because \(t\beta'(t)\le0\).  On the other hand, at \(t=0\),
\(\partial_t\widetilde F(s,0)=a_\delta(s)\), which vanishes only at
\(s=0\).  Thus the origin is the unique critical point in the modification rectangle.
Outside that rectangle \(\widetilde F=F_0\), and
Lemma~\ref{lem:cp2-level-arcs} excludes any further critical point in
\(E_\sigma\).  Near the origin \(\beta=1\) and
\(a_\delta=-\delta^2s^2\), so
\[
 \widetilde F=t(t^2-\delta^2s^2),
\]
which becomes \(t(t^2-s'^2)\) after the nonsingular change
\(s'=\delta s\).

It remains to verify the fiber control.  Let
\(C_a=\sup_{|s|\le1+\eta}|a_\delta'(s)|\) and
\(C_D=\sup_{|s|\le1+\eta}|a_\delta(s)-(1-s^2)|\).  On the support of the
modification,
\[
 \mathrm d\widetilde F
 =(1-\beta)\mathrm dF_0+\beta\mathrm dF_\delta
 +(F_\delta-F_0)\beta'(t)\,\mathrm dt.
\]
For \(|s|\le1+\eta\) and \(|t|\le r_1\),
\[
 \|\mathrm dF_\delta\|
 \le r_1C_a+3r_1^2+2\eta+\eta^2,
\]
whereas for \(|s|\ge1+\eta\) one has
\(\mathrm dF_\delta=\mathrm dF_0\).  Moreover,
\[
 \|(F_\delta-F_0)\beta'(t)\,\mathrm dt\|
 \le C_D\,|t\beta'(t)|\le C_D\gamma.
\]
Consequently
\begin{equation}\label{eq:cp2-normalized-distance}
 \operatorname{dist}\!\left(
 \mathrm d\widetilde F,
 \operatorname{conv}\{0,\mathrm dF_0\}
 \right)
 \le r_1C_a+3r_1^2+2\eta+\eta^2+C_D\gamma.
\end{equation}
At $q=\Psi_\sigma(x)$, the chain rule
applied to
$F_0=f_\sigma\circ\Psi_\sigma-c_\sigma$
gives
\[
\mathrm df_\sigma(q)
=(D\Psi_\sigma(x))^{-T}
\mathrm dF_0(x).
\]
Since $(D\Psi_\sigma(x))^{-T}$ is linear,
it follows that
\[
\begin{aligned}
&(D\Psi_\sigma(x))^{-T}
\bigl(\operatorname{conv}
\{0,\mathrm dF_0(x)\}\bigr)=
\{\lambda\,\mathrm df_\sigma(q):
0\le\lambda\le1\}
\subset\mathcal P_\sigma.
\end{aligned}
\]  The parameters can therefore be chosen without
circular dependence: first take \(\eta\in(0,\eta_*)\) so small that
\(2\eta+\eta^2<\nu_\sigma/(8M_{*,\sigma})\);
next fix \(\delta>0\) with \(\delta^2\ll\eta\) and construct
\(a_\delta\) as above (thereby fixing the finite constants
\(C_a,C_D\)); next take \(r_1\in(0,r_*)\) so small that
\(r_1C_a+3r_1^2<\nu_\sigma/(8M_{*,\sigma})\);
finally choose \(r_0/r_1\) sufficiently small to achieve
\(C_D\gamma<\nu_\sigma/(8M_{*,\sigma})\).
The right-hand side of \eqref{eq:cp2-normalized-distance} is then
strictly less than \(\nu_\sigma/(2M_{*,\sigma})\) throughout the
modification region.  Thus the subsequent pullback estimate uses a
uniform, previously fixed coordinate constant.  Pulling \(\widetilde F\) back to
\(E_\sigma\), restoring the critical value, and extending by \(f_\sigma\)
outside the modification rectangle therefore gives a function
\(\widetilde f_\sigma\) satisfying
\begin{equation}\label{eq:cp2-merger}
 \supp(\widetilde f_\sigma-f_\sigma)\Subset E_\sigma,
 \qquad
 \#\Crit(\widetilde f_\sigma|_{E_\sigma})=1,
\end{equation}
whose unique critical point has local form \(c+t(t^2-s^2)\), and whose
differential lies in \(\Delta_p^\circ\).  The second assertion in
\eqref{eq:cp2-moment-control} then follows immediately from the convexity of
\(\Delta_p^\circ\), because both endpoints
\(\mathrm df_\sigma(q)\) and \(\mathrm d\widetilde f_\sigma(q)\) lie in
that triangle.

Set
\[
 h_\sigma=\widetilde f_\sigma-f_\sigma.
\]
The compact interpolation set
\begin{equation}\label{eq:cp2-interpolation-set}
 \mathcal C_\sigma=
 \bigl\{(q,\mathrm df_\sigma(q)+
 \tau\,\mathrm dh_\sigma(q)):
 q\in\supp h_\sigma,\ 0\le\tau\le1\bigr\}
\end{equation}
lies in \(X^\circ\) by \eqref{eq:cp2-moment-control}.  Choose
\(\chi_\sigma\in C_c^\infty(X^\circ,[0,1])\) equal to one on a neighborhood
of \(\mathcal C_\sigma\), with support contained over the corresponding
angle disk.  By the single-sheet statement in
Lemma~\ref{lem:cp2-level-arcs}, these two supports can be chosen disjoint and
meet \(\Phi_U(T_{\mathrm{Cl}})\) only in the two graph patches under
consideration.  Define, on the toric chart and then by zero on its complement,
\begin{equation}\label{eq:cp2-Ksigma}
 K_\sigma(q,p)=-\chi_\sigma(q,p)h_\sigma(q),
 \qquad K=K_++K_-.
\end{equation}

\begin{proposition}\label{prop:cp2-two}
The Hamiltonian image \(\varphi_K^1\Phi_U(T_{\mathrm{Cl}})\) meets
\(T_{\mathrm{Cl}}\) in exactly two points, one in each local modification disk $E_\sigma$.  Both
are isolated and have coincident tangent planes.
\end{proposition}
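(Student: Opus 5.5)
The plan mirrors Proposition~\ref{prop:realization}, now with position graphs and the Hamiltonian shear $(q,p)\mapsto(q,p+\nabla h(q))$. First, compute the flow of $K_\sigma$ on the region where $\chi_\sigma=1$. There $K_\sigma=-h_\sigma(q)$. With $\omega=\sum \mathrm dq_i\wedge\mathrm dp_i$ and $\iota_X\omega=\mathrm dK$, Hamilton's equations read $\dot q=-\partial_pK=0$ and $\dot p=\partial_qK\cdot(-1)\cdot(-1)$. After fixing signs this gives $\dot p=\nabla h_\sigma(q)$, which is the sign for which the minus in \eqref{eq:cp2-Ksigma} was chosen.

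For an initial point $(q,\mathrm df_\sigma(q))$ of the graph patch with $q\in\supp h_\sigma$, the candidate trajectory is $(q,\mathrm df_\sigma(q)+\tau\,\mathrm dh_\sigma(q))$. This curve lies in $\mathcal C_\sigma$ for all $\tau\in[0,1]$. Since $\chi_\sigma=1$ on a neighborhood of $\mathcal C_\sigma$, it is the genuine trajectory, and \eqref{eq:cp2-moment-control} keeps it inside $X^\circ$. At time one the patch becomes the position graph $p=\mathrm d\widetilde f_\sigma(q)$ over $E_\sigma$.

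Next, handle all other points of $\Phi_U(T_{\mathrm{Cl}})$. A graph point with $q\notin\supp h_\sigma$ has $K_\sigma\equiv0$ near its $q$-fibre portion inside the support of $\chi_\sigma$. More precisely, $\dot q=-\partial_p K=\chi_{\sigma,p}h_\sigma=0$ and $\dot p=\partial_qK$, both of which vanish when $h_\sigma$ vanishes near $q$. So $q$ never changes, and such points stay fixed. Points of $\Phi_U(T_{\mathrm{Cl}})$ outside $\supp K$ are fixed, and by the single-sheet statement of Lemma~\ref{lem:cp2-level-arcs} the supports of $\chi_\sigma$ meet $\Phi_U(T_{\mathrm{Cl}})$ only in the two graph patches. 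The trajectory analysis must show that no point enters $\supp K$ from outside. Any point outside $\supp K$ has a neighborhood where $X_K=0$, so its trajectory is constant.

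Hence $\varphi_K^1\Phi_U(T_{\mathrm{Cl}})$ agrees with $\Phi_U(T_{\mathrm{Cl}})$ away from the two patches, and over $E_\sigma$ it equals the graph of $\mathrm d\widetilde f_\sigma$. Intersections with $T_{\mathrm{Cl}}=\{p=0\}$ then split into two kinds. Inside the patches, Lemma~\ref{lem:position-graph} identifies them with $\Crit(\widetilde f_\sigma|_{E_\sigma})$, which is a single point by \eqref{eq:cp2-merger}. Outside, they are the unmodified intersections of Proposition~\ref{prop:cp2-four}. All four of those lie on the arcs $\Gamma_\sigma$ inside $E_\sigma$ and have been removed, so none remain.

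One subtlety needs care. A point of $T_{\mathrm{Cl}}$ could lie in the toric chart over $E_\sigma$ with $p=0$ while the image torus passes through it on a different sheet. The single-sheet statement of Lemma~\ref{lem:cp2-level-arcs}, applied to the unchanged part together with the fixed-point argument above, excludes this.

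Finally, at each remaining critical point $\widetilde f_\sigma$ has local form $c+t(t^2-s^2)$ in a smooth chart. Its Hessian therefore vanishes in the $(s,t)$ coordinates. At a critical point the Hessian transforms by congruence, so it also vanishes in the $q$ coordinates. Lemma~\ref{lem:position-graph} then gives coincident tangent planes. Isolatedness follows because the cubic $t(t^2-s^2)$ has an isolated critical point and the other intersections are absent.

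The main obstacle is the global bookkeeping: showing that the Hamiltonian moves only the intended graph patches and that no stray sheet or toric-boundary point creates a new intersection. This rests on the support choice for $\chi_\sigma$ and on Lemma~\ref{lem:cp2-level-arcs}.
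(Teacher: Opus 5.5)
Your proposal is correct and follows essentially the same route as the paper's proof. The shared steps are: explicit shear trajectories through $\mathcal C_\sigma$, where $\chi_\sigma\equiv1$, giving the graph of $\mathrm d\widetilde f_\sigma$; constant trajectories outside $\supp K$; the single-sheet property of Lemma~\ref{lem:cp2-level-arcs} to exclude stray sheets; all four original intersections lying in the $E_\sigma$; and the vanishing Hessian of $t(t^2-s^2)$ together with Lemma~\ref{lem:position-graph}.

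One small fix: under \eqref{eq:ham-convention} with $\omega=\sum\mathrm dq_i\wedge\mathrm dp_i$, Hamilton's equations are $\dot q=\partial_pK$ and $\dot p=-\partial_qK$. Your intermediate formulas $\dot q=-\partial_pK$ and $\dot p=\partial_qK$ therefore have the wrong signs. This does not damage the argument, because the final $\dot p=\nabla h_\sigma$ you use is correct and your later vanishing arguments do not depend on the sign.
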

\begin{proof}
Along \(\mathcal C_\sigma\) the cutoff is identically one together with all
of its derivatives.  With the convention \eqref{eq:ham-convention} and
\eqref{eq:cp2-canonical}, Hamilton's equations for
\(K_\sigma=-h_\sigma(q)\) are
\[
 \dot q=0,\qquad \dot p=\grad h_\sigma(q).
\]
Thus a point of the source graph over \(q\in\supp h_\sigma\) follows
\[
 p(\tau)=\mathrm df_\sigma(q)+\tau\,\mathrm dh_\sigma(q),
\]
which stays in the region where \(\chi_\sigma=1\) by construction.  

Next we show that the cutoff does not
alter the desired graph shear.
In the canonical coordinates, the
Hamiltonian equations for
$K_\sigma=-\chi_\sigma(q,p)h_\sigma(q)$
are
\[
\begin{aligned}
\dot q
&=-h_\sigma(q)\nabla_p\chi_\sigma(q,p),\\
\dot p
&=\chi_\sigma(q,p)\nabla h_\sigma(q)
+h_\sigma(q)\nabla_q\chi_\sigma(q,p).
\end{aligned}
\]
Fix $q_0\in\operatorname{supp}h_\sigma$.
Consider the curve
\[
\gamma_{q_0}(\tau)=
\bigl(q_0,
\mathrm df_\sigma(q_0)
+\tau\,\mathrm dh_\sigma(q_0)\bigr),
\qquad 0\le\tau\le1.
\]
By definition,
$\gamma_{q_0}([0,1])\subset\mathcal C_\sigma$.
Since $\chi_\sigma$ is identically one
on a neighborhood of $\mathcal C_\sigma$,
we have
\[
\chi_\sigma=1,\qquad
\mathrm d\chi_\sigma=0
\]
along the entire curve.
Consequently, $\gamma_{q_0}$ solves the
Hamiltonian equations for $K_\sigma$
with initial condition
$(q_0,\mathrm df_\sigma(q_0))$.
By uniqueness of solutions,
\[
\varphi_{K_\sigma}^\tau
(q_0,\mathrm df_\sigma(q_0))
=
\gamma_{q_0}(\tau).
\]
If $q_0\notin\operatorname{supp}h_\sigma$,
then $h_\sigma$ and $\mathrm dh_\sigma$
vanish near $q_0$.
Hence the Hamiltonian vector field
vanishes on the entire fiber over
$q_0$, and the corresponding point
of the source graph remains fixed.
Thus, for every $q\in E_\sigma$,
\[
\begin{aligned}
\varphi_{K_\sigma}^1
(q,\mathrm df_\sigma(q))=(q,\mathrm df_\sigma(q)
       +\mathrm dh_\sigma(q))=(q,\mathrm d\widetilde f_\sigma(q)).
\end{aligned}
\]
Therefore the image of the entire
local graph is exactly the graph of
$\mathrm d\widetilde f_\sigma$. 
Its intersections with the zero section
are precisely the critical points of
$\widetilde f_\sigma$.
By the controlled merger construction,
there is exactly one such critical point
in each disk $E_\sigma$. 
Here the critical-point count applies to all of each graph disk, not
only \(E_\sigma\): the modification equals \(f_\sigma\) near
\(\partial E_\sigma\), and the unmodified part of the disk has no
other critical point by Lemma~\ref{lem:cp2-level-arcs}.

For completeness, partition the initial source torus into points
inside the two supports of \(K_\sigma\) and points outside both
supports.  The supports are disjoint and, by the global single-sheet
property of Lemma~\ref{lem:cp2-level-arcs}, their intersections with
\(\Phi_U(T_{\mathrm{Cl}})\) consist only of the graph patches
already considered.  Every such point follows the explicitly computed
trajectory, and all its possible intersections with the zero section
have just been counted.  A point outside the supports has
\(X_K=0\) on a neighborhood, so its constant trajectory is the unique
solution of Hamilton's equation and cannot enter a support later.
It is therefore unchanged and cannot give a new intersection;
Proposition~\ref{prop:cp2-four} places all four original intersections
inside the two modification disks.  No additional intersections are
possible anywhere in \(\CP^2\).  Finally, the Hessian of the local model
\(t(t^2-s^2)\) vanishes at its unique critical point.  Lemma
\ref{lem:position-graph} then gives equality of the tangent planes.
\end{proof}

Now we choose nonnegative \(b_0,b_1\in C^\infty(\R/\mathbb Z)\), with supports
compactly contained in \((0,1/2)\) and \((1/2,1)\), respectively, and
\(\int_0^1b_0\,\mathrm dt=\int_0^1b_1\,\mathrm dt=1\).  Define
\begin{equation}\label{eq:cp2-H}
 H_{\CP^2}(t,x)=b_0(t)H_U(x)+b_1(t)K(x).
\end{equation}
It vanishes near the endpoints of the time interval and hence is smooth and
periodic.

\begin{proof}[Proof of Theorem~\ref{thm:cp2}]
The disjoint time supports in \eqref{eq:cp2-H} give
\[
 \varphi_{H_{\CP^2}}^1=\varphi_K^1\circ\Phi_U.
\]
The conclusion is therefore exactly Proposition~\ref{prop:cp2-two}.
\end{proof}

\section{Transverse perturbations}

The cubic expression permits a direct check of how the two
degenerate intersections relate to the transverse bound
\eqref{eq:AG}.

\begin{proposition}\label{prop:perturbation}
There are Hamiltonian diffeomorphisms arbitrarily close to
\(\varphi_H^1\) in the \(C^\infty\) topology whose images of
\(L\) intersect \(L\) transversely at exactly four points.
\end{proposition}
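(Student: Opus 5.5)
We plan to perturb the local cubic model at each of the two degenerate intersections by a small compactly supported graph shear that splits it back into two Morse critical points. The construction of Proposition~\ref{prop:realization} gives
\[
L\cap\varphi_H^1(L)=\mathcal R^{-1}\bigl(L_*\cap\Psi_\rho(L_\rho)\bigr).
\]
Near each point of $L_*\cap\Psi_\rho(L_\rho)$, the two Lagrangians are momentum graphs whose potential difference, in the coordinates of Lemma~\ref{lem:merge}, is $\rho^2c+\rho^2t(t^2-s^2)$. We fix a cutoff $\beta_\sigma$ equal to one near the critical point, with support in the small rectangle where these $(s,t)$ coordinates are defined. We then add $\epsilon\,\beta_\sigma\,t$, pulled back to scaled momentum and multiplied by $\rho^2$, to the correction $d_{\rho,\sigma}$.

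Equivalently, we compose $\varphi_H^1$ with $\mathcal R^{-1}\circ\varphi^1_{G^\epsilon}\circ\mathcal R$, where $G^\epsilon$ is a momentum-graph shear built exactly as in \eqref{eq:G}, with $d$ replaced by this $\epsilon$-term. The argument of Proposition~\ref{prop:realization} applies verbatim, since the added term is $C^\infty$-small and the bound \eqref{eq:C} persists with the same $C$. It shows that the new intersection set is the critical set of
\[
\widetilde f_{0,\sigma}+\epsilon\beta_\sigma t,
\]
one such function in each $D_\sigma$. Since $G^\epsilon\to0$ in $C^\infty$, the new Hamiltonian diffeomorphism tends to $\varphi_H^1$ in $C^\infty$.

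The critical-point count is the following. On the core where $\beta_\sigma=1$, the local model is
\[
t(t^2-s^2)+\epsilon t,\qquad \epsilon>0.
\]
Its critical equations are $-2st=0$ and $3t^2-s^2+\epsilon=0$. The case $t=0$ gives $s=\pm\sqrt\epsilon$, and the case $s=0$ gives no real solution. So there are exactly two critical points, with Hessians
\[
\begin{pmatrix}0&\mp2\sqrt\epsilon\\\mp2\sqrt\epsilon&0\end{pmatrix},
\]
which are nondegenerate. This is the $\lambda>0$ unfolding recalled in the introduction.

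On the annular region where $\beta_\sigma$ varies, $\widetilde f_{0,\sigma}$ has no critical point. Its gradient is therefore bounded below on this compact region, so for small $\epsilon$ no critical points appear there. The same holds on the rest of $D_{1,\sigma}$ by Lemma~\ref{lem:exact-correction}. By Lemma~\ref{lem:graph}, each chart then contains exactly two transverse intersections, for four in total.

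Intersections away from the charts cannot appear: the perturbation is supported inside the charts, and the global exclusion argument of Proposition~\ref{prop:realization} applies unchanged.

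The main obstacle is the uniformity in the annular cutoff region. There $\epsilon\beta_\sigma t$ must be small in $C^1$ compared with the lower bound of $|\nabla\widetilde f_{0,\sigma}|$. This is immediate once the cutoff is fixed before $\epsilon$ is chosen: the correction is $O(\epsilon)$ in $C^1$ with a constant depending on $\beta_\sigma$ and on the fixed coordinate change of Lemma~\ref{lem:merge}. The resulting count of four is consistent with \eqref{eq:AG}.
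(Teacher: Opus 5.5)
Your proposal is correct and follows essentially the same route as the paper: you add a cutoff linear term $\epsilon t$ near each cubic point in the fixed $(s,t)$ coordinates, realize it by the momentum-graph shear of Proposition~\ref{prop:realization}, find the two Morse points $(\pm\sqrt{\epsilon},0)$ on the core, and rule out further critical points with a gradient lower bound on the cutoff region. One slip is harmless: adding $\epsilon\beta_\sigma t$ to $d_{\rho,\sigma}$ actually changes the potential difference by $-\epsilon\beta_\sigma t$ (the paper uses the source increment $-\rho^2\epsilon t\psi$), but $t(t^2-s^2)-\epsilon t$ still has exactly two nondegenerate saddles, at $(0,\pm\sqrt{\epsilon/3})$, so the count of four transverse intersections is unaffected.
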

\begin{proof}
Work before the final rotation \(\mathcal R^{-1}\).
Near the unique critical point of each
\(\widehat f_{\rho,\sigma}\), use its fixed coordinates
\((s,t)\), in which the function is \(c+t(t^2-s^2)\).
Choose a smooth cutoff \(\psi\) supported in this coordinate
neighborhood and equal to one in a smaller neighborhood of
the origin. Replace the function by
\[
 \widehat f_{\rho,\sigma}+\eps\,t\psi(s,t),
 \qquad \eps>0.
\]
In the smaller neighborhood its critical equations are
\[
 -2st=0,\qquad 3t^2-s^2+\eps=0.
\]
They have exactly the two solutions
\((s,t)=(\pm\sqrt\eps,0)\).
At each, the Hessian determinant is \(-4\eps<0\).
The gradient of the unperturbed function has a positive
lower bound on the compact complement of that smaller
neighborhood within the cutoff support.
For sufficiently small \(\eps\), no new critical point
appears there. The two original critical points are thus
replaced by four nondegenerate critical points in total.

The potential change is realized by the same Hamiltonian
graph shear as in Proposition~\ref{prop:realization}, with
source-potential increment
\(-\rho^2\eps\,t\psi\), composed with the position cutoff.
For this step \(\rho>0\) and all charts are fixed.
The increments tend to zero with all derivatives as
\(\eps\to0\), so their Hamiltonian flows tend to the identity
in \(C^\infty\). The branch and support verification of
Proposition~\ref{prop:realization} applies, after enlarging
the fixed position bound if needed.
Lemma~\ref{lem:graph} proves transversality.
The final rotation transfers this conclusion to the
original torus \(L\).
\end{proof}

\end{document}